\numberwithin{equation}{section}
\newtheorem{theorem}{Theorem}[section]
\newtheorem{lemma}[theorem]{Lemma}
\newtheorem{corollary}[theorem]{Corollary}
\newtheorem{proposition}[theorem]{Proposition}
\newtheorem*{maintheorem}{Main Theorem}
\newtheorem{remark}[theorem]{Remark}
\theoremstyle{definition}
\newtheorem{example}[theorem]{Example}
\newtheorem{definition}[theorem]{Definition}
\newcommand{\Lip}{\operatorname{Lip}}
\newcommand{\dist}{\operatorname{dist}}
\newcommand{\dvg}{\operatorname{div}}
\newcommand{\Osc}{\operatorname{Osc}}
\newcommand{\supp}{\operatorname{supp}}
\tikzstyle{idea} = [rectangle, rounded corners, minimum width=2cm, minimum height=1cm, text centered, draw=black, align=center]
\tikzstyle{process} = [rectangle, minimum width=3cm, minimum height=1cm, text centered, draw=black, align=center]
\tikzstyle{point} = [coordinate, on grid]
\tikzstyle{arrow} = [thick,->,>=stealth]
\tikzstyle{dasharrow} = [dashed,->,>=stealth]
\title[Local LDP for Wave Equations]{\Large L{\MakeLowercase{ocal Large Deviations for Randomly Forced\\
Nonlinear Wave Equations with Localized Damping}}}
\author[Y. Chen, Z. Liu, S. Xiang, Z. Zhang]{Y\MakeLowercase{uxuan} C\MakeLowercase{hen}, Z\MakeLowercase{iyu} L\MakeLowercase{iu}, S\MakeLowercase{hengquan} X\MakeLowercase{iang}, Z\MakeLowercase{hifei} Z\MakeLowercase{hang}}
\address[Yuxuan Chen]{School of Mathematical Sciences, Peking University, 100871, Beijing, China.}
\email{chen\_yuxuan@pku.edu.cn}
\address[Ziyu Liu]{School of Mathematical Sciences, Peking University, 100871, Beijing, China.}
\email{liuziyu@math.pku.edu.cn}
\address[Shengquan  Xiang]{School of Mathematical Sciences, Peking University, 100871, Beijing, China.}
\email{shengquan.xiang@math.pku.edu.cn}
\address[Zhifei  Zhang]{School of Mathematical Sciences, Peking University, 100871, Beijing, China.}
\email{zfzhang@math.pku.edu.cn}
\begin{document}

        \subjclass[2020]{37L50, 
        35L71,  
        60B12,  
        60F10.  
        }
	
        \keywords{Large deviation principle; Nonlinear wave equations; Asymptotic compactness; Asymptotic exponential tightness; Empirical distributions}

    \vspace{5mm}
	\begin{abstract}
        We study the large deviation principle (LDP) for locally damped nonlinear wave equations perturbed by a bounded noise. When the noise is sufficiently non-degenerate, we establish the LDP for empirical distributions with lower bound of a local type. The primary challenge is the lack of compactness due to the absence of smoothing effect. This is overcome by exploiting the  asymptotic compactness for the dynamics of waves, introducing the concept of asymptotic exponential tightness for random measures, and establishing a new LDP approach for random dynamical systems.
    \end{abstract}
 
    \maketitle

    \setcounter{tocdepth}{2}
	\tableofcontents
    \section{Introduction}
    
    The {\it large deviation principle} (LDP) is an important topic in mathematics and physics. Many scientific inquiries, such as those in statistical mechanics, dynamical systems, and applied probability, are intricately connected with this theory; see monographs \cite{Ell-85, DS89, DZ10}. The LDP evaluates small probabilities of rare events on the exponential scale. Roughly speaking, a sequence of probability measures $(\mu_n)_{n\in \mathbb{N}}$ satisfies the LDP with rate function $I$, if    
    \[\mu_n(dx)\approx e^{-nI(x)}\, dx\quad \text{as }n\to \infty.\]
    
    \vspace{2cm}
    The motivation of this paper is twofold:
    
    \begin{enumerate}
        \item To extend the study on statistical behaviours of randomly forced dispersive PDEs, building upon the prior paper \cite{LWXZZ24}. The highlight is the local LDP for empirical distributions of weakly dissipative nonlinear wave equations. 
        
        \item To explore sharp sufficient conditions for LDP, specifically by advancing from exponential tightness to an asymptotic formulation. This new concept is a consequence of asymptotic compactness, from the dynamics of wave equations.
    \end{enumerate}

    \subsection{Main result}
    The nonlinear wave equation under consideration is formulated as follows:
     \begin{equation}
        \begin{cases}
            \boxempty u+a(x)\partial_t u+u^3=\eta(t,x),\quad x\in D,\\
            u|_{\partial D}=0,\\
            u[0]=(u_0,u_1)=:\boldsymbol{u},
        \end{cases}\label{waveequation}
    \end{equation} 
    where $D$ is a smooth bounded domain in $\mathbb{R}^3$, with boundary $\partial D$ and outer normal vector $n(x)$, the notation $\boxempty:=\partial_{tt}^2-\Delta$ denotes the d'Alembert operator, and  $u[t]:=(u,\partial_t u)(t)$. Our settings for the damping coefficient $a(x)$ and random noise $\eta(t,x)$ are stated in $\mathbf{(S1)}$ and $\mathbf{(S2)}$ below.

    The phase space of equation \eqref{waveequation} is the energy space $\mathcal H:=H_0^1(D)\times L^2(D)$. 
    Let $(\lambda_j)_{j\in\mathbb{N}}$ be the eigenvalues of $-\Delta$, arranged in the increasing order with multiplicity counted. The eigenvectors corresponding to $\lambda_j$ are denoted by $e_j$, which form an orthonormal basis of $L^2(D)$. The sequence $(\alpha_k)_{k\in\mathbb{N}}$ denotes a smooth orthonormal basis of $L^2(0,1)$, which induces an orthonormal basis of $L^2(0,T)$ by taking  $\alpha_k^{\scriptscriptstyle T}(t)=\frac{1}{\sqrt{T}}\alpha_k(\frac{t}{T})$.

    \vspace{3mm}

    Below is to introduce the notion of $\Gamma$-type domain, which is initially used by Lions \cite{Lions-88}.  
    \begin{definition}\label{Def-Gamma}
    A $\Gamma$-type domain is a subdomain of $D$ in the form
    $$
    N_\delta(x_0):=\{
    x\in D:|x-y|<\delta\  \text{ for some }y\in\Gamma(x_0)\},
    $$
    where $x_0\in\mathbb R^3\setminus \overline D,\,\delta>0$ and $\Gamma(x_0)=\{x\in\partial D:(x-x_0)\cdot n(x)>0\}$. 
    \end{definition}

    This geometric setting is involved both in the localization of $a(x)$ and the structure of $\eta(t,x)$:

    \begin{enumerate}
    \item[$\mathbf{(S1)}$] ({\bf Localized structure}) {\it The function $a(\cdot)\in C^\infty(\overline{D})$ is non-negative, and there exists a $\Gamma$-type domain $N_\delta(x_0)$ and a constant $a_0>0$ such that
    \begin{equation*}
    a(x)\geq a_0,\quad \forall\,x\in N_\delta(x_0).
    \end{equation*}
    Meanwhile, let $\chi(\cdot)\in C^\infty(\overline{D})$ satisfy that there exists a $\Gamma$-type domain $N_{\delta'}(x_1)$ and a constant $\chi_0>0$ such that
    \begin{equation*}
    \chi(x)\geq \chi_0,\quad\forall\,x\in  N_{\delta'}(x_1).
    \end{equation*}
    }
    \end{enumerate}

   \noindent In particular, $\color{black}\mathbf{(S1)}$ would determine an intrinsic quantity $\mathbf{T}= \mathbf{T}(D, a, \chi)>0$, serving as a lower bound for time spread of the noise $\eta(t,x)$; see  \cite[Section 6]{LWXZZ24} for a rigorous formulation. 
   
    \vspace{1mm}
    
    \begin{enumerate}
    \item[$\mathbf{(S2)}$]  
    {\it 
    Let $\boldsymbol{\rho}=\{\rho_{jk}:j,k\in\mathbb{N}\}$ be a sequence of probability density functions supported by $[-1,1]$, which is $C^1$ and satisfies $\rho_{jk}(0)>0$. }
    \end{enumerate}

      \vspace{3mm}
    Given any $T>0$ and $\{b_{jk}:j,k\in\mathbb{N}\}$, a sequence of non-negative numbers,  the random noise $\eta(t,x)$ is specified as
    \begin{equation}\label{noise-structure1}
    \begin{aligned}
    &\eta(t,x)= \eta_n(t-nT,x)\quad \text{for }t\in (nT,(n+1)T)\text{ and }n\in\mathbb{N}_0,\\ 
    &\displaystyle\eta_n(t,x)=\chi(x)\sum_{j,k\in\mathbb{N}}b_{jk}\theta^{n}_{jk}\alpha^{\scriptscriptstyle T}_k(t)e_j(x)\quad \text{for }t\in (0,T),
    \end{aligned}
    \end{equation}
    where $\theta_{jk}^{n}$ are independent random variables such that $(\theta_{jk}^{n})_{n\in\mathbb{N}_0}$ has a common density $\rho_{jk}$.

    \vspace{3mm}

    Under the above settings, the solution $u[\cdot]$ at times $nT$ for $n\in\mathbb{N}_0$, namely $\boldsymbol{u}_n:=u[nT]$, naturally form a Markov process. Accordingly, our research regarding the evolution of random process $\boldsymbol{u}_n$ starts with the following two questions: 
    \vspace{1mm}
    \begin{enumerate}
        \item {\it Does the process $\boldsymbol{u}_n$ converge to a unique equilibrium distribution? }
        \vspace{1mm}
        \item {\it If so, how to describe the fluctuation from equilibrium in terms of LDP? }
    \end{enumerate}
    \vspace{1mm}
    It should be mentioned that both problems have been extensively studied for parabolic systems, while few results are available for hyperbolic equations. 

    \vspace{3mm}
    Recently, the first question has been answered affirmatively in \cite[Theorem B]{LWXZZ24}  for the random nonlinear wave equation \eqref{waveequation}. More precisely, under the assumptions as in the \hyperlink{MT}{\color{black}Main Theorem} below, the random process $(\boldsymbol{u}_n)_{n\in\mathbb{N}_0}$ is {\it exponential mixing} in the following sense:  There exists a unique invariant probability measure $\mu_*$ on $\mathcal{H}$, such that  
    \begin{equation*}
        \|\mathscr{D}(\boldsymbol{u}_n)-\mu_*\|_L^* \le C(1+E(\boldsymbol{u})) e^{-\gamma n}\quad\text{for any } \boldsymbol{u}\in\mathcal{H}\text{ and }n\in\mathbb{N}_0,
    \end{equation*}
    where $C,\gamma>0$ are constants, $\|\cdot\|_L^*$ is the dual-Lipschitz distance, $\mathscr{D}(\boldsymbol{u}_n)$ is the law of $\boldsymbol{u}_n$, and $E:\mathcal H\rightarrow [0,\infty)$ denotes the energy functional defined by
     \begin{equation}
        E(\boldsymbol{u}):=\int_D \frac{1}{2}|\nabla u_0|^2+ \frac{1}{2}|u_1|^2+\frac{1}{4} |u_0|^4\quad \text{for }\boldsymbol{u}=(u_0,u_1).\label{energy}
    \end{equation}

    \vspace{3mm}

   The main result of this paper concerning the second question is contained as follows, establishing a local form of large deviations for  $\boldsymbol{u}_n$. Recall that a rate function refers to a lower semicontinuous function $I$ taking values in $[0,\infty]$, and $I$ is good if its level set $\{I\le a\}$ is compact for any $a\in [0,\infty)$; see, e.g.~\cite{DZ10}.

    \begin{maintheorem}
   \hypertarget{MT}{Suppose} that $a(x),\,\chi(x),\,\boldsymbol{\rho}$ satisfy settings $\mathbf{(S1)}$ and $\mathbf{(S2)}$. Given any $T>\mathbf{T}(D,a,\chi)$ and ${B}_0>0$, there exists a constant $N=N(T,B_0)\in\mathbb{N}$ such that if the sequence $\{b_{jk}:j,k\in\mathbb{N}\}$ in {\rm(\ref{noise-structure1})} satisfies
    \begin{equation}\label{noise-structure2}
    \sum_{j,k\in\mathbb{N}}b_{jk}\lambda_j^{2/7}\|\alpha_k\|_{_{L^\infty(0,1)}} \leq {B}_0T^{1/2}\quad\text{and}\quad  b_{jk}\neq0\ \text{ for } 1\leq j,k\leq N.
    \end{equation}
    Then for any bounded Lipschitz function $f\colon \mathcal{H}\to \mathbb{R}$ with $f(0)\not =\langle f,\mu_*\rangle$, there exists a good rate function $\mathcal{I}^f\colon\mathbb{R}\to [0,\infty]$ and $\varepsilon>0$, such that
    \begin{equation}
            \lim_{n\to \infty} \frac{1}{n}\log \mathbb{P}_{\boldsymbol{u}} \left(\frac{1}{n}\sum_{k=1}^{n}f(\boldsymbol{u}_k)\in O\right)=-\inf \{\mathcal{I}^f(p):p\in O\}\label{def_level1ldp}
        \end{equation}
        for any $\boldsymbol{u}\in \mathcal{H}$ and open set $O\subset [\langle f,\mu_*\rangle-\varepsilon,\langle f,\mu_*\rangle+\varepsilon]$, where $\langle f,\mu_*\rangle:=\int_{\mathcal{H}}f\, d\mu_*$. In addition, the convergence in \eqref{def_level1ldp} holds locally uniformly with respect to $\boldsymbol{u}\in \mathcal{H}$.
    \end{maintheorem}

    \subsection{Strategy and ingredients}
    In order to prove the \hyperlink{MT}{\color{black}Main Theorem}, we establish a new random dynamical system (RDS) approach. We believe this RDS approach is applicable to the LDP theory for a wide range of dispersive equations, encompassing Schr\"odinger and KdV equations. The lack of compactness represents the major challenge in various aspects. In the context of parabolic systems, the solutions gain extra regularity along trajectories, and enter a compact subset of the original phase space. Nevertheless, for hyperbolic equations the regularity of solutions is preserved. To overcome this difficulty, we put forward a three-step strategy, as illustrated in Figure~\ref{fig1}.

\setlength{\abovecaptionskip}{0cm}
    
    \begin{figure}[H]
\centering
\begin{tikzpicture}[node distance=2cm]
\footnotesize {
\node (Kifer) [process] {\hyperlink{AET}{$\color{black}\mathbf{(AET)}$}-based\\
Kifer's Criterion} 
node [below of = Kifer, yshift=32] {Theorem~\ref{Thm Kifer}};
\node (Model) [process, right of=Kifer, xshift=3cm] {
\hyperlink{AC}{$\color{black}\mathbf{(AC)}$}-based\\
RDS Approach}
node [below of = Model, yshift=32] {Theorem~\ref{Thm LDP}};
\node (Main Theorem) [process, right of=Model, xshift=3cm] {Nonlinear Wave\\
Local LDP}
node [below of = Main Theorem, yshift=32] {\hyperlink{MT}{\color{black}Main Theorem}};

\node [below of = Model, yshift=12] {\scriptsize{\hyperlink{AET}{$\color{black}\mathbf{(AET)}$}=asymptotic exponential tightness\qquad\qquad \hyperlink{AC}{$\color{black}\mathbf{(AC)}$}=asymptotic compactness}};

\draw [arrow] (Kifer) -- (Model) node[above,midway] {Sec.~\ref{subsec_prooflevel2LDP}};
\draw [arrow] (Model) -- ({Main Theorem}) node[above,midway] {Sec.~\ref{subsec_maintheorem}};}
\end{tikzpicture}
\caption{}
\label{fig1}
\end{figure}

    \vspace{-0.3cm}

    We briefly summarize the new ingredients in each stage as follows:

    \begin{enumerate}
        \item[(1)] Kifer-type criteria are sufficient conditions for the LDP of random probability measures. In \cite{Kifer-90,JNPS-18}, exponential tightness plays a crucial role for such criteria, which requires the measures to concentrate on compact sets. To get rid of the reliance on compactness, we introduce the notion of {\bf asymptotic exponential tightness} and establish a novel criterion of this type.
        
        \item[(2)] The lack of compactness also causes obstacles to the LDP for RDS model. In compensation, we exploit {\bf asymptotic compactness} from the dynamical point of view and provide a new RDS model that guarantees the local LDP.
      
        \item[(3)] Other difficulties occur from wave equations, also related to the loss of smoothing effect:
        
        \begin{enumerate}
            \item[(i)] Both the damping and noises are localized in space. We invoke multiplier methods, while standard energy methods are no longer applicable.

            \item[(ii)] Sobolev-critical cubic nonlinearity is tackled, benefiting from the control theory in place of the Foias--Prodi type estimates. 
        \end{enumerate}
    \end{enumerate}

    \vspace{3mm}
   Below is a more detailed explanation of each stage. 
  
  The RDS approach is inspired by \cite{Kifer-90,JNPS-15,LWXZZ24}. Our new criterion reveals that the following three hypotheses lead to local LDP. For the precise definitions, see Section~\ref{subsec_mixing}.

    \begin{itemize}
        \item [\tiny$\bullet$] {\bf Asymptotic compactness.} { A compact set $Y\subset \mathcal{H}$ attracts the sequence $\boldsymbol{u}_n$. }
        
        \item[\tiny$\bullet$] {\bf Irreducibility.} { A specific point $z\in Y$ is accessible from $Y$. }
         
        \item[\tiny$\bullet$] {\bf Coupling condition.} { Two processes issuing from $Y$ are possible to become closer.}
    \end{itemize}
    A new ingredient in this approach is asymptotic compactness, which has been coined recently in \cite{LWXZZ24} for mixing of wave equations. The merit of asymptotic compactness is that, the process tends to the compact set $Y$ without necessarily entering it. Meanwhile, irreducibility and coupling condition have been exploited in \cite{JNPS-15,JNPS-18} for the study of LDP for parabolic systems. Note that the latter two are typical hypotheses for mixing, e.g.~\cite{EM-01,HM-06,HM-08,Shi-08,KS-12,Shi-15}.

    \vspace{-1.5mm}
    
    \begin{remark}
    Heuristically, by virtue of mixing, the process tends to $\supp(\mu_*)$ with high probability, which suggests asymptotic compactness to be a sharp condition for the LDP.
    \end{remark}

    Kifer's large-deviation criterion for random probability measures has been exploited in \cite{JNPS-15,JNPS-18} to derive the LDP for parabolic systems. Based on the novel idea of asymptotic exponential tightness, we will set up a variant of this criterion. This together with aforementioned three hypotheses leads to the local LDP for RDS model. To briefly compare this concept with the standard notion of exponential tightness (see also Remark~\ref{Rmk_AETvsET}), recall a sequence $\mu_n\in \mathcal{P}(\mathcal{X})$ is exponentially tight, if for any $l>0$, there exists a compact set $K_l\subset \mathcal{X}$, such that    
    \[\limsup_{n\to \infty} \frac{1}{n}\log \mu_n(K_l^C)\le -l.\]
    Meanwhile, asymptotic exponential tightness asks for a weaker relation:
    \[\limsup_{n\to \infty} \frac{1}{n}\log \mu_n(B_{\mathcal{P(X)}}(K_l,r)^C)\le -l\quad \text{for any }r>0.\]
    The precise definition is provided in \eqref{AET}. In our RDS setting, asymptotic exponential tightness for empirical distributions is a consequence of the asymptotic compactness hypothesis.

    \vspace{3mm}
    A large portion of \cite{LWXZZ24} has been devoted to the verification that, under the assumptions of the \hyperlink{MT}{\color{black}Main Theorem}, the process $\boldsymbol{u}_n$ fits into the RDS setting. Indeed, the study of LDP is divided into several interconnected core topics in analysis; see also \cite[Section 1.3]{LWXZZ24}. Asymptotic compactness corresponds to attractors in dynamical systems and the dynamics of the evolution. Although $\boldsymbol{u}_n$ merely stays in $\mathcal{H}$, there exists a compact set $Y$ such that 
    \[\dist_{\mathcal{H}}(\boldsymbol{u}_n,Y)\le C(1+E(\boldsymbol{u})) e^{-\kappa n}.\]
    Irreducibility pertains to the global stability (or dissipation) of the system. The damping effect implies exponential decay once the noise vanishes. Coupling condition is associated with the stabilization from control theory.  Therefore, our approach is applicable to wave equations.

    \vspace{3mm}
    
    We end the introduction with a brief historical note on the LDP theory and randomly forced PDEs. The foundation of the modern theory on the large deviations for Markov processes is built by Donsker and Varadhan \cite{DV-75}. Then Kifer \cite{Kifer-90} establishes a general LDP criterion for random probability measures. These results mainly apply to compact metric spaces. In the context of non-compact spaces, the concept of exponential tightness is captured by Deuschel and Stroock \cite{DS89} to improve the LDP upper bound from compact sets to closed sets. Nevertheless, in certain scenarios, this condition is difficult or even impossible to verify, particularly when  investigating the LDP for hyperbolic equations, as previously explained.
    
   The Donsker--Varadhan type LDP for random PDEs has been studied in the past two decades. The first results for the  Navier--Stokes and Burgers equations with white-in-time forces are established by Gourcy \cite{Gou07-NS,Gou07-Burgers}, where the proofs are based on a general sufficient condition established by Wu \cite{Wu01}.  Jakši\'{c}, Nersesyan, Pillet and Shirikyan \cite{JNPS-15,JNPS-18} derive the LDP for a family of dissipative PDEs including the Navier--Stokes system, perturbed by non-degenerate random kick forces. The proofs are based on Kifer-type criteria and the long-time behaviour of Feymann--Kac semigroups. Nersesyan, Peng and Xu \cite{NPX-23} recently establish the LDP for the Navier--Stokes equations driven by a degenerate white-in-time noise. In \cite{JNPS-21}, a controllability method is used to study the LDP for Lagrangian trajectories of the Navier--Stokes system.

    In contrast, research on hyperbolic equations is almost a vacuum. To the best of our knowledge, the only existing literature on this topic is \cite{MN-18} by Martirosyan and Nersesyan, which treats the local LDP for damped wave equations with Sobolev-subcritical nonlinearity perturbed by a white noise, under the additional regularity assumption that $\boldsymbol{u}\in \mathcal{H}^s$ for some $s>0$.
    
    Finally, we also refer the reader to the literature, e.g.~\cite{FW-84,CR04,SS06,CM10,BCF15,Mar17} for various results of the Freidlin--Wentzell type LDP, and \cite{Bourgain-96,BT-08,GOT-22,BDNY-24,GKO-24} for other related topics in random dispersive equations.

    \vspace{3mm}
    
    This paper is organized as follows. In Section~\ref{sec_model} we first present the RDS setting and results on the local LDP, and then turn to the proof of the \hyperlink{MT}{\color{black}Main Theorem} on wave equations. Two more applications are provided: one indicates that the localness can be intrinsic, and the other, the Navier--Stokes system, suggests how the localness can be removed with further properties. Section~\ref{sec_proofofabstractldp} is devoted to the proof of the LDP in our abstract setting. A new feature in the proof is a variant of Kifer's criterion based on asymptotic exponential tightness. Finally, we exhibit some auxiliary results and proofs in the Appendix.

    \vspace{3mm}
    \noindent{\bf Notation.} 
    Let $(X,d)$ be a Polish space, i.e.~complete separable metric space, whose Borel $\sigma$-algebra is $\mathcal{B}(X)$. The interior and closure of $B\in \mathcal{B}(X)$ are written as $B^\circ$ and $\overline{B}$, respectively. We denote by $B_X(x,r)$ the open ball in $X$ centered at $x\in X$ with radius $r>0$. For $A\subset X$, the same notation represents the $r$-neighbourhood of $A$:
        \[B_X(A,r):=\{x\in X:\dist_X(x,A)<r\},\]
    where $\dist_X(x,A)$ stands for the distance from $x$ to $A$.

    By $C_b(X)$, we denote the Banach space of all bounded continuous functions on $X$, equipped with the supremum norm $\|\cdot\|_{\infty}$. For $f\in C_b(X)$,  its oscillation is denoted by $\Osc(f)$. $L_b(X)$ stands for the subspace of bounded Lipschitz functions. The Lipschitz norm of $f\in L_b(X)$ is $\|f\|_{L}:=\|f\|_{\infty}+ \Lip(f)$, where $\Lip(f):= \sup_{x\neq y}\frac{|f(x)-f(y)|}{d(x,y)}$. When $X$ is compact, we shall omit the subscripts for $C_b(X)$ and $L_b(X)$, and simply write $C(X)$ and $L(X)$, respectively.

    The space of finite signed Borel measures is denoted by $\mathcal{M}(X)$, which is endowed with the weak-* topology. The subset of non-negative Borel measures is $\mathcal{M}_+(X)$, and $\mathcal{P}(X)$ consists of probability measures. We write $\supp (\mu)$ for the support of $\mu\in \mathcal{M}(X)$.  For $f\in C_b(X)$ and $\mu\in\mathcal{P}(X)$, the integration is written as $\langle f,\mu\rangle:=\int_X f\, d\mu$.
    
     Finally, the weak-* topology on $\mathcal P(X)$ is compatible with the dual-Lipschitz distance:
        \begin{equation*}
            \|\mu-\nu\|_L^*:=\sup_{\|f\|_{L}\le 1}|\langle f,\mu\rangle-\langle f,\nu\rangle|\quad \text {for } \mu,\nu\in\mathcal{P}(X).
        \end{equation*}

    \section{Model and results}\label{sec_model}

    We prove the \hyperlink{MT}{\color{black}Main Theorem} for the random wave equation~\eqref{waveequation} in Section~\ref{subsec_maintheorem}. To this end, we first present our RDS model based on asymptotic compactness in Section~\ref{subsec_mixing} that ensures a local form of LDP, also serving as the main contributions of this paper. The precise statements are contained in Theorem~\ref{Thm LDP} and Corollary~\ref{Cor level1LDP}, whose proofs are intricate and postponed to Section~\ref{sec_proofofabstractldp}. 
    In addition, we present two more applications of our RDS setting: a toy model indicates that the localness can be intrinsic, while the 2D Navier--Stokes system suggests how to remove the localness under additional properties.
    
    \vspace{8mm}
    \subsection{RDS setting and large deviations}\label{subsec_mixing}

    \subsubsection{Review on mixing}
    
    We begin with our RDS setting and a quick review of the results on exponential mixing. Let $(\mathcal{X},d)$ be a Polish space, and $(\mathcal{E},d_{\mathcal E})$ a compact metric space. The sequence $(\xi_n)_{n\in\mathbb{N}_0}$ stands for $\mathcal E$-valued i.i.d.~random variables. Without loss of generality, assume $\mathcal E=\supp(\mathscr{D}(\xi_0))$. Given the locally Lipschitz mapping $S\colon \mathcal{X}\times \mathcal E\to \mathcal{X}$, we intend to consider the RDS defined by
    \begin{equation}
        x_n=S(x_{n-1},\xi_{n-1}),\quad x_0=x\in \mathcal{X}.\label{RDS}
    \end{equation}
    
    In order to indicate the initial condition and random inputs, we also denote
    \[x_n=S_{n}(x;\xi_0,\dots,\xi_{n-1})=S_{n}(x;\boldsymbol{\xi})\]
    with $\boldsymbol{\xi}:=(\xi_n)_{n\in\mathbb{N}_0}$. Moreover, given a sequence $\boldsymbol{\zeta}=(\zeta_n)_{n\in\mathbb{N}_0}\in \mathcal E^{\mathbb{N}_0}$, we denote by
    \[S_n(x;\zeta_0, \dots, \zeta_{n-1})=S_{n}(x;\boldsymbol{\zeta})\]
    the corresponding deterministic process defined by replacing $\xi_n$ with $\zeta_n$ in \eqref{RDS}.

    By virtue of the continuity of $S$ and assumption that $\xi_n$ are i.i.d., the RDS \eqref{RDS} defines a Feller family of discrete-time Markov processes in $\mathcal{X}$; see, e.g.~\cite[Section 1.3]{KS-12}. In the context, we denote the corresponding Markov family by $\mathbb{P}_x$, the expected values by $\mathbb{E}_x$, and the Markov transition functions by $P_n(x,\cdot )$, i.e.,
    \[P_n(x,B)=\mathbb{P}_x(x_n\in B)\quad \text{for }B\in\mathcal{B}(\mathcal{X}).\]
    The standard notation for the Markov semigroup $P_n\colon C_b(\mathcal{X})\rightarrow C_b(\mathcal{X})$ is employed:
    \begin{equation*}
        P_nf(x):= \int_{\mathcal{X}} f(y)P_n(x,dy).
    \end{equation*}
    And $P^*_n\colon \mathcal{P}(\mathcal{X})\rightarrow \mathcal{P}(\mathcal{X})$ refers to the dual semigroup. For the sake of simplicity, we write $P=P_1$ and $P^*=P_1^*$. A probability measure $\mu$ is called \textit{invariant} for this RDS if $P^*\mu=\mu$. We say $Y\subset \mathcal{X}$ is invariant, if 
    \[S(y,\zeta)\in Y\quad \text{for any }y\in Y\text{ and }\zeta\in \mathcal E.\]
        
    Below is a list of hypotheses regarding the abstract criteria for exponential mixing and LDP. 

    \begin{itemize}
        \item [\hypertarget{AC}{$\mathbf{(AC)}$}] {\bf Asymptotic compactness.} There exists a compact invariant set $Y\subset \mathcal{X}$, a measurable function $V\colon \mathcal{X}\to [0,\infty)$ which is bounded on bounded sets, and a constant $\kappa>0$, such that for any $x\in \mathcal{X}$, $\boldsymbol{\zeta}\in \mathcal E^{\mathbb{N}_0}$ and $n\in \mathbb{N}_0$,
        \begin{equation}
            \dist_{\mathcal{X}} (S_n(x;\boldsymbol{\zeta}),Y)\le V(x)e^{-\kappa n}.\label{asymptotic compactness}
        \end{equation}
        
        \item [\hypertarget{I}{$\mathbf{(I)}$}] {\bf Irreducibility to a point.} There exists a point $z\in Y$ such that for any $\varepsilon>0$, one can find an integer $N=N(\varepsilon)\in \mathbb{N}$ satisfying
        \begin{equation}
            \inf_{y\in Y}P_N(y,B_Y(z,\varepsilon))>0.\label{irreducibility}
        \end{equation}
         
        \item [\hypertarget{C}{$\mathbf{(C)}$}] {\bf Coupling condition on $Y$.} There exists $q\in [0,1)$ such that for any $x,x'\in Y$, the pair $(P(x,\cdot),P(x',\cdot))$ admits a coupling\footnote{For $\mu,\nu\in\mathcal{P}(\mathcal{X})$, a coupling between $\mu$ and $\nu$ is a pair of $\mathcal{X}$-valued random variables with marginal distributions equal to $\mu$ and $\nu$, respectively.} $(R(x,x'),R'(x,x'))$  on a common probability space $(\Omega,\mathcal{F},\mathbb{P})$ with
        \begin{equation}
            \mathbb{P}(d(R(x,x'),R'(x,x'))>qd(x,x'))\le g(d(x,x')),\label{squeezing}
        \end{equation}
        where $g\colon[0,\infty)\rightarrow[0,\infty)$ is a continuous increasing function satisfying
        \begin{equation}
            g(0)=0\quad\text{and}\quad \limsup\limits_{k\rightarrow\infty}\frac{1}{k}\log g(q^k)\in [-\infty,0),\label{g-def}
        \end{equation}
        and the mappings $R,R'\colon Y\times Y\times \Omega\rightarrow Y$ are measurable.
    \end{itemize}

    To formulate the mixing property, let us introduce the following concept.
    \begin{definition}\label{def_attainableset}
        For any $Z\subset \mathcal{X}$, we define the \textit{attainable set} at time $n\in \mathbb{N}$ by
        \begin{equation*}
            \mathcal{A}_n(Z):=\{S_n(x;\zeta_0,\dots,\zeta_{n-1}):x\in Z,\ \zeta_0,\dots,\zeta_{n-1}\in \mathcal E\},
        \end{equation*}
        and the attainable set $\mathcal{A}(Z):=\overline{\bigcup_{n=0}^\infty\mathcal{A}_n(Z)}$, where $\mathcal{A}_0(Z):=Z$. 
    \end{definition}

    For simplicity, let us set $\mathcal{A}:=\mathcal{A}(\{z\})$, then one can check that $\mathcal{A}\subset Y$.  A stronger version of the following theorem on mixing is established in \cite[Theorem 2.1 and Remark 2.1]{LWXZZ24}.
    
    \begin{theorem}\label{Thm Exponential mixing}
        Suppose that hypotheses \hyperlink{AC}{$\color{black}\mathbf{(AC)}$}, \hyperlink{I}{$\color{black}\mathbf{(I)}$} and \hyperlink{C}{$\color{black}\mathbf{(C)}$} are satisfied. Then the Markov process $x_n$ defined by \eqref{RDS} admits a unique invariant measure $\mu_*\in\mathcal{P}(\mathcal{X})$, and there exist constants  $C,\gamma>0$ such that for any $x\in \mathcal{X}$ and $n\in \mathbb{N}_0$,
        \begin{equation}
        \|P_n(x,\cdot)-\mu_*\|_L^*\le C(1+V(x))e^{-\gamma n}.\label{Exponential mixing}
        \end{equation}
        Moreover, the support of $\mu_*$ coincides with $\mathcal{A}$.
    \end{theorem}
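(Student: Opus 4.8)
The plan is to establish Theorem~\ref{Thm Exponential mixing} by a coupling argument on the compact attractor $Y$, combined with the contraction estimate \eqref{asymptotic compactness} to transfer the mixing statement from $Y$ to the whole space $\mathcal X$. First I would prove uniqueness and exponential mixing \emph{on} the set $\mathcal A$ (or on $Y$): using the coupling hypothesis \hyperlink{C}{$\mathbf{(C)}$}, I would iterate the one-step coupling $(R,R')$ to build a coupling of the trajectories issuing from any two points $x,x'\in Y$. A standard stopping-time argument — wait until the distance first drops below some threshold, at which point the squeezing estimate \eqref{squeezing} gives geometric contraction with failure probability controlled by $g$ — shows that $d(x_n,x_n')$ decays geometrically in probability, while irreducibility \hyperlink{I}{$\mathbf{(I)}$} guarantees that the two trajectories are driven near the common point $z$ within a fixed number of steps with uniformly positive probability, so the waiting time has exponential tails. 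This yields $\|P_n(y,\cdot)-P_n(y',\cdot)\|_L^*\le C e^{-\gamma n}$ for $y,y'\in Y$, hence a unique invariant measure $\mu_*$ supported in $\overline{\mathcal A}$ with exponential convergence from every initial point of $Y$.

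Next I would extend the estimate from $Y$ to a general $x\in\mathcal X$. The key point is that \eqref{asymptotic compactness} says $S_n(x;\boldsymbol\zeta)$ lies within $V(x)e^{-\kappa n}$ of the compact invariant set $Y$, \emph{uniformly} in the noise realization $\boldsymbol\zeta$. So for a fixed intermediate time $m$, one writes $P_n(x,\cdot)=\int P_{n-m}(\,\cdot\,,\cdot)\,P_m(x,dy)$; since $P_m(x,\cdot)$ is (deterministically) supported in the $V(x)e^{-\kappa m}$-neighbourhood of $Y$, and $\mu_*$ is invariant, one estimates $\|P_n(x,\cdot)-\mu_*\|_L^*$ by the Lipschitz dependence of $S$ (to move the mass from near $Y$ onto $Y$, at cost a constant times $V(x)e^{-\kappa m}$) plus the contraction on $Y$ applied over the remaining $n-m$ steps, i.e.\ $Ce^{-\gamma(n-m)}$. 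Optimizing the split $m\sim \lambda n$ produces the bound $\|P_n(x,\cdot)-\mu_*\|_L^*\le C(1+V(x))e^{-\gamma n}$ after relabelling constants, with $V$ appearing linearly exactly because the projection-to-$Y$ error is linear in the neighbourhood radius.

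For the support statement $\supp\mu_* = \mathcal A$: one inclusion, $\supp\mu_*\subset\mathcal A$, follows because $\mathcal A$ is closed, invariant under the dynamics, and (by irreducibility through $z$) absorbing for the transition probabilities, so $P^*$ preserves the set of measures carried by $\mathcal A$ and the unique fixed point must be one of them. The reverse inclusion $\mathcal A\subset\supp\mu_*$ uses irreducibility again: any point of $\mathcal A$ is of the form $S_n(z;\zeta_0,\dots,\zeta_{n-1})$ with $\zeta_j\in\mathcal E=\supp(\mathscr D(\xi_0))$, so by continuity of $S$ the transition kernel $P_n(y,\cdot)$ for $y$ near $z$ charges every neighbourhood of that point; since $\mu_*$ charges every neighbourhood of $z$ (as $z\in\mathcal A=\mathcal A(\{z\})$ and $\supp\mu_*\subset\mathcal A$ is invariant and nonempty — a short argument shows $z\in\supp\mu_*$) and $\mu_*=P_n^*\mu_*$, the point lies in $\supp\mu_*$.

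The main obstacle I expect is the coupling-iteration step on $Y$: one must carefully combine the one-step coupling $(R,R')$ with the irreducibility input so that, no matter where the two trajectories wander in $Y$, they are regularly pushed into a regime where $d(x_n,x_n')$ is small enough for the squeezing \eqref{squeezing} to bite, and then verify that the resulting random "return/contraction" times have exponentially small tails — this is where the precise growth condition \eqref{g-def} on $g$ (namely $\limsup_k \tfrac1k\log g(q^k)<0$) is consumed, ensuring the probability of the contraction step failing decays fast enough to be summable against the geometric factor $q^k$. Everything else (the neighbourhood-to-$Y$ transfer via Lipschitz continuity, the support identification) is comparatively routine once the contraction on $Y$ is in hand; since the excerpt notes this is a (weaker form of a) result already proved in \cite{LWXZZ24}, I would follow that argument, citing it for the delicate probabilistic estimates and supplying only the $\mathcal X$-extension via \hyperlink{AC}{$\mathbf{(AC)}$} in detail.
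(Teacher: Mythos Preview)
The paper does not actually prove Theorem~\ref{Thm Exponential mixing}: immediately before the statement it says ``A stronger version of the following theorem on mixing is established in \cite[Theorem 2.1 and Remark 2.1]{LWXZZ24}'', and no further argument is given. So there is no in-paper proof to compare against; your plan of citing \cite{LWXZZ24} for the delicate coupling estimate on $Y$ and supplying the extension to $\mathcal X$ via \hyperlink{AC}{$\mathbf{(AC)}$} is exactly in the spirit of what the paper does (only you are more explicit).

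Two small points on your sketch are worth tightening. First, in the extension step you write that moving the mass from the $V(x)e^{-\kappa m}$-neighbourhood of $Y$ onto $Y$ costs ``a constant times $V(x)e^{-\kappa m}$''. Be careful: to compare $P_{n-m}(x_m,\cdot)$ with $P_{n-m}(y_m,\cdot)$ you couple via the same noise, and the local Lipschitz constant of $S_{n-m}$ may grow like $L^{n-m}$. The bound you actually get is $L^{n-m}V(x)e^{-\kappa m}+Ce^{-\gamma(n-m)}$, and the optimization $m\sim\alpha n$ still yields exponential decay (with a smaller rate), but the constant in front is not simply $1$ and the argument deserves one more line. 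Second, your justification of $\supp\mu_*\subset\mathcal A$ via ``$\mathcal A$ is absorbing'' is not quite right: irreducibility \hyperlink{I}{$\mathbf{(I)}$} only gives positive (not full) probability of entering neighbourhoods of $z$. The clean argument is that $P_n(z,\cdot)$ is supported in $\mathcal A_n(\{z\})\subset\mathcal A$ and converges weakly to $\mu_*$ by mixing, so $\supp\mu_*\subset\overline{\mathcal A}=\mathcal A$. Your argument for the reverse inclusion is fine.
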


    \subsubsection{Large deviation principle}\label{subsec_ldpstatement}

    We proceed to formulate the LDP associated with the RDS \eqref{RDS}. The result for test functions, as stated in the \hyperlink{MT}{\color{black}Main Theorem}, is referred to as the level-1 LDP. It is a consequence of the level-2 LDP, which means at the level of empirical distributions.
    
    To illustrate the level-2 LDP, let $x\in \mathcal{X}$ be an arbitrary initial point, denote by $\delta_{x}$ the Dirac measure concentrated at $x$, and introduce the empirical distributions
    \begin{equation*}
        L_{n,x}:=\frac{1}{n}\sum_{k=1}^n\delta_{x_k}.
    \end{equation*}
    In order to investigate the locally uniform (with respect to $x$) convergence, for any $R>0$, set
    \begin{equation}
        X_R:=\mathcal{A}(B_{\mathcal{X}}(Y,R)),\label{XR}
    \end{equation}
    Then by hypothesis \hyperlink{AC}{$\color{black}\mathbf{(AC)}$}, $X_R$ is a bounded invariant set, and
     \begin{equation*}
         \mathcal{A}\subset Y\subset X_R\subset\mathcal{X}\quad\text{for any }R>0.
     \end{equation*}
    In particular, we can restrict the phase space  of the RDS to each $X_R$. Moreover, hypotheses \hyperlink{AC}{$\color{black}\mathbf{(AC)}$}, \hyperlink{I}{$\color{black}\mathbf{(I)}$} and \hyperlink{C}{$\color{black}\mathbf{(C)}$} are still valid, with $\mathcal{X}$ replaced by $X_R$, the compactum $Y$ unchanged, and $V(x)$ in \eqref{asymptotic compactness} replaced by a constant depending on $R$.
    \vspace{3mm}
    
    Following the ideas for the study of uniform LDP from \cite{Kifer-90}, let us consider a \textit{net}\footnote{A directed set means a partially ordered set with the additional property that any finite subset admits an upper bound. And a net is a family labelled by a directed set.} of random probability measures. Define a partial order $\prec$ on $\Theta_R:=\mathbb{N}\times X_R$ by
    \begin{equation*}
        (n_1,x_1)\prec (n_2,x_2)\quad\text{if and only if}\quad n_1\le n_2.
    \end{equation*} 
    Then $(\Theta_R,\prec)$ is a directed set, and $L_{n,x}=L_\theta$ is a net indexed by $\Theta_R$.
    
    Let $r_\theta:=n$ for $\theta=(n,x)$. For any $V\in C_b(X_R)$, the pressure function $\Lambda_R(V)$ is defined by
    \begin{equation}
        \Lambda_R(V):=\limsup_{\theta\in\Theta_R}\frac{1}{r_\theta}\log \mathbb{E}e^{r_\theta\langle V,L_{\theta}\rangle}.\label{pressuredefinition}
    \end{equation}
    Equivalently,
    \[\Lambda_R (V)=\limsup_{n\to \infty} \sup_{x\in X_R}\frac{1}{n}\log \mathbb{E}_x e^{V(x_1)+\cdots +V(x_n)}.\]
    One can readily see that $\Lambda_R\colon C_b(X_R)\to \mathbb{R}$ is convex and $1$-Lipschitz, and that for any $C\in \mathbb{R}$,
    \begin{equation}
        \inf_{X_R} V\le \Lambda_R (V)\le \sup_{X_R} V,\quad \Lambda_R (C)=C\quad \text{and}\quad \Lambda_R (V+C)=\Lambda_R (V)+C.\label{Lambdaconstant}
    \end{equation}
    The Legendre transform of $\Lambda_R$ is denoted by $I_R\colon \mathcal{P}(X_R)\to [0,\infty]$, whose definition is
    \begin{equation}
        I_R(\sigma):=\sup\limits_{V\in C_b(X_R)}(\langle V,\sigma\rangle-\Lambda_R(V)).\label{ratedefinition}
    \end{equation}
    Here the inequality $I_R(\sigma)\ge 0$ follows from $\Lambda_R (0)=0$. It is well-known from convex analysis that $I_R$ is convex and lower semicontinuous. In particular, $I_R$ is a rate function on $\mathcal{P}(X_R)$. Moreover, the pressure function can be reconstructed by the duality formula
    \begin{equation}
        \Lambda_R(V)=\sup_{\sigma\in \mathcal{P}(X_R)}(\langle V,\sigma\rangle-I_R(\sigma)).\label{inverseLegendre}
    \end{equation}

    Later we will demonstrated that $I_R(\sigma)=\infty$ for any $\sigma\not \in \mathcal{P}(Y)$ (see the derivation of \eqref{D(I)subsetP(Y)}), and thus the duality relation \eqref{inverseLegendre} can be rewritten as
    \[\Lambda_R(V)=\sup_{\sigma\in \mathcal{P}(Y)} (\langle V,\sigma\rangle-I_R(\sigma))\quad\text{for } V\in C_b(X_R).\]
    Notice that $\mathcal{P}(Y)$ is compact\footnote{It is well-known that since $Y$ is compact metric space, $\mathcal{P}(Y)$ is compact in the weak-* topology. The natural inclusion $\mathcal{P}(Y)\hookrightarrow \mathcal{P}(X_R)$ is continuous; see Appendix~\ref{subsec_McShane}.}, which guarantees at least one $\sigma\in \mathcal{P}(Y)$ satisfying
    \[\Lambda_R(V)=\langle V,\sigma\rangle-I_R(\sigma).\]
    Any probability measure $\sigma\in\mathcal{P}(X_R)$ that satisfies this relation is called an \textit{equilibrium state} for $V$. When the equilibrium state is unique, it is denoted by $\sigma_V$.
    \vspace{3mm}

    \begin{definition}\label{def_mathcalV}
        Let  $\mathcal{V}_R\subset L_b(X_R)$ be the family consisting of all $V\in L_b(X_R)$, such that the following two properties hold:
        \begin{enumerate}
            \item[(1)] In the definition of $\Lambda_R(V)$, the right-hand side of \eqref{pressuredefinition} is convergent, i.e.
            \begin{equation*}
                \Lambda_R(V)=\lim_{\theta\in\Theta_R}\frac{1}{r_\theta}\log \mathbb{E} e^{r_\theta\langle V,L_{\theta}\rangle}.
            \end{equation*}

            \item[(2)] $V$ admits a unique equilibrium state $\sigma_V$.
        \end{enumerate}
        
        We further define $\mathcal{W}_R:=\{\sigma_V: V\in \mathcal{V}_R\}\subset \mathcal{P}(X_R)\subset \mathcal{P}(\mathcal{X})$ and $\mathcal{W}\subset \mathcal{P}(\mathcal{X})$ by
        \[\mathcal{W}=\bigcup_{R>0} \mathcal{W}_R.\]
    \end{definition}
    \vspace{3mm}
    
    The following theorem concerning the level-2 local LDP is the main result for our RDS model. Let $C_+(Y)$ consist of all positive continuous functions on the compact space $Y$. 

    \begin{theorem}\label{Thm LDP}
        Suppose that hypotheses \hyperlink{AC}{$\color{black}\mathbf{(AC)}$}, \hyperlink{I}{$\color{black}\mathbf{(I)}$} and \hyperlink{C}{$\color{black}\mathbf{(C)}$} are satisfied, and that there exists $\zeta\in \mathcal E$ such that $S(z,\zeta)=z$. Then there exists a good rate function  $\mathcal{I}\colon \mathcal{P}(\mathcal{X})\to [0,\infty]$ given by
        \begin{equation}\label{I-variantion formula}
            \mathcal{I}(\sigma)=\begin{cases}
                -\inf\limits_{f\in C_+(Y)}\displaystyle\int_Y \log\left(\frac{Pf}{f}\right)\, d\sigma\quad&\text{for }\sigma\in\mathcal{P}(Y),\\
                \qquad\infty\quad&\text{otherwise},
            \end{cases}
        \end{equation}
        such that 
        for any closed subset $F\subset\mathcal{P}(\mathcal{X})$ and open subset $G\subset\mathcal{P}(\mathcal{X})$,
        \begin{align}
            \limsup_{n\to \infty} \frac{1}{n}\log \mathbb{P}(L_{n,x}\in F)&\le -\inf\{\mathcal{I}(\sigma):\sigma\in F\},\label{level2XR_upperbound}\\
            \liminf_{n\to\infty}\frac{1}{n}\log \mathbb{P}(L_{n,x}\in G)&\ge -\inf\{\mathcal{I}(\sigma):\sigma\in G\cap \mathcal{W}\}.\label{level2XR_lowerbound}
        \end{align}

        Moreover, on each bounded invariant set $X_R$, the following holds:

        \begin{enumerate}
            \item[(a)] \hypertarget{a}{{\color{black}\rm\bf Rate function.}} The restriction of $\mathcal{I}$ to $\mathcal{P}(X_R)$ is equal to $I_R$, i.e.~$\mathcal{I}|_{\mathcal{P}(X_R)}=I_R$.

            \item[(b)]  \hypertarget{b}{{\color{black}\rm\bf Uniform local LDP.}} For any $x\in X_R$, \eqref{level2XR_upperbound} holds for any closed set $F\subset \mathcal{P}(X_R)$, and \eqref{level2XR_lowerbound} holds for any open set $G\subset \mathcal{P}(X_R)$ with $\mathcal{W}$ replaced by $\mathcal{W}_R$. In addition, the convergences in the inequalities are uniform with respect to $x\in X_R$.

            \item[(c)]  \hypertarget{c}{{\color{black}\rm\bf Non-triviality.}}   For any $L>0$, there exists $\delta=\delta(L)>0$ such that $\mathcal{V}_{R,L,\delta}\subset \mathcal{V}_R$, where
            \[\mathcal{V}_{R,L,\delta}:=\{V\in L_b(X_R):\Lip(V)<L,\ \Osc(V)<\delta\}.\]
            Furthermore, $\mathcal{W}_R$ is independent of $R$, and thus $\mathcal{W}=\mathcal{W}_R$ for any $R>0$.
        \end{enumerate}
    \end{theorem}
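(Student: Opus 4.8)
\emph{Strategy.} The plan is to deduce Theorem~\ref{Thm LDP} from the $\mathbf{(AET)}$-based Kifer criterion (Theorem~\ref{Thm Kifer}) by verifying its hypotheses on each bounded invariant set $X_R$, and then gluing over $R>0$. First I would note that the deterministic attraction \eqref{asymptotic compactness} forces every trajectory issuing from a bounded subset of $X_R$ to enter an arbitrarily small neighbourhood of $Y$ after finitely many steps, with $\sum_{k\ge 1}\dist_{\mathcal X}(x_k,Y)$ bounded uniformly; consequently the net $(L_{n,x})_{\theta\in\Theta_R}$ is asymptotically exponentially tight in $\mathcal P(X_R)$, concentrating on $\mathcal P(Y)$. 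With $\mathbf{(AET)}$ in hand, Theorem~\ref{Thm Kifer} reduces everything to: (i) proving that $\mathcal V_R$ contains all Lipschitz functions of sufficiently small oscillation, i.e.~claim~(c); and (ii) identifying the Legendre transform $I_R$ of $\Lambda_R$ with the Donsker--Varadhan functional in \eqref{I-variantion formula}.

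\emph{The generalized semigroup and claim~(c).} For $V\in L_b(X_R)$ introduce the Feynman--Kac type operators
\[\mathfrak P^V_n f(x):=\mathbb E_x\bigl[e^{V(x_1)+\cdots+V(x_n)}f(x_n)\bigr],\]
so that $\mathbb E_x e^{r_\theta\langle V,L_\theta\rangle}=\mathfrak P^V_n\mathbf 1(x)$ for $\theta=(n,x)$. The coupling condition $\mathbf{(C)}$ yields a contraction of the normalized operators on $C_+(Y)$ --- equivalently of their dual action on $\mathcal P(Y)$ in a Kantorovich-type metric --- while the irreducibility $\mathbf{(I)}$ together with the fixed point $S(z,\zeta)=z$ supplies the uniform positivity needed to run a generalized Perron--Frobenius/Birkhoff argument; the asymptotic compactness $\mathbf{(AC)}$ is what lets one absorb the non-compact tail of $X_R$ and transfer the estimates from the compactum $Y$ to all of $X_R$, uniformly in the base point. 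This should produce, for every $V\in\mathcal V_{R,L,\delta}$ with $\delta=\delta(L)$ small, a unique $\lambda_V>0$ and $h_V\in C_+(Y)$ (extended to $X_R$) with $\mathfrak P^V h_V=\lambda_V h_V$, the genuine limit $\Lambda_R(V)=\lim_n\frac1n\log\mathfrak P^V_n\mathbf 1=\log\lambda_V$, and a unique equilibrium state $\sigma_V\in\mathcal P(Y)$ (the associated twisted stationary measure). This gives $\mathcal V_{R,L,\delta}\subset\mathcal V_R$; since the eigenproblem lives on the compact $Y$ and does not see $R$, both $\lambda_V$ and $\sigma_V$ depend only on $V|_Y$, so $\mathcal W_R$ is independent of $R$ and $\mathcal W=\mathcal W_R$, completing claim~(c). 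I expect this step --- the Krein--Rutman/coupling analysis of $\mathfrak P^V$ in the asymptotically compact, non-compact-phase-space setting --- to be the principal obstacle.

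\emph{Identifying the rate function (claim (a)).} Two inequalities are needed. If $\sigma\notin\mathcal P(Y)$, choose $\delta>0$ with $\sigma(\{\dist_{\mathcal X}(\cdot,Y)>\delta\})>0$ and $\phi\in C_b(X_R)$ with $0\le\phi$, $\phi\equiv0$ on a neighbourhood of $Y$ and $\langle\phi,\sigma\rangle>0$; then \eqref{asymptotic compactness} gives $\sum_{k\ge1}\phi(x_k)\le C(x)<\infty$, so $\Lambda_R(c\phi)=0$ for all $c>0$ and hence $I_R(\sigma)\ge\sup_{c>0}c\langle\phi,\sigma\rangle=\infty$; this is precisely the statement, used near \eqref{D(I)subsetP(Y)}, that $I_R\equiv\infty$ off $\mathcal P(Y)$. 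For $\sigma\in\mathcal P(Y)$: on the one hand, using the invariance of $Y$ the eigenrelation $\mathfrak P^V h_V=\lambda_V h_V$ restricts on $Y$ to $V=\log\lambda_V-\log(Ph_V/h_V)$ up to a bounded boundary term, so $\langle V,\sigma\rangle-\Lambda_R(V)=-\int_Y\log(Ph_V/h_V)\,d\sigma\le-\inf_{f\in C_+(Y)}\int_Y\log(Pf/f)\,d\sigma$; on the other hand, given $f\in C_+(Y)$ pick a bounded continuous extension $\tilde f$ of $f$ to $X_R$ with $\inf\tilde f>0$ and test with $V:=\log(\tilde f/P\tilde f)$, for which $\prod_{k=1}^n\tilde f(x_k)/P\tilde f(x_k)$ differs from a mean-one martingale only by the bounded factor $P\tilde f(x_0)/P\tilde f(x_n)$, whence $\Lambda_R(V)\le0$ and $I_R(\sigma)\ge\langle V,\sigma\rangle=\int_Y\log(f/Pf)\,d\sigma$. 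Taking suprema and invoking the duality \eqref{inverseLegendre} (together with the elementary fact that $\Lambda_R(V)$ only depends on the behaviour of $V$ near the attractor $Y$) identifies $I_R$ with the functional in \eqref{I-variantion formula}, which is manifestly $R$-independent.

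\emph{Conclusion.} With $\mathcal I$ given by \eqref{I-variantion formula} it is lower semicontinuous, being an extension by $\infty$ of a Legendre transform, and since $\{\mathcal I\le a\}\subset\mathcal P(Y)$, which is weak-$*$ compact, $\mathcal I$ is a good rate function. Claim~(a) and the uniform $X_R$-bounds of claim~(b) are then exactly the output of Theorem~\ref{Thm Kifer} once (c) and the rate identification are established, the uniformity in $x\in X_R$ reflecting that all constants in $\mathbf{(AC)}$ and $\mathbf{(C)}$ are uniform over $X_R$. Finally, every $x\in\mathcal X$ lies in some $X_R$ and then $L_{n,x}\in\mathcal P(X_R)$ for all $n$, so the global bounds \eqref{level2XR_upperbound}--\eqref{level2XR_lowerbound} follow from their $X_R$-versions together with $\mathcal I\equiv\infty$ off $\mathcal P(Y)\subset\mathcal P(X_R)$ (restricting the infimum to $\mathcal P(X_R)$ changes nothing) and $\mathcal W_R=\mathcal W\subset\mathcal P(X_R)$ for every $R$.
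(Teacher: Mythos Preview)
Your overall strategy is correct and matches the paper's: verify $\mathbf{(AET)}$ from $\mathbf{(AC)}$ by showing the empirical distributions concentrate on $\mathcal P(Y)$, apply the $\mathbf{(AET)}$-based Kifer criterion on each $X_R$, establish claim~(c) via a Perron--Frobenius analysis of the Feynman--Kac semigroup on the compactum $Y$ (this is Proposition~\ref{Prop Feynman-Kac stability} in the paper), and glue over $R$ at the end. Your observation that the $R$-independence of $\mathcal W_R$ stems from the eigenproblem living on the fixed compactum $Y$ is exactly right (Corollary~\ref{VrestricY}).

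There is, however, a genuine gap in your identification of the rate function on $\mathcal P(Y)$, specifically in the inequality $I_R(\sigma)\le -\inf_{f\in C_+(Y)}\int_Y\log(Pf/f)\,d\sigma$. Your ``on the one hand'' argument uses the eigenrelation $\mathfrak P^V h_V=\lambda_V h_V$ to express $\langle V,\sigma\rangle-\Lambda_R(V)$ in Donsker--Varadhan form, but this eigenfunction is only produced for $V\in\mathcal V_{R,L,\delta}$, i.e.\ for potentials of \emph{small oscillation}. Since $I_R(\sigma)$ is the supremum of $\langle V,\sigma\rangle-\Lambda_R(V)$ over \emph{all} $V\in C_b(X_R)$, and functions with $\Osc(V)<\delta$ can contribute at most $\delta$ to that supremum, your argument yields no upper bound on $I_R(\sigma)$ once $I_R(\sigma)>\delta$. (Incidentally, the eigenrelation $P(h_Ve^V)=\lambda_V h_V$ does not rewrite as $V=\log\lambda_V-\log(Ph_V/h_V)$; the correct test function is $f=h_Ve^V$, for which $V=\log\lambda_V+\log(f/Pf)$.) The paper avoids this by a different, classical device going back to Donsker--Varadhan: for an \emph{arbitrary} $V$ with $\Lambda(V)=0$ one sets
\[
f(x)=e^{V(x)}\sum_{n\ge 0}e^{-\varepsilon n}Q_n^V\mathbf 1(x),
\]
a resolvent-type approximate eigenfunction, checks $f\ge 1$, and computes $-\int\log(Pf/f)\,d\sigma\ge\langle V,\sigma\rangle-\varepsilon$. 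This construction works for every $V$ after the harmless additive normalization $\Lambda(V)=0$, and is the missing ingredient for $I_R\le$ DV. Your ``on the other hand'' direction (the martingale bound giving $I_R\ge$ DV) is correct and essentially coincides with the paper's argument.
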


    The proof of Theorem~\ref{Thm LDP} will be carried out in the next section. An immediate application of the aforementioned level-2 local LDP is the corresponding level-1 local LDP.

    \begin{corollary}\label{Cor level1LDP}
        Under the assumptions of Theorem~{\rm\ref{Thm LDP}}, given any test function $f\in L_b(\mathcal{X})$ with $f(z)\not =\langle f,\mu_*\rangle$, there exists a good rate function $\mathcal{I}^f:\mathbb{R}\to [0,\infty]$ and $\varepsilon>0$, such that for any $x\in \mathcal{X}$ and Borel set $B\subset [\langle f,\mu_*\rangle-\varepsilon,\langle f,\mu_*\rangle+\varepsilon]$ with $B\subset \overline{B^\circ}$,
        \begin{equation}
            \lim_{n\to \infty} \frac{1}{n}\log \mathbb{P}_x \left(\frac{1}{n}(f(x_1)+\cdots +f(x_n))\in B\right)=-\inf \{\mathcal{I}^f(p):p\in B\}.\label{RDS_level1}
        \end{equation}
        Moreover, the convergence in \eqref{RDS_level1} holds locally uniform with respect to $x\in \mathcal{X}$.
    \end{corollary}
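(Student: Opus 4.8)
The plan is a contraction-principle argument promoted to the local setting supplied by Theorem~\ref{Thm LDP}. Let $\pi_f\colon\mathcal P(\mathcal X)\to\mathbb R$ be the continuous affine map $\pi_f(\sigma)=\langle f,\sigma\rangle$ (continuous because $f\in L_b(\mathcal X)\subset C_b(\mathcal X)$), so that $\tfrac1n(f(x_1)+\cdots+f(x_n))=\pi_f(L_{n,x})$, and set
\[
\mathcal I^f(p):=\inf\{\mathcal I(\sigma):\sigma\in\mathcal P(\mathcal X),\ \pi_f(\sigma)=p\},\qquad\inf\emptyset:=+\infty.
\]
Because $\mathcal I$ is a good rate function and $\pi_f$ is continuous, $\{\mathcal I^f\le a\}=\pi_f(\{\mathcal I\le a\})$ is compact for every $a\ge0$, so $\mathcal I^f$ is a good rate function on $\mathbb R$. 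For the upper half of \eqref{RDS_level1}: since $\{\pi_f(L_{n,x})\in B\}\subset\{L_{n,x}\in\pi_f^{-1}(\overline B)\}$ and $\pi_f^{-1}(\overline B)$ is closed, \eqref{level2XR_upperbound} gives
\[
\limsup_{n\to\infty}\frac1n\log\mathbb P_x\bigl(\pi_f(L_{n,x})\in B\bigr)\le-\inf_{\pi_f^{-1}(\overline B)}\mathcal I=-\inf_{\overline B}\mathcal I^f,
\]
and part~(b) of Theorem~\ref{Thm LDP} makes this uniform over $x$ in any $X_R$, hence locally uniform on $\mathcal X$.

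For the matching lower bound I would study the one-parameter pressure $\Lambda(t):=\Lambda_R(tf)$. By part~(a) of Theorem~\ref{Thm LDP} and the duality \eqref{inverseLegendre}, together with $I_R\equiv\infty$ off $\mathcal P(Y)$, one has $\Lambda(t)=\sup_{\sigma\in\mathcal P(Y)}\bigl(t\langle f,\sigma\rangle-\mathcal I(\sigma)\bigr)$, which is independent of $R$, finite-valued, convex, hence continuous on $\mathbb R$. Splitting the supremum over the level sets of $\pi_f$ identifies $\Lambda$ as the Legendre transform of $\mathcal I^f$, so the transform $\Lambda^{*}(p):=\sup_t(tp-\Lambda(t))$ obeys $\Lambda^{*}\le\mathcal I^f$, with equality wherever $\mathcal I^f$ is convex. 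Moreover, since $\Lambda(t)$ is a supremum over the compact set $\mathcal P(Y)$ of affine functions of $t$, its subdifferential at $t$ equals $\partial\Lambda(t)=\{\langle f,\sigma\rangle:\sigma\ \text{an equilibrium state of}\ tf\}$ (the maximizers are precisely the equilibrium states, all lying in $\mathcal P(Y)$).

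Now use the non-triviality statement (c): there is $t_0>0$ such that $tf|_{X_R}\in\mathcal V_R$ for all $|t|<t_0$ (choose $L>\Lip(f)$, then $\delta=\delta(L)$, then $t_0$ with $t_0\Lip(f)<L$ and $t_0\Osc(f)<\delta$). For such $t$, $tf$ has a unique equilibrium state $\sigma_t:=\sigma_{tf}\in\mathcal W_R=\mathcal W$; by the previous paragraph $\partial\Lambda(t)=\{\langle f,\sigma_t\rangle\}$, so $\Lambda$ is differentiable on $(-t_0,t_0)$ with $\Lambda'(t)=\langle f,\sigma_t\rangle$ and $\mathcal I(\sigma_t)=t\Lambda'(t)-\Lambda(t)=\Lambda^{*}(\Lambda'(t))$. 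Also, $g\equiv1$ in \eqref{I-variantion formula} gives $\mathcal I(\mu_*)\le0$, while Jensen's inequality for $\exp$ and $P^{*}\mu_*=\mu_*$ give $\int_Y\log(Pg/g)\,d\mu_*\ge0$ for all $g\in C_+(Y)$, so $\mathcal I(\mu_*)=0$; since $0\in\mathcal V_R$ has a unique equilibrium state and the zeros of $\mathcal I$ on $\mathcal P(X_R)$ are exactly the equilibrium states of the function $0$, $\mu_*$ is the unique zero of $\mathcal I$, whence $\sigma_0=\mu_*$ and $\Lambda'(0)=f_*:=\langle f,\mu_*\rangle$. Therefore $\mathcal I^f=\Lambda^{*}$ on $\Lambda'((-t_0,t_0))\ni f_*$ (at $p=\Lambda'(t)$ we have $\Lambda^{*}(p)\le\mathcal I^f(p)\le\mathcal I(\sigma_t)=\Lambda^{*}(p)$); writing $J^{\circ}$ for the interior of this interval, $\mathcal I^f=\Lambda^{*}$ is finite, convex and hence continuous on $J^{\circ}$, and each $\mathcal I^f(p)$, $p\in J^{\circ}$, is attained at some $\sigma\in\mathcal W$. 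Granting $f_*\in J^{\circ}$ (addressed next), fix $\varepsilon>0$ with $[f_*-\varepsilon,f_*+\varepsilon]\subset J^{\circ}$. For Borel $B\subset[f_*-\varepsilon,f_*+\varepsilon]$ with $B\subset\overline{B^{\circ}}$ one has $\overline B=\overline{B^{\circ}}\subset J^{\circ}$, so continuity of $\mathcal I^f$ on $J^{\circ}$ and the fact that the minimizers lie in $\mathcal W$ give
\[
\inf_{\overline B}\mathcal I^f=\inf_{B^{\circ}}\mathcal I^f=\inf_B\mathcal I^f=\inf\{\mathcal I(\sigma):\pi_f(\sigma)\in B^{\circ},\ \sigma\in\mathcal W\}.
\]
Applying \eqref{level2XR_lowerbound} to the open set $\pi_f^{-1}(B^{\circ})$ then produces $\liminf_n\tfrac1n\log\mathbb P_x(\pi_f(L_{n,x})\in B)\ge-\inf_B\mathcal I^f$, matching the upper bound; uniformity in $x$ is inherited from part~(b).

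The one genuine obstacle is $f_*\in J^{\circ}$, i.e.\ that $\Lambda$ is not affine on any one-sided interval at $0$ --- equivalently that $\Lambda'$ takes values both below and above $f_*$ arbitrarily near $t=0$. This is where the hypotheses $f(z)\ne\langle f,\mu_*\rangle$ and the existence of $\zeta$ with $S(z,\zeta)=z$ are used: were $\Lambda$ affine on, say, $[0,t_0)$, then $\Lambda'\equiv f_*$ there, so $\mathcal I(\sigma_t)=0$, hence $\sigma_t=\mu_*$, so $\mu_*$ would be the equilibrium state of $tf$ for a whole range of $t$; a rigidity argument then forces $f-f_*$ to be a coboundary for the random dynamics on $\mathcal A=\supp\mu_*$, and evaluating the cohomological identity at the deterministic fixed point $z=S(z,\zeta)\in\mathcal A$ (Theorem~\ref{Thm Exponential mixing}) gives $f(z)=f_*$, contradicting the hypothesis. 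Carrying out this non-degeneracy step --- excluding the zero-variance (coboundary) behaviour of $\Lambda$ at the origin, thereby fixing a genuine two-sided $\varepsilon$ --- is where the real work lies; the rest is soft convex analysis plus bookkeeping of the uniformity already recorded in Theorem~\ref{Thm LDP}.
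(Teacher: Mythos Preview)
Your proposal follows essentially the same route as the paper: define $\mathcal I^f$ by contraction, study the one-parameter pressure $\Lambda(t)=\Lambda_R(tf)$, use part~(c) to get differentiability of $\Lambda$ on $(-t_0,t_0)$ and hence a window $J$ on which $\mathcal I^f$ is continuous with minimizers in $\mathcal W$, and finish via the condition $B\subset\overline{B^\circ}$. The identification $\Lambda'(0)=\langle f,\mu_*\rangle$ and the role of $\mathcal I(\mu_*)=0$ are also handled as in the paper.

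The one place where your sketch diverges from the paper, and where it is not quite complete, is the non-degeneracy step $\langle f,\mu_*\rangle\in J^\circ$. You propose: if $\Lambda$ were affine on $[0,t_0)$ then $\sigma_t=\mu_*$ for all such $t$, a rigidity argument makes $f-f_*$ a coboundary, and evaluating the cohomological identity at the deterministic fixed point $z=S(z,\zeta)$ yields $f(z)=f_*$. The difficulty is that the coboundary relation one obtains (e.g.\ $\sum_{k=1}^n(f(x_k)-f_*)=\phi(x_0)-\phi(x_n)$ from a martingale decomposition with $\phi=\sum_{k\ge1}P_k(f-f_*)$) holds almost surely along the random chain, and the particular deterministic trajectory $\xi_k\equiv\zeta$ may well have probability zero; you cannot simply plug in $x_k\equiv z$.

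The paper's mechanism is slightly different and avoids this issue. Assuming $\Lambda(V)=\langle V,\mu_*\rangle$ for $V=tf\in\mathcal V(Y)$ (with $\langle V,\mu_*\rangle$ normalized to $0$), the Feynman--Kac asymptotics (Proposition~\ref{Prop Feynman-Kac stability}) give $\lambda_V=1$ and hence $\sup_n\mathbb E_{\mu_*}\exp\bigl(\sum_{k=1}^nV(x_k)\bigr)<\infty$. On the other hand, the fixed point $S(z,\zeta)=z$ together with continuity of $S$ guarantees that with \emph{positive probability} the chain stays in $B_Y(z,\varepsilon)$ for arbitrarily many steps; this is precisely condition~\eqref{CLT-I} of Proposition~\ref{prop CLT}, and combined with $V(z)\ne0$ it forces the CLT variance $\sigma_V^2>0$. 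A nontrivial CLT is incompatible with uniformly bounded exponential moments, giving the contradiction. So the fixed point is used not to evaluate an a.s.\ identity pointwise, but to ensure a positive-probability event (long sojourn near $z$) that rules out zero variance.

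In short: your overall architecture matches the paper's, but for the crucial step you correctly flag, you should replace ``evaluate the coboundary at $z$'' by the paper's argument combining bounded Feynman--Kac growth with the positive-variance CLT coming from the positive-probability recurrence near the fixed point.
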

    
    The proof of Corollary~\ref{Cor level1LDP} is based on a modification of the contraction principle (see \cite[Theorem 4.2.1]{DZ10}), and we leave this technical proof to the appendix; see Appendix~\ref{sec_prooflevel1}.

    \begin{remark}
        According to the proof of Corollary~{\rm\ref{Cor level1LDP}}, for any $p\in [\langle f,\mu_*\rangle-\varepsilon,\langle f,\mu_*\rangle+\varepsilon]$, there exists $\sigma_p\in \mathcal{W}$ such that $\langle f,\sigma_p\rangle=p$ and $\mathcal{I}(\sigma_p)=\mathcal{I}^f(p)$. In particular, $\mathcal{I}^f(p)<\infty$. If the state $z$ in hypothesis \hyperlink{I}{$\color{black}\mathbf{(I)}$} satisfies that $\mathcal{A}(\{z\})\not =\{z\}$, then one may find $f\in L_b(\mathcal{X})$ with $f(z)\not=\langle f,\mu_*\rangle$. Therefore, in this case, the family $\mathcal{W}$ is uncountable.
    \end{remark}
    
    \begin{remark}
        We end this subsection with some further comments on our approach.

        \begin{enumerate}
            \item[(1)] This RDS setting contributes to novel results and improvements on the large deviations for random PDEs. In the context of wave equations, we establish the local LDP for arbitrary initial data belonging to the energy space. Regarding the Navier–Stokes system, we demonstrate the LDP {\rm(}without localness{\rm)} when the viscosity is sufficiently large.
            
            \item[(2)] The localness in Theorem~{\rm\ref{Thm LDP}} is indispensable, as suggested by Example~{\rm \ref{toymodel}}. It is noteworthy that {\rm \cite[Example 1]{BD96}} displays a Markov chain that is mixing, whereas the LDP can only be local. If $Y=\supp(\mu_*)$ and the LDP holds for initial states therein, we can recover the full strength of the LDP.
        \end{enumerate}
    \end{remark}

    \subsection{Proof of the Main Theorem}\label{subsec_maintheorem}
    
Making use of the RDS approach for LDP, we are now in a position to establish the local LDP for random wave equations  (\ref{waveequation}), as indicated in Figure~\ref{fig1}. Indeed, the \hyperlink{MT}{\color{black}Main Theorem} is a direct consequence of Theorem~\ref{Thm LDP} and Corollary~\ref{Cor level1LDP}, while the verification of hypotheses \hyperlink{AC}{$\color{black}\mathbf{(AC)}$}, \hyperlink{I}{$\color{black}\mathbf{(I)}$} and \hyperlink{C}{$\color{black}\mathbf{(C)}$} contributes to the main content of the proof.  More precisely, this has been achieved by the 
following technical route established in \cite{LWXZZ24}:

\begin{itemize}

\item [(1)]``Global stability of the unforced wave equation in Proposition~\ref{prop_globalstability}"  implies  hypothesis~\hyperlink{I}{$\color{black}\mathbf{(I)}$}, see Section~\ref{Irreducibility};

\item [(2)] ``Asymptotic compactness for the perturbed system in Proposition~\ref{prop_ac}" implies  hypothesis~\hyperlink{AC}{$\color{black}\mathbf{(AC)}$}, see Section~\ref{Asymptotic compactness};

\item [(3)]``A stabilization property from the control theory in Proposition~\ref{prop_control}" implies hypothesis~\hyperlink{C}{$\color{black}\mathbf{(C)}$}, see Section~\ref{Coupling condition}.
\end{itemize}

The rigorous statements and detailed proofs are collected in \cite{LWXZZ24}. For the reader's convenience,  we briefly sketch the core ideas for Propositions~\ref{prop_globalstability}-\ref{prop_control} in the following.

    \vspace{3mm}
    Let us begin with a brief summary of the RDS setting for the wave equation \eqref{waveequation}. When the random external force $\eta$ in equation \eqref{waveequation} is replaced by a deterministic one, the global well-posedness for the equation is known. More precisely,  for any $\zeta\in L^2(D_T)$ and $\boldsymbol{u}=(u_0,u_1)\in \mathcal{H}$, there exists a unique strong solution $u[t]$ up to time $T$, such that $u[t]\in \mathcal{H}$ for any $t\in [0,T]$. This leads to a locally Lipschitz mapping defined as
    \[S\colon \mathcal{H}\times L^2(D_T)\to \mathcal H,\quad S(\boldsymbol{u},\zeta)=u[T].\]
    
    Meanwhile, recall the random noise $\eta$ is in the form of \eqref{noise-structure1}, which implies that $(\eta_n)_{n\in\mathbb{N}_0}$ are i.i.d.~$L^2(D_T)$-valued random variables. Moreover, by~\eqref{noise-structure2}, the support of the common law of $\eta_n$ is a compact subset of $L^2(D_T)$.   Under the above settings, the solution of \eqref{waveequation} at times $nT$, constitute an RDS in the context of \eqref{RDS}, where $S$ is restricted on $\mathcal{H}\times \supp(\mathscr{D}(\eta_0))$. Specifically,
    \begin{equation}
        \boldsymbol{u}_n=S(\boldsymbol{u}_{n-1},\eta_{n-1}),\quad \boldsymbol{u}_0=(u_0,u_1)\in \mathcal{H}.\label{wave-RDS}
    \end{equation}

    \vspace{3mm}
    With theses preparations, we are able to finish the proof of the \hyperlink{MT}{\color{black}Main Theorem}.
    \begin{proof}[Proof of the \hyperlink{MT}{\color{black}Main Theorem}]
    
    As demonstrated in \cite{LWXZZ24}, under the setting for the damping term $a(x)$ and cutoff function $\chi(x)$, there exists an intrinsic quantity $\mathbf{T}>0$ with the following properties: for any $T>\mathbf{T}$ and $B_0>0$, there exists an integer $N=N(T,B_0)$ such that if 
    $b_{jk}>0$ for $j,k=1,\cdots,N$, then hypotheses \hyperlink{AC}{$\color{black}\mathbf{(AC)}$}, \hyperlink{I}{$\color{black}\mathbf{(I)}$} and \hyperlink{C}{$\color{black}\mathbf{(C)}$} are satisfied. A more detailed reasoning is contained in the rest of this section, based on Propositions~\ref{prop_globalstability}-\ref{prop_control} below. Thus Theorem~\ref{Thm LDP} and Corollary~\ref{Cor level1LDP} are applicable, and the Main Theorem follows as a consequence.
    \end{proof}

    \vspace{3mm}

    Let us also mention that the level-2 local LDP for the wave equation is actually implied by Theorem~\ref{Thm LDP}, whose explicit statements are omitted for the sake of simplicity. The rest of this section is devoted to recalling a series of auxiliary results for verifying the assumptions for random wave equation \eqref{waveequation}, followed by discussions on the main ideas for each conclusion. 

    \subsubsection{Verification of irreducibility}\label{Irreducibility} 
    
    As mentioned above, the irreducibility follows from the global stability for unforced problem, i.e.~(\ref{waveequation}) with zero external forces. To this end, we shall use the following result due to Zuazua \cite{Zuazua-90}. Recall $E(\cdot)$ refers to the energy \eqref{energy}.
    \begin{proposition}\label{prop_globalstability}
       Under the assumptions of the \hyperlink{MT}{\color{black}Main Theorem}, there exist constants $C,\beta>0$ such that for any $\boldsymbol{u}\in \mathcal{H}$,
        \begin{equation}
            E(S(\boldsymbol{u},0))\le Ce^{-\beta T} E(\boldsymbol{u}).\label{globalstability}
        \end{equation}
    \end{proposition}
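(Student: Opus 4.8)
\textbf{Proof plan for Proposition~\ref{prop_globalstability}.}
The statement asserts exponential energy decay for the unforced, locally damped cubic wave equation, which is precisely the classical result of Zuazua \cite{Zuazua-90}. Thus the plan is not to reprove stabilization from scratch, but to explain how the hypotheses of the \hyperlink{MT}{\color{black}Main Theorem} — in particular setting $\mathbf{(S1)}$ — place us inside the scope of that theorem and then record the consequence at the discrete time $T$. First I would fix the unforced trajectory $u[t]$, $t\in[0,T]$, with $u[0]=\boldsymbol u$, and recall that global well-posedness in $\mathcal H=H_0^1(D)\times L^2(D)$ holds for the defocusing cubic nonlinearity in dimension three (this is the $\zeta=0$ case of the locally Lipschitz solution map $S$ already introduced before the proof of the Main Theorem). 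Along this solution one has the energy identity
\begin{equation*}
E(u[t_2])-E(u[t_1])=-\int_{t_1}^{t_2}\!\!\int_D a(x)\,|\partial_t u|^2\,dx\,dt,
\end{equation*}
obtained by multiplying the equation by $\partial_t u$ and integrating by parts (the boundary term vanishes since $u|_{\partial D}=0$); in particular $t\mapsto E(u[t])$ is nonincreasing.

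The heart of the matter is the observability-type inequality: under $\mathbf{(S1)}$ the damping region contains a $\Gamma$-type domain $N_\delta(x_0)$, which satisfies the geometric control condition of Bardos--Lebeau--Rauch in the form exploited by Lions' multiplier method. Zuazua's theorem then provides a time $\mathbf T_0=\mathbf T_0(D,a)>0$ and a constant $c>0$ such that for any unforced solution,
\begin{equation*}
E(u[0])\le c\int_0^{\mathbf T_0}\!\!\int_D a(x)\,|\partial_t u|^2\,dx\,dt
= c\bigl(E(u[0])-E(u[\mathbf T_0])\bigr),
\end{equation*}
the equality being the energy identity above. Rearranging gives $E(u[\mathbf T_0])\le (1-c^{-1})E(u[0])$ with $1-c^{-1}\in[0,1)$, i.e. a fixed fractional decay over the window $[0,\mathbf T_0]$. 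Since we may take $\mathbf T$ in the Main Theorem to dominate $\mathbf T_0$ (it already dominates the larger intrinsic quantity $\mathbf T(D,a,\chi)$), iterating this estimate over $\lfloor T/\mathbf T_0\rfloor$ consecutive windows and using monotonicity of $E$ on the leftover interval yields
\begin{equation*}
E(u[T])\le (1-c^{-1})^{\lfloor T/\mathbf T_0\rfloor}E(\boldsymbol u)\le C e^{-\beta T}E(\boldsymbol u)
\end{equation*}
with $\beta=-\mathbf T_0^{-1}\log(1-c^{-1})>0$ and $C$ absorbing the one-window discrepancy; recalling $S(\boldsymbol u,0)=u[T]$ gives \eqref{globalstability}.

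The only genuine obstacle is the Sobolev-critical cubic nonlinearity: for $\zeta=0$ in dimension three, $u^3$ is energy-critical, so the naive multiplier computation produces a nonlinear remainder that is not obviously absorbable, and one must either invoke a unique-continuation argument to rule out nonzero solutions with $\partial_t u\equiv 0$ on the damping set, or use Zuazua's compactness–uniqueness scheme to upgrade a weak observability estimate to the strong one above. I would handle this exactly as in \cite{Zuazua-90}: the defocusing sign of $u^3$ keeps the energy coercive and the global bound $\sup_t E(u[t])\le E(\boldsymbol u)$ in hand, after which the compactness–uniqueness argument (reducing to a linear unique continuation statement on $N_\delta(x_0)$) closes the gap. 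Since all of this is carried out in \cite{Zuazua-90} under hypotheses implied by $\mathbf{(S1)}$, the proposition follows by citation together with the elementary iteration in the displayed chain above.
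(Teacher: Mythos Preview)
Your proposal is correct and matches the paper's approach: the paper does not prove Proposition~\ref{prop_globalstability} from scratch but simply attributes it to Zuazua \cite{Zuazua-90}, discussing the multiplier method under the $\Gamma$-condition in $\mathbf{(S1)}$ exactly as you do. Your write-up is in fact more detailed than the paper's, spelling out the energy identity, the observability inequality, and the iteration over windows of length $\mathbf T_0$; the one minor remark is that the sentence ``we may take $\mathbf T$ in the Main Theorem to dominate $\mathbf T_0$'' is unnecessary, since your floor-and-iterate argument already yields $E(u[T])\le Ce^{-\beta T}E(\boldsymbol u)$ for every $T>0$ once the constant $C$ absorbs a single window.
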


    The localized damping contributes to great difficulty to the demonstration of \eqref{globalstability}. When the damping coefficient $a(x)$ is a positive constant, this can be derived via a standard energy method. Various achievements has been made in recent decades in the case of localized damping. 
    It is noteworthy that such type of questions are also investigated for other dispersive equations; see, e.g.~\cite{DLZ-03,Laurent-10} for Schr\"odinger equations, and more recently, \cite{KX-22,CKX-23} for geometric wave equations.
    
    There are two principal methods in this direction. One is the multiplier method in the case that the damping satisfies the $\Gamma$-condition as in our setting; see \cite{Zuazua-90} for details. The other one is based on microlocal analysis, and imposes yet a weaker geometric condition on the damping coefficient, called geometric control condition (GCC for abbreviation); see, e.g.~\cite{BLR92}. Roughly speaking, the damping effect propagates along generalized geodesics, and the GCC requires any geodesic of length $L_0>0$ to meet the effective domain of the damping, i.e.~where $a(x)>0$. The GCC is almost also a necessary condition for exponential decay, and in our setting leads to a result similar to \eqref{globalstability} (see \cite{JL-13}), while the rate $\beta$ in that paper depends on the scale of $u[0]$. It is hopeful that this dependence can be removed. Another reason we impose the $\Gamma$-condition rather than the GCC lies in the derivation of the asymptotic compactness.
       
       \begin{proof}[Sketched proof of irreducibility]
       Thanks to Proposition~\ref{prop_globalstability} and the fact that $0\in \supp(\mathscr{D}(\eta_0))$ (ensured by the noise structure), hypothesis \hyperlink{I}{$\color{black}\mathbf{(I)}$} can be verified for $z=(0,0)$. In addition, $z$ is also a fixed point for the RDS \eqref{wave-RDS}, as $S(z,0)=z$.
    \end{proof}     

    \subsubsection{Verification of asymptotic compactness}\label{Asymptotic compactness}

        The following proposition regarding the so-called $\mathcal{H}$-$\mathcal{H}^{\scriptscriptstyle 4/7}$ asymptotic compactness is established in \cite[Theorem 1.1]{LWXZZ24}. Here the space $\mathcal{H}^{\scriptscriptstyle 4/7}$ is defined as follows: denote by $H^s (s>0)$ the domain of fractional power $(-\Delta)^{s/2}$, and set $\mathcal H^s=H^{1+s}\times H^s$. Then $\mathcal{H}^{\scriptscriptstyle 4/7}$
        is compactly embedded in $\mathcal{H}$.
        
        \begin{proposition}\label{prop_ac}
             Under the assumptions of the \hyperlink{MT}{\color{black}Main Theorem}, there exists a bounded set $Y_0\subset \mathcal{H}^{\scriptscriptstyle 4/7}$ and constants $C,\kappa>0$, such that for any $\boldsymbol{u}\in \mathcal{H}$,
            \[\dist_{\mathcal{H}}(\boldsymbol{u}_n,Y_0)\le C(1+E(\boldsymbol{u}))e^{-\kappa n}.\]
        \end{proposition}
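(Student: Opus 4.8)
The plan is to compare the trajectory $\boldsymbol{u}_n$ with the trajectory $\boldsymbol{w}_n$ of the \emph{same} equation \eqref{waveequation}, driven by the same noise but started from the zero datum, and to establish two facts: that $\boldsymbol{w}_n$ stays in a fixed bounded subset of the smoother space $\mathcal{H}^{4/7}$, with a bound independent of $\boldsymbol{u}$ and of the noise realization; and that the difference $\boldsymbol{u}_n-\boldsymbol{w}_n$ decays exponentially in $\mathcal{H}$ at a rate controlled by $E(\boldsymbol{u})$. Granting these, one simply takes $Y_0$ to be a closed ball $\{W\in\mathcal{H}^{4/7}:\|W\|_{\mathcal{H}^{4/7}}\le R_1\}$, which is compact in $\mathcal{H}$ by the compact embedding $\mathcal{H}^{4/7}\hookrightarrow\hookrightarrow\mathcal{H}$, and then
\[\dist_{\mathcal{H}}(\boldsymbol{u}_n,Y_0)\le \|\boldsymbol{u}_n-\boldsymbol{w}_n\|_{\mathcal{H}}\le C(1+E(\boldsymbol{u}))\,e^{-\kappa n},\]
using $\|\boldsymbol{u}\|_{\mathcal{H}}\le\sqrt{2E(\boldsymbol{u})}\le 2(1+E(\boldsymbol{u}))$ in the last step.

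The decay of $\boldsymbol{u}_n-\boldsymbol{w}_n$ is precisely a stabilization statement. If $u$ and $w$ both solve $\boxempty(\cdot)+a(x)\partial_t(\cdot)+(\cdot)^3=\eta$ with the same $\eta$, then $v:=u-w$ solves $\boxempty v+a(x)\partial_t v+q(x,t)\,v=0$ with $q:=u^2+uw+w^2=(u+\tfrac12 w)^2+\tfrac34 w^2\ge0$, i.e.\ a damped wave equation with a non-negative (but rough and time-dependent) potential. Because the damping is localized on the $\Gamma$-type domain $N_\delta(x_0)$, the plain energy identity is useless; instead one runs the Lions--Zuazua multiplier argument, testing against $m(x)\cdot\nabla v+cv$ with $m(x)=x-x_0$, to get an observability inequality bounding the total energy $\tfrac12\|\nabla v\|^2+\tfrac12\|\partial_t v\|^2+\tfrac12\int q v^2$ by its dissipation on $N_\delta(x_0)$ over a time window — the Sobolev-critical cubic being admissible since $H^1_0(D)\hookrightarrow L^6(D)$ keeps $q$ bounded in $L^\infty_t L^3_x$ once $u,w$ have entered their $\mathcal{H}$-absorbing ball. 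Iterating over windows of length $T$ yields $\|v[nT]\|_{\mathcal{H}}\le Ce^{-\beta nT}\|\boldsymbol{u}\|_{\mathcal{H}}$, so $\kappa=\beta T$. This is the content of Propositions~\ref{prop_globalstability} and \ref{prop_control}, whose proofs are exactly where control-theoretic stabilization replaces the Foias--Prodi estimates, and where the $\partial_t q$ term in the energy balance must be absorbed.

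The regularity bound on $\boldsymbol{w}_n$ is the heart of the matter. First, the defocusing energy estimate for the damped cubic wave (again via multipliers, using that $\tfrac14\|w\|_{L^4}^4$ is a friendly part of the energy) gives $\sup_t\|w[t]\|_{\mathcal{H}}\le R_0'$, with $R_0'$ depending only on the noise bound \eqref{noise-structure2}. For the $\mathcal{H}^{4/7}$ bound, note that \eqref{noise-structure2}, through the weight $\lambda_j^{2/7}=\lambda_j^{(4/7)/2}$, forces $\sup_t\|\eta(t,\cdot)\|_{H^{4/7}(D)}\lesssim B_0$ (the cutoff $\chi$ being smooth), while the cubic term is tamed by the borderline three-dimensional fractional Leibniz estimate $\|w^3\|_{H^{4/7}}\lesssim\|w\|_{H^{11/7}}\|w\|_{H^1}^2\lesssim\|w\|_{\mathcal{H}^{4/7}}\|w\|_{\mathcal{H}}^2$, available because $H^{11/7}(D)\hookrightarrow W^{4/7,6}(D)$ sits at the scaling endpoint, so the top-order norm enters only linearly. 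Writing $w$ by Duhamel against the damped wave propagator $\mathcal{S}(t)$ — which is exponentially stable on $\mathcal{H}^{4/7}$, again by multipliers and interpolation — one gets
\[\|w(t)\|_{\mathcal{H}^{4/7}}\lesssim\int_0^t e^{-c(t-\tau)}\Big(\|\eta(\tau)\|_{H^{4/7}}+\|w(\tau)\|_{\mathcal{H}^{4/7}}\,\|w(\tau)\|_{\mathcal{H}}^2\Big)\,d\tau,\]
and since $\|w\|_{\mathcal{H}}\le R_0'$, a Gronwall argument closes this and produces $\sup_n\|w[nT]\|_{\mathcal{H}^{4/7}}\le R_1$, uniformly in $\boldsymbol{u}$ and in the realization. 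To make the closure unconditional in the size of the noise, I would split $[0,\infty)$ into subintervals on which the energy-critical Strichartz norm of $w$ is small — these exist, with controlled number per unit time, because the forced damped defocusing cubic wave in dimension three has globally finite spacetime norms (Shatah--Struwe/Grillakis-type theory) — and iterate the bound interval by interval.

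The main obstacle is exactly this last point: since there is no smoothing effect, all the extra $4/7$ derivatives must be imported from the noise and carried through the cubic term without loss, which works only because $4/7$ (equivalently the top index $11/7$) is calibrated to the borderline 3D Sobolev embedding, and the absence of any dissipation in high frequencies forces one to replace energy methods by multiplier/observability estimates on $N_\delta(x_0)$ combined with the global Strichartz theory for the energy-critical wave — the genuinely hard input, together with the analogous stabilization difficulty for $v$; this is where essentially all the PDE content of \cite{LWXZZ24} is concentrated. Finally, the noise enters every estimate above only through the uniform bound \eqref{noise-structure2}, so all the estimates are deterministic, and the conclusion holds for $S_n(\boldsymbol{u};\boldsymbol{\zeta})$ for every $\boldsymbol{\zeta}\in\mathcal{E}^{\mathbb{N}_0}$, which is what hypothesis \hyperlink{AC}{$\color{black}\mathbf{(AC)}$} requires, with $V(\boldsymbol{u})=C(1+E(\boldsymbol{u}))$.
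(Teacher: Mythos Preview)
Your decomposition differs from the paper's. The paper splits $u=v+w$ where $v$ solves the \emph{free} linear damped wave equation $\boxempty v+a(x)\partial_t v=0$ with data $\boldsymbol{u}$, and $w$ solves the linear equation $\boxempty w+a(x)\partial_t w=\eta-u^3$ with zero data, treating the cubic as a known source. This makes the exponential decay of $v$ an immediate consequence of the classical linear stability under localized damping---no potential, hence no $\partial_t q$ term to absorb---while $w$ inherits $\mathcal{H}^{4/7}$ regularity from the forcing via Strichartz bounds on $u^3$ once $u$ sits in its $\mathcal{H}$-absorbing ball. Your route instead takes $w$ to be the full nonlinear solution from zero, which has the pleasant feature that $w$ is independent of $\boldsymbol{u}$ (so its $\mathcal{H}^{4/7}$ bound is automatically uniform in the initial datum), but transfers the hard step to the decay of $v=u-w$.

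That step is where your argument has a gap. The equation $\boxempty v+a(x)\partial_t v+qv=0$ with $q=u^2+uw+w^2$ is not what Propositions~\ref{prop_globalstability} or~\ref{prop_control} address: the first concerns the autonomous defocusing cubic $\boxempty u+a\partial_t u+u^3=0$, the second a finite-mode control problem, and neither yields exponential decay for a linear wave with a large, rough, time-dependent potential. The $\partial_t q$ contribution you flag is a genuine obstruction: the multiplier/observability estimate you sketch produces a decay rate that depends on $\|q\|_{L^\infty_t L^3_x}\sim E(\boldsymbol{u})$ before the absorbing set is reached, and matching this to a uniform rate $\kappa$ with the stated prefactor $1+E(\boldsymbol{u})$ requires a separate two-stage argument (first absorb in $\mathcal{H}$, then decay with uniform $q$) that you have not carried out. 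The strategy can be salvaged along these lines, but the paper's purely linear splitting avoids the issue entirely, which is why it is the cleaner choice.
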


        Let us briefly outline the proof of Proposition~\ref{prop_ac}. Multiplying the equation \eqref{waveequation} by
        \[(x-x_0)\cdot \nabla u\]
        and integrating over $D_T$, one can derive a discrete-time monotonicity-type relation: there exists $A>0$ and $T>T_0$, such that if $E(u[0])>A$, then $E(u[T])<E(u[0])/2$. This is natural in view of the global stability \eqref{globalstability} when the external force vanishes, and that when $E(u[0])>A$ the noise is a small term in comparison. As a result, we find that there is a bounded absorbing set in $\mathcal{H}$: for $n$ sufficiently large, we must have $\|\boldsymbol{u}_n\|_{\mathcal{H}}<CA$ for some $C=C(T,B_0)$. Similarly, if the initial data belongs to $\mathcal{H}^{\scriptscriptstyle 4/7}$, then there exists a bounded absorbing set in $\mathcal{H}^{\scriptscriptstyle 4/7}$. 
        
        Now given $\boldsymbol{u}\in \mathcal{H}$, we can decompose the solution into $u[t]=v[t]+w[t]$, where $v$ and $w$ are solutions to the equations
        \[\begin{cases}
            \boxempty v-a(x)\partial_t v=0,\\
            v|_{\partial D}=0,\\
            (v,\partial_t v)(0)=(u_0,u_1),
        \end{cases}\quad \text{and}\quad \begin{cases}
            \boxempty w-a(x)\partial_t w=\eta-u^3,\\
            w|_{\partial D}=0,\\
            (w,\partial_t w)(0)=(0,0),
        \end{cases}\]
        respectively. Then $v[t]$ decays exponentially in $\mathcal{H}$ by \eqref{globalstability}, and $w[t]$ possesses additional regularity in $\mathcal{H}^{\scriptscriptstyle 4/7}$ and hence is absorbed into a bounded set $Y_0$ in $\mathcal{H}^{\scriptscriptstyle 4/7}$.

     \begin{proof}[Sketched proof of asymptotic compactness]

     Employing Proposition~\ref{prop_ac}, one can check that   $Y:=\mathcal{A}(Y_0)$ is a compact invariant set for the RDS \eqref{wave-RDS}, see, e.g.~\cite[Proposition 2.2]{LWXZZ24}. Moreover, hypothesis \hyperlink{AC}{$\color{black}\mathbf{(AC)}$} follows for such $Y$, together with $V(\boldsymbol{u})=C(1+E(\boldsymbol{u}))$.
    \end{proof}

    \subsubsection{Verification of coupling condition}\label{Coupling condition} 
   
    Inspired by the previous works \cite{Shi-15,Shi-21}, the verification of coupling condition can be reduced to a stabilization property for the associated controlled system. The following result is demonstrated in \cite[Theorem 1.3]{LWXZZ24}. 
    \begin{proposition}\label{prop_control}
             Under the assumptions of the \hyperlink{MT}{\color{black}Main Theorem}, there exist constants $N\in\mathbb{N}$ and $C,d>0$, such that for any $\boldsymbol{u},\hat{\boldsymbol{u}}\in Y$ with $\|\boldsymbol{u}-\hat{\boldsymbol{u}}\|_{\mathcal{H}}\leq d$, and $h\in \supp(\mathscr{D}(\eta_0))$, there exists a control $\zeta=\zeta(\boldsymbol{u},\hat{\boldsymbol{u}},h) \in L^2(D_T)$ satisfying
        \begin{equation}\|S(\boldsymbol{u},h)-S(\hat{\boldsymbol{u}},h+\chi \mathscr{P}_N \zeta)\|_{\mathcal{H}}\le \|\boldsymbol{u}-\boldsymbol{\hat{u}}\|_{\mathcal{H}}/4. \label{coupling-condition1}
        \end{equation}
        Here $\mathscr{P}_N$ is the orthogonal projection from $L^2(D_T)$ to the subspace spanned by $\alpha_k^{\scriptscriptstyle T}(t)e_j(x)(j,k=1,\dots ,N)$. Moreover, the control can be estimated by
        \begin{equation}
            \|\zeta\|_{L^2(D_T)}\le C\|\boldsymbol{u}-\hat{\boldsymbol{u}}\|_{\mathcal{H}}.\label{coupling-condition2}
        \end{equation}
        \end{proposition}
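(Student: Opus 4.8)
\emph{Proof proposal.} The plan is to deduce Proposition~\ref{prop_control} from a controllability statement for the equation linearized along the reference trajectory $u:=S(\boldsymbol u,h)$, combined with a fixed-point argument that absorbs the Sobolev-critical cubic nonlinearity. Throughout I would use that $\boldsymbol u,\hat{\boldsymbol u}\in Y$ and that $Y$ is bounded in $\mathcal H^{\scriptscriptstyle 4/7}$ (with a bound growing with $B_0$), so that $u[\cdot]$ stays, on $[0,T]$, in a fixed bounded subset of $\mathcal H^{\scriptscriptstyle 4/7}\hookrightarrow L^\infty(D)$; in particular the potential $3u^2$ below is uniformly bounded and has uniform spatial regularity.

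\emph{Step 1: linearization and fixed point.} Writing $\hat u=u+w$ driven by $h+\chi\mathscr P_N\zeta$ and using $\hat u^3-u^3=3u^2w+3uw^2+w^3$, the difference solves
\begin{equation*}
\boxempty w+a(x)\partial_t w+3u^2w=\chi\mathscr P_N\zeta-\bigl(3uw^2+w^3\bigr),\qquad w[0]=\hat{\boldsymbol u}-\boldsymbol u,
\end{equation*}
the last term being superlinear in $w$ with an $L^\infty$-coefficient. Granting Step~2 --- a control with $\|\zeta\|_{L^2(D_T)}\le C\|w[0]\|_{\mathcal H}$ driving the \emph{linear} equation $\boxempty\phi+a\partial_t\phi+3u^2\phi=\chi\mathscr P_N\zeta$, $\phi[0]=w[0]$, to $\|\phi[T]\|_{\mathcal H}\le\tfrac18\|w[0]\|_{\mathcal H}$, together with continuous dependence --- I would run the usual contraction: on a ball of radius $\sim\|w[0]\|_{\mathcal H}$ in $C([0,T];\mathcal H)$ intersected with the Strichartz space for the 3D energy-critical wave equation, the map sending $\tilde w$ to the solution of the linearized equation forced by the Step~2 control (for data $w[0]$) together with the source $-(3u\tilde w^2+\tilde w^3)$ is a contraction once $\|w[0]\|_{\mathcal H}\le d$ is small, by Strichartz estimates and the superlinearity of the source. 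By linearity its fixed point $w$ satisfies $\|w[T]\|_{\mathcal H}\le\tfrac18\|w[0]\|_{\mathcal H}+C\|w[0]\|_{\mathcal H}^2\le\tfrac14\|w[0]\|_{\mathcal H}$ and $\|\zeta\|_{L^2(D_T)}\le C\|w[0]\|_{\mathcal H}$, which is~\eqref{coupling-condition1}--\eqref{coupling-condition2} with $\hat u=u+w$.

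\emph{Step 2: controllability of the linearized equation by the finite-mode input $\chi\mathscr P_N\zeta$.} This is the core. For a merely \emph{localized} control $\chi\zeta$ with $\zeta\in L^2(D_T)$, exact controllability to rest of $\boxempty\phi+a\partial_t\phi+V\phi=\chi\zeta$ (with $V=3u^2$) in time $T>\mathbf T$, together with the bound $\|\zeta\|_{L^2(D_T)}\le C\|\phi_0\|_{\mathcal H}$, follows from the classical HUM scheme: the $\Gamma$-condition on $\chi$ (Definition~\ref{Def-Gamma}) is exactly the geometric hypothesis of the multiplier method of Lions and Zuazua~\cite{Zuazua-90}, which yields the observability inequality $\|\psi[0]\|^2\lesssim\int_0^T\!\!\int_D|\chi\psi|^2$ for the backward adjoint $\boxempty\psi-a\partial_t\psi+V\psi=0$ up to a lower-order term, the latter being removed by a compactness--uniqueness argument resting on unique continuation for the damped wave operator with a bounded potential --- all constants taken uniform over the compact family of admissible $V$. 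The delicate point is to replace $\chi\zeta$ by $\chi\mathscr P_N\zeta$ while keeping the control size linear in $\|\phi_0\|_{\mathcal H}$ and the terminal residual below $\tfrac18\|\phi_0\|_{\mathcal H}$. Here I would split the state into spatial modes $\le M$ and $>M$: for $N$ large compared with $M$, the finite-mode input steers the low-mode block to within $\tfrac1{16}\|\phi_0\|_{\mathcal H}$ of rest --- because on $\{\chi>0\}$, which contains a $\Gamma$-domain, the low modes are asymptotically fully actuated by $\chi\,\mathrm{span}\{\alpha_k^{\scriptscriptstyle T}e_j:j,k\le N\}$ as $N\to\infty$ --- while the high-mode block is driven below $\tfrac1{16}\|\phi_0\|_{\mathcal H}$ by the exponential decay of the \emph{bare} damped wave over $T>\mathbf T$ (whose rate $\beta_0=\beta_0(D,a)$ does not depend on $B_0$) plus a cross-coupling correction that is small for $M$ large because $V=3u^2$, being built from $\mathcal H^{\scriptscriptstyle 4/7}$-bounded functions, has a negligible high-frequency tail. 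Since all these estimates are linear, the outcome is proportional to $\|\phi_0\|_{\mathcal H}$. The cutoff $M$, hence $N$, depends on $T$ and on the $\mathcal H^{\scriptscriptstyle 4/7}$-size of $Y$, i.e.\ on $B_0$ --- this is the source of $N=N(T,B_0)$.

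\emph{Main obstacle.} The crux is the finite-mode reduction in Step~2, and it is here that the absence of a smoothing effect bites: in parabolic problems the uncontrolled high modes are killed by the dissipation (Foias--Prodi), whereas the damped wave decays all modes only at the single uniform rate $\beta_0$, so one must verify both that the finite-dimensional input is genuinely effective on the part of the state it has to move and that the potential-induced coupling between that part and the uncontrolled tail is small --- a quantitative use of the regularity $Y\subset\mathcal H^{\scriptscriptstyle 4/7}$ --- and then keep both the control cost and the terminal residual proportional to $\|\phi_0\|_{\mathcal H}$ with constants uniform over the compact family of potentials. The sharpest purely analytic ingredient is the unique continuation property underlying the compactness--uniqueness step for the damped wave operator with an $L^\infty$ potential and $\Gamma$-type geometry. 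By contrast Step~1 is routine once the Strichartz framework for the energy-critical cubic term is in place.
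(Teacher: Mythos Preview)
Your proposal matches the approach the paper describes. Note that the paper does not prove Proposition~\ref{prop_control} here but cites \cite[Theorem~1.3]{LWXZZ24} and only sketches the strategy: HUM, observability via the multiplier method on the $\Gamma$-type geometry, unique continuation through Carleman estimates, and the frequency-splitting principle from \cite{DL-09} --- control the low-frequency part with the finite-dimensional input and let the high-frequency part decay under the damped dynamics. Your Step~1 (linearization plus a Strichartz contraction for the superlinear remainder) and Step~2 (HUM/observability for the full localized control, then a low/high frequency decomposition to pass to $\chi\mathscr P_N\zeta$) line up with this, and you correctly identify the finite-mode reduction as the crux.

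One caution on the details of Step~2: your high-mode argument leans on the bare damped-wave decay over a single step $T$, which by itself gives only the fixed factor $e^{-\beta_0 T}$ (not automatically $\le 1/16$ from $T>\mathbf T$ alone), and the spectral mixing induced by both the potential $3u^2$ and the cutoff $\chi$ couples the two blocks in both directions --- in particular $\chi\mathscr P_N\zeta$ itself has nontrivial high-mode content. The paper is explicit that ``the realization of this strategy is extremely intricate'' and points to the frequency-analysis machinery in \cite{DL-09,ADS-16,KX-23,Xiang-23,Xiang-24}. These are refinements of your sketch rather than structural gaps: your architecture is the one the paper endorses.
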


        The demonstration of the proposition on stabilization is a main challenge,  involving frequency analysis, unique continuation, observable inequalities, Carleman estimates and Hilbert uniqueness method. The degeneracy of the control, both in space and frequency, causes extra difficulties to the proof. A moral in \cite{DL-09} is that, “the energy of each scale of the control force depends (almost) only on the energy of the same scale in the states that one wants to control”. One exploits the control to treat the low-frequency terms, and leave the high-frequency terms to decay along the evolution. The idea is clear, whereas the realization of this strategy is extremely intricate. Such frequency analysis strategy has been exploited in earlier works \cite{ADS-16,KX-23,Xiang-23,Xiang-24}.
    
        \begin{proof}[Sketched proof of coupling condition] 
        Hypothesis \hyperlink{C}{$\color{black}\mathbf{(C)}$} follows from combining Proposition~\ref{prop_control} with the optimal coupling methods, see, e.g. \cite{Shi-15,LWXZZ24}. We refer the reader to \cite[Section 6.3]{LWXZZ24} for a rigorous proof.

        Intuitively, writing $\hat{\eta}_0=\eta_0+\chi\mathscr{P}_N\zeta(\boldsymbol{u},\hat{\boldsymbol{u}},\eta_0)$, by~\eqref{coupling-condition2}, for $\boldsymbol{u}$ and $\hat{\boldsymbol{u}}$ sufficiently close, $\hat{\eta}_0$ can be viewed of a small perturbation of $\eta_0$. Moreover, recall that the coefficients in the noise structure \eqref{noise-structure2} satisfy that $b_{jk}>0$ for $j,k=1,\dots ,N$, and the random variables $\theta_{jk}^{n}$ admit $C^1$ densities $\rho_{jk}$. Under such settings, it would imply that
        \begin{equation}
            \|\mathscr{D}(S(\hat{\boldsymbol{u}},\eta_0))-\mathscr{D}(S(\hat{\boldsymbol{u}},\hat{\eta_0}))\|_{\rm TV}\leq C\|\boldsymbol{u}-\hat{\boldsymbol{u}}\|_{\mathcal{H}}\label{coupling-condition3}
        \end{equation}
        for a universal constant $C>0$. Here $\|\cdot\|_{\rm TV}$ denotes the total variation norm of measures, see e.g.~\cite[Section 1.2]{KS-12}.

        Moreover, by \eqref{coupling-condition1}, performing the optimal coupling techniques, there exists a pair of $\mathcal{H}$-valued random variables $(R(\boldsymbol{u},\hat{\boldsymbol{u}}),R'(\boldsymbol{u},\hat{\boldsymbol{u}}))$ such that
        \begin{equation}
            \mathbb{P}(\|R(\boldsymbol{u},\hat{\boldsymbol{u}})-R'(\boldsymbol{u},\hat{\boldsymbol{u}})\|_{\mathcal{H}}>\|\boldsymbol{u}-\hat{\boldsymbol{u}}\|_{\mathcal{H}}/2)\leq 2\|\mathscr{D}(S(\hat{\boldsymbol{u}},\eta_0))-\mathscr{D}(S(\hat{\boldsymbol{u}},\hat{\eta_0}))\|_{\rm TV}.\label{coupling-condition4}
        \end{equation}

        Therefore, hypothesis \hyperlink{C}{$\color{black}\mathbf{(C)}$} follows from~\eqref{coupling-condition3} and \eqref{coupling-condition4} with $q=1/2$ and $g(r)=2Cr$.
        \end{proof}

    \subsection{Two more applications}\label{subsec_furthercomments}
    
    We present two more applications of our RDS approach, and comments on the ``local" constraint. The first is a toy model illustrating that the localness can be intrinsic in general. The second is 2D Navier--Stokes system with large viscosity, which indicates that it is possible to remove the localness when $Y=\mathcal{A}(=\supp(\mu_*))$. That is to say, the attracting compactum $Y$ is exactly the attainable set $\mathcal{A}$.

    \subsubsection{An elementary Markov chain}

    We provide the following example to show the localness is inherent in our setting. In such a simple case, the LDP can be computed directly.
    \begin{example}\label{toymodel}
        Let $(x_n)_{n\in \mathbb{N}_0}$ be a Markov chain on $\mathcal{X}:=\{0,1,2\}$ with initial condition $x_0=x$ and a non-irreducible transition probability defined as
        \begin{equation*}
            \pi(0,0)=\pi(2,1)=1,\quad \pi(1,0)=\pi(1,1)=1/2.
        \end{equation*}
        The unique invariant measure is $\delta_0$, and this RDS is exponential mixing:
        \begin{equation*}
            \|\mathscr{D}(x_n)-\delta_0\|_L^* \le 2^{-(n-1)} \quad \text{for any } x\in \mathcal{X}\text{ and }n\in\mathbb{N}.
        \end{equation*}

       By construction, this example fits into our RDS model with $z=0$, $\mathcal{A}=\{0\}$ and $Y=\{0,1\}$. The corresponding empirical measures $L_{n,x}(n\ge 1)$ are given by
        \begin{equation*}
            {L}_{n,x}=\begin{cases}
                \delta_0\quad &\text{for }x=0,\\
                (1-2^{-n})\delta_0+2^{-n}\delta_1\quad &\text{for }x=1,\\
               (1-2^{-(n-1)})\delta_0+2^{-(n-1)}\delta_1\quad &\text{for }x=2.
            \end{cases}
        \end{equation*}
        Thus for each $x\in \mathcal{X}$, the sequence $(L_{n,x})_{n\in \mathbb{N}}$ satisfies the LDP with good rate function $I_x$, where
        \begin{equation*}
            I_0(\sigma)=\begin{cases}
                0\quad &\text{for }\sigma=\delta_0,\\
                \infty &\text{otherwise},
            \end{cases}\quad \text{and}\quad
            I_1(\sigma)=I_2(\sigma) =\begin{cases}
                t\log 2\quad &\text{for }\sigma=t\delta_0+(1-t)\delta_1,\\
                \infty &\text{otherwise}.
                \end{cases}
        \end{equation*}

        We provide some comments on this example:        
        \begin{enumerate}
            \item[(1)] Noting that $I_0\neq I_1$, a uniform LDP cannot exist without localness. Intuitively, this defection can be attributed to that the state $1$ is not reachable from $0$, i.e.~$Y\not =\mathcal{A}$. In this example, if we consider the Markov process issued from each initial state $x\in \mathcal{X}$, then the LDP holds with a rate function $I_x$ depending on $x$. It would be more accurate if we consider the LDP with respect to each initial value, but yet more complicated.

            \item[(2)] For $V\in L(\mathcal{X})$, direct computations yield that $V\not\in\mathcal{V}$ whenever $V(1)-V(0)>\log 2$, where $\mathcal{V}$ is as in Definition~\ref{def_mathcalV}.  This reflects that the constraint $\Osc(V)<\delta$ for $V\in \mathcal{V}$ is necessary. Intuitively, when $V(1)-V(0)$ is sufficiently large, although the probability of $x_n=1$ is exponentially small, it still dominates other terms when multiplied by $e^{nV(1)}$. 
            
            \item[(3)] Whether $V\in \mathcal{V}$ or not only depends on the restriction of $V$ to $Y$. In this example, the conclusion follows from the fact that $x_1\in Y$ regardless of the initial condition. Consequently, the long-time behaviour is the same as that of the system initiating in $Y$. Meanwhile, in our general setting, although the process \eqref{RDS} does not necessarily enter the compactum $Y$ in hypothesis $(\mathbf{AC})$, this property remains valid; see Corollary~\ref{VrestricY}.
            
        \end{enumerate}
    \end{example}

    \subsubsection{2D Navier--Stokes system}\label{subsubsec_NS}
   
   We further consider a 2D incompressible Navier--Stokes system with space-time localized noise, which is proved to be exponential mixing by Shirikyan \cite{Shi-15}.  The system under consideration reads
    \begin{equation}
        \begin{cases}
            \partial_t u+u\cdot \nabla u-\nu \Delta u+\nabla p=\eta(t,x),\quad\dvg u=0,\quad x\in D,\\
            u|_{\partial D}=0,\\
            u(0)=u_0,
        \end{cases}\label{NSsystem}
    \end{equation}
    where $D$ is a smooth bounded domain in $\mathbb{R}^2$, the constant $\nu>0$ is the viscosity, $u=(u^1,u^2)$ is the velocity field and $p$ is the pressure. The phase space for equation~\eqref{NSsystem} is specified as 
    \[H:=\{u\in L^2(D;\mathbb{R}^2): \dvg u=0\text{ in }D,\ u\cdot n=0\text{ on }\partial D\}.\]
    
    \vspace{1mm}    
    The localized noise structure is specified as follows. Fix any open set $Q\subset (0,1)\times D$ and let $(\varphi_j)_{j\in \mathbb{N}}\subset H^1(Q;\mathbb{R}^2)$ be an orthonormal basis in $L^2(Q;\mathbb{R}^2)$.     
    \begin{enumerate}
    \item[$\mathbf{(S1')}$] ({\bf Localized structure}) {\it Let $\chi(\cdot,\cdot)\in C^\infty_0(Q)$ be a non-zero function.
    }
    \end{enumerate}

    \begin{enumerate}
    \item[$\mathbf{(S2')}$]  
    {\it 
    Let $(\rho_{j})_{j\in \mathbb{N}}$ be a sequence of probability density functions supported by $[-1,1]$, which is $C^1$ and satisfies  $\rho_{j}(0)>0$. }
    \end{enumerate}

    \vspace{1mm}
    The noise $\eta(t,x)$ in \eqref{NSsystem} is of the form
   \begin{equation*}
    \begin{aligned}
    &\eta(t,x)= \eta_n(t-n,x)\quad \text{for } t\in (n,n+1)\text{ and }n\in\mathbb{N}_0,\\ 
    &\displaystyle\eta_n(t,x)=\chi(t,x)\sum_{j\in\mathbb{N}}b_{j}\theta^{n}_{j}\varphi_j(t,x)\quad \text{for }t\in (0,1),
    \end{aligned}
    \end{equation*}
    where $\theta_{j}^{n}$ are independent random variables such that $(\theta_{j}^{n})_{n\in\mathbb{N}_0}$ has a common density $\rho_{j}$, and  $(b_j)_{j\in\mathbb{N}}$ is a sequence of non-negative numbers. By construction, $\eta_n$ are i.i.d.~$L^2(Q)$-valued random variables.

    As the well-posedness of 2D Navier-Stokes system is well-known, the corresponding time-1 resolvent operator $S$ for \eqref{NSsystem} is well-defined and locally Lipschitz. That is,
    \begin{equation*}
        S\colon H\times \supp(\mathscr{D}(\eta_0))\to H,\quad S(u(0),\eta_0)=u(1).
    \end{equation*}
    Then equation \eqref{NSsystem} defines an RDS by the relations
    \begin{equation}
        u_n=S(u_{n-1},\eta_{n-1})=u(n),\quad u_0=u.\label{NS-RDS}
    \end{equation}
    
    \vspace{3mm}
    With the above settings, another application of Theorem~\ref{Thm LDP} for the Navier-Stokes equation \eqref{NSsystem}, with the localness in~\eqref{level2XR_lowerbound} removed, is contained in the following. 
    
     \begin{proposition}\label{NSlevel2}

        Under the settings $\mathbf{(S1')}$ and $\mathbf{(S2')}$, for any ${B}_0>0$, there exists a constant  $\nu_0=\nu_0(B_0)>0$ such that if 
    \begin{equation*}
    \nu\ge \nu_0,\quad \sum_{j\in\mathbb{N}}b_{j}\|\varphi_j\|_{H^1(Q)} \leq {B}_0,\quad\text{and}\quad  b_{j}\neq0\ \text{ for } j\in\mathbb{N},
    \end{equation*}
   then there exists a good rate function $\mathcal{I}\colon\mathcal{P}(H)\rightarrow[0,\infty]$ satisfying $\mathcal{I}(\sigma)=\infty$ for any $\sigma\notin\mathcal{P}(\mathcal{A}(\{0\}))$, such that for any closed subset $F\subset\mathcal{P}(H)$ and open subset $G\subset\mathcal{P}(H)$,
        \begin{align*}
            \limsup_{n\rightarrow\infty} \frac{1}{n}\log \mathbb{P}_u\left(\frac{1}{n}(\delta_{u_1}+\cdots +\delta_{u_n})\in F\right)&\le -\inf\{\mathcal{I}(\sigma):\sigma\in F\},\\       
            \liminf_{n\rightarrow\infty}\frac{1}{n}\log \mathbb{P}_u\left(\frac{1}{n}(\delta_{u_1}+\cdots +\delta_{u_n})\in G\right)&\ge -\inf\{\mathcal{I}(\sigma):\sigma\in G\}.
        \end{align*}
        Moreover, the above convergences hold locally uniformly with respect to $u\in H$.
    \end{proposition}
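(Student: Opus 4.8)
The plan is to realize the Navier--Stokes RDS \eqref{NS-RDS} as an instance of the abstract system of Section~\ref{subsec_mixing} for which, thanks to the largeness of $\nu$, the attracting compactum coincides with the attainable set from the origin, and then to upgrade the localized lower bound \eqref{level2XR_lowerbound} of Theorem~\ref{Thm LDP} to the full one. First I would verify \hyperlink{AC}{$\mathbf{(AC)}$}, \hyperlink{I}{$\mathbf{(I)}$} and \hyperlink{C}{$\mathbf{(C)}$} for \eqref{NS-RDS}, following the scheme of Shirikyan~\cite{Shi-15} for this very system, transcribed into the present RDS formalism as was done for waves in \cite{LWXZZ24}: the energy balance, the Ladyzhenskaya inequality and the Poincar\'e inequality give a bounded absorbing ball in $H$ and, by parabolic smoothing, a bounded absorbing ball in $H^1(D)$, which is relatively compact in $H$; irreducibility holds with $z=0$, since $\rho_j(0)>0$ gives $0\in\mathcal E=\supp(\mathscr{D}(\eta_0))$, the unforced equation with $\nu\ge\nu_0$ drives any state to $0$ exponentially, and a positive-density-near-$0$ argument yields \eqref{irreducibility} uniformly over the compact set, while $S(0,0)=0$ supplies the fixed-point requirement of Theorem~\ref{Thm LDP}; the coupling condition on $Y$ follows from the full non-degeneracy $b_j\neq0$ through the optimal-coupling construction of \cite{Shi-15}.

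The crux is to take $Y=\mathcal A(\{0\})$ in \hyperlink{AC}{$\mathbf{(AC)}$}. For this I would establish global exponential stability of the controlled equation: if $u,\tilde u$ solve \eqref{NSsystem} on $(0,1)$ with the same force $\eta$, then $w:=u-\tilde u$ satisfies $\tfrac12\tfrac{d}{dt}\|w\|^2+\nu\|\nabla w\|^2\le C\|w\|\,\|\nabla w\|\,\|\nabla u\|$; absorbing the cross term, using $\|\nabla w\|^2\ge\lambda_1\|w\|^2$, and invoking the enstrophy bound $\nu\int_0^1\|\nabla u\|^2\,dt\le C(1+\|u(0)\|^2+B_0^2)$ yields, once $\nu\ge\nu_0(B_0)$ and $u(0)$ lies in the absorbing ball, $\|w(1)\|\le e^{-c\nu}\|w(0)\|$. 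Combined with a standard absorb-then-contract argument this gives $\|S_n(u;\boldsymbol{\zeta})-S_n(0;\boldsymbol{\zeta})\|_H\le C(1+\|u\|_H)e^{-\kappa n}$ for every $u$ and every $\boldsymbol{\zeta}\in\mathcal E^{\mathbb N_0}$; since $S_n(0;\boldsymbol{\zeta})\in\mathcal A_n(\{0\})\subset\mathcal A(\{0\})$, hypothesis \eqref{asymptotic compactness} holds with $Y=\mathcal A(\{0\})$ and $V(u)=C(1+\|u\|_H)$, and $\mathcal A(\{0\})$ is compact because $\bigcup_n\mathcal A_n(\{0\})$ lies in the relatively compact $H^1$-absorbing ball. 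By Theorem~\ref{Thm Exponential mixing}, $\supp(\mu_*)=\mathcal A(\{0\})=Y$, so the good rate function furnished by Theorem~\ref{Thm LDP} satisfies $\mathcal I(\sigma)=\infty$ for $\sigma\notin\mathcal P(\mathcal A(\{0\}))$, as claimed, and the upper bound together with the local uniformity in $u$ are exactly those of Theorem~\ref{Thm LDP} (using that $V$ is bounded on bounded sets).

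It then remains to remove the restriction to $\mathcal W$ in \eqref{level2XR_lowerbound}. This reduces to showing that $\mathcal W$ (equivalently $\mathcal W_R$, by Theorem~\ref{Thm LDP}(c)) is $\mathcal I$-dense in $\{\sigma\in\mathcal P(Y):\mathcal I(\sigma)<\infty\}$, so that $\inf\{\mathcal I(\sigma):\sigma\in G\cap\mathcal W\}=\inf\{\mathcal I(\sigma):\sigma\in G\}$ for every open $G$. Here is where the largeness of $\nu$ is essential in a second way: the strong, noise-independent contraction $q\le e^{-c\nu}$ makes the weighted transfer operators $\mathfrak P^Vf:=P(e^Vf)$ admit a spectral gap for a rich family of $V\in L_b(X_R)$ — in contrast with Example~\ref{toymodel}, where the fixed ratio $q=1/2$ forces a genuine oscillation constraint — so that the non-triviality range of Theorem~\ref{Thm LDP}(c) is wide enough for the set of equilibrium states $\{\sigma_V\}$ to be $\mathcal I$-dense in the domain of $\mathcal I$, via a convex-analysis/approximation argument in the spirit of \cite{JNPS-18}. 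The full lower bound follows, and since $Y=\mathcal A$ is a single compact attracting set the upgrade is compatible with the locally uniform convergence in $u$.

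The main obstacle will be this last step: making the $\nu$-dependence in Theorem~\ref{Thm LDP}(c) quantitative, i.e.\ showing that a sufficiently strong deterministic contraction really does absorb arbitrarily large oscillations of $V$ so that enough equilibrium states become available, and then carrying out the weak-$*$ density argument for the rate function on the infinite-dimensional but compact space $\mathcal P(Y)$. A secondary difficulty is to make rigorous the compactness of $\mathcal A(\{0\})$ and the uniformity of all the energy and stability estimates with respect to the initial datum on bounded subsets of $H$, on which the locally uniform convergence in the conclusion ultimately rests.
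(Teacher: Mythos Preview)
Your verification of \hyperlink{AC}{$\mathbf{(AC)}$}, \hyperlink{I}{$\mathbf{(I)}$}, \hyperlink{C}{$\mathbf{(C)}$} and the identification $Y=\mathcal A(\{0\})$ via the contraction $\|S(u,\zeta)-S(u',\zeta)\|_H\le\beta_2\|u-u'\|_H$ for $\nu\ge\nu_0$ is exactly the paper's route (the paper quotes \cite[Proposition~5.9]{MN-20} for the contraction and \cite[Proposition~2.6]{Shi-15} for the coupling, and notes that parabolic smoothing sends an absorbing $H$-ball to a compact invariant set $X$, so the whole argument may be run there). The upper bound, the formula $\mathcal I(\sigma)=\infty$ off $\mathcal P(\mathcal A(\{0\}))$, and the local uniformity in $u$ then come directly from Theorem~\ref{Thm LDP}, as you say.

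The divergence is in the last step, and there your proposal has a genuine gap. You plan to remove the restriction to $\mathcal W$ by arguing that large $\nu$ enlarges the spectral-gap range of $\mathfrak P^V$ enough for $\mathcal W$ to become $\mathcal I$-dense in $\{\mathcal I<\infty\}$, and you flag this as the main obstacle. But you do not supply the mechanism; a density argument of this type on $\mathcal P(Y)$ is delicate and is not what the paper does. The paper's route is shorter and avoids density altogether: once $Y=\mathcal A$, it suffices (by the original Kifer criterion, Remark~\ref{Rmk_Kifer}) to show $L(X)\subset\mathcal V$. This is obtained by combining two observations: (i) on the compactum $Y=\mathcal A$ one has $L(Y)\subset\mathcal V(Y)$, a fact the paper imports from \cite{PX22}; and (ii) by Corollary~\ref{VrestricY}, $V\in\mathcal V$ iff $V|_Y\in\mathcal V(Y)$.

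Your intuition is, in fact, salvageable within the paper's own machinery, and would give a self-contained alternative to citing \cite{PX22}: the deterministic contraction you derive means hypothesis \hyperlink{C}{$\mathbf{(C)}$} holds with $q=\beta_2$ and $g\equiv0$, so that $\delta_1=+\infty$ in \eqref{def_delta1} and Lemma~\ref{lemma_uf} yields the uniform Feller property for \emph{every} $V\in L(Y)$; since moreover $Y=\mathcal A$, the extra constraints of Theorem~\ref{Thm Feynman-Kac_Y} (concentration and exponential boundedness, which are the source of the oscillation bound $\Osc(V)<\delta$) are vacuous, and Theorem~\ref{Thm Feynman-Kac} alone gives Proposition~\ref{Prop Feynman-Kac stability} for all $V\in L(Y)$, hence $L(Y)\subset\mathcal V(Y)$ by Lemma~\ref{nontrivialonY}. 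This is the precise way the contraction ``absorbs arbitrarily large oscillations'', and it leads directly to $L(X)\subset\mathcal V$ via Corollary~\ref{VrestricY}, bypassing any $\mathcal I$-density argument.
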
   

    \begin{proof}

    Applying our abstract results, it suffices to verify the assumptions of Theorem~\ref{Thm LDP}, which implies the level-2 local LDP, and then to remove the localness restriction. In fact, when $\nu$ is sufficiently large, we can show that hypothesis $(\mathbf{AC})$ holds with $Y=\mathcal{A}(\{0\})$. As a result, we have $\mathcal{I}(\sigma)=\infty$ for any $\sigma\not \in \mathcal{P}(\mathcal{A}(\{0\}))$ by \eqref{D(I)subsetP(Y)}.

    By \cite[Proposition 2.1.21]{KS-12}, for any $\nu\ge 1$, there exist universal constants $\beta_1\in (0,1)$ and $C_1>0$ such that
        \begin{equation}
            \|S(u,\zeta)\|_{H}\le \beta_1\|u\|_{H}+C_1\|\zeta\|_{L^2(Q)}\quad\text{for }u\in H\text{ and }\zeta\in L^2(Q).\label{NSdisspation}
        \end{equation}
        Therefore, there exists $R=R(B_0)$ such that $B_H(R)$ is invariant for the RDS~\eqref{NS-RDS}. Moreover, one can find $n_0=n_0(\|u_0\|_{L^2(D)})\in \mathbb{N}$, such that $u_n\in B_H(R)$ for any $n\ge n_0$. The smoothing effect of the parabolic system implies that the image of $B_H(R)$, denoted by
        \[X:=S(B_H(R)\times \supp(\mathscr{D}(\eta_0)))\]
        is a compact invariant subset of $H$. Thus we may reduce the phase space to $X$ and only consider initial data $u\in X$. As $X$ is compact (and hence bounded), there is no need to further concerning each bounded invariant set $X_R$. We can define $\Lambda$, $I$, $\mathcal{V}$, $\mathcal{W}$ on $X$ as in Section~\ref{subsec_mixing}.
         
    \begin{itemize}
        \item [(1)] {\it Asymptotic compactness.} By \cite[Proposition 5.9]{MN-20}, there exists $\nu_0=\nu_0(B_0)>0$ such that for $\nu\ge \nu_0$, there exists $\beta_2\in (0,1)$ such that 
        \begin{equation}\label{NS-AC2}
        \|S(u,\zeta)-S(u',\zeta
        )\|_H\le \beta_2\|u-u'\|_H\quad \text{for }u,u'\in X\text{ and }\zeta\in \supp(\mathscr{D}(\eta_0)).
        \end{equation}
        In particular, iterating~\eqref{NS-AC2} yields that for any $u\in X$, almost surely
        \begin{equation*}
            \dist_{H}(u_n,\mathcal{A}(\{0\}))\leq  \beta_2^{n}\|u\|_{H}.
        \end{equation*}
       This completes the verification of hypothesis \hyperlink{AC}{$\color{black}\mathbf{(AC)}$} with $Y=\mathcal{A}:=\mathcal{A}(\{0\})$.

        \item [(2)] {\it Irreducibility.} As in the verification of wave system, the dissipation \eqref{NSdisspation} and $0\in \supp(\mathscr{D}(\eta_0))$ imply hypothesis \hyperlink{I}{$\color{black}\mathbf{(I)}$} with $z=0$. Moreover, $z$ is also a fixed point of the RDS \eqref{NS-RDS}, i.e.~$S(0,0)=0$. See \cite[Section 4.3]{Shi-15} for a rigorous proof.

        \item [(3)] {\it Coupling condition.}  Hypothesis \hyperlink{C}{$\color{black}\mathbf{(C)}$} is a direct consequence of \cite[Proposition 2.6]{Shi-15} with $q=1/2$ and $g(r)=Cr$ in the present setting.
        \end{itemize}

        \begin{itemize}
        \item [(4)] {\it Removing localness. } According to the original Kifer's criterion \cite[Theorem 2.1]{Kifer-90} (see also Remark~\ref{Rmk_Kifer}), it suffices to prove that $L(X)\subset \mathcal{V}$. To this end, we only need to combine the following two observations:
        \begin{itemize}
            \item [(i)] The level-2 LDP for equation~\eqref{NSsystem} on the compactum $Y=\mathcal{A}$ is derived in \cite{PX22}, where it is proved that $L(Y)\subset \mathcal{V}(Y)$. See Definition~\ref{def_V(Y)} for  $\mathcal{V}(Y)$.
            
            \item [(ii)] By Corollary~\ref{VrestricY}, for $V\in L(X)$, we have $V\in \mathcal{V}$ if and only if $V|_Y \in \mathcal{V}(Y)$.
        \end{itemize} 
    \end{itemize}

    Collecting (1)-(4), the proof is now complete.   \end{proof}

    We end this section with some comments on Proposition~\ref{NSlevel2}. The corresponding level-1 LDP for any test function $f\in C_b(H)$ immediately follows from the contraction principle.  In addition, from our arguments, one can see that the local level-2 LDP is satisfied for equation~\eqref{NSsystem} with any viscosity $\nu>0$.  It remains open whether the localness can be removed for any $\nu>0$.  We also mention that in \cite[Section 5.5.1]{MN-20}, Martirosyan and Nersesyan prove a similar result for the 2D Navier-Stokes system with large viscosity perturbed a bounded kick force.

    \section{Proof of LDP in the abstract setting}\label{sec_proofofabstractldp}

    In this section we lay out the proof of Theorem~\ref{Thm LDP} for our RDS approach. We first establish a new variant of Kifer's criterion in Section~\ref{subsubsec_Kifer}, based on asymptotic exponential tightness. Then in  Section~\ref{subsec_prooflevel2LDP} we apply this criterion to finish the proof of Theorem~\ref{Thm LDP}. Figure~\ref{fig2} illustrates the interconnections among various ingredients along the sequel.

\setlength{\abovecaptionskip}{0.3cm}

    \begin{figure}[H]
\centering
\begin{tikzpicture}[node distance=2cm]
\footnotesize {
\node (AET) [idea] {\hyperlink{AET}{$\color{black}\mathbf{(AET)}$}};
\node (localKifer) [process, right of=AET, xshift=2cm] {Kifer's Criterion\\
(Thm.~\ref{Thm Kifer})};
\node (LDP) [process, right of=localKifer, xshift=3cm] { \hyperlink{b}{(b)} Local LDP \\
(Sec.~\ref{subsubsec_(b)})};
\node (Vdelta) [process, below of=LDP] {\hyperlink{c}{(c)} Non-triviality \\
(Sec.~\ref{subsubsec_(c)})};
\node (restriction) [process, below of=localKifer] {Reduction to $Y$\\(Cor.~\ref{VrestricY})};
\node (AC) [idea, left of=restriction, xshift=-2cm] {\hyperlink{AC}{$\color{black}\mathbf{(AC)}$}};
\node (FK) [idea, below of=AC] {\hyperlink{I}{$\color{black}\mathbf{(I)}$} \& \hyperlink{C}{$\color{black}\mathbf{(C)}$}};
\node (VdeltaY) [process, right of=FK, xshift=2cm] {Non-triviality on $Y$\\(Lem.~\ref{nontrivialonY})};
\node (Ientropy) [process, below of=Vdelta] { \hyperlink{a}{(a)} Rate Function\\
(Sec.~\ref{subsubsec_(a)})};
\node (Thm) [above of=LDP, yshift=-1cm] {Theorem~\ref{Thm LDP}};

\node (pt1) [point, right of=VdeltaY, xshift=0.2cm]{};
\node (pt2) [point, right of=restriction, xshift=0.2cm]{};

\node (pt3) [point, below of=VdeltaY, yshift=0.5cm]{};
\node (pt4) [point, below of=Ientropy, yshift=0.5cm]{};

\draw [arrow](AET) -- (localKifer);
\draw [arrow](localKifer) -- (LDP);
\draw [arrow](AC) -- (restriction);
\draw [dasharrow](AC) -- (AET);
\draw [arrow](FK) -- (VdeltaY);
\draw [arrow](restriction) -- (Vdelta);

\draw[thick] (VdeltaY) -- (pt1);
\draw[thick] (pt1) -- (pt2);

\draw[thick] (Ientropy) -- (pt4);

\draw[thick] (pt4) -- (pt3);
\draw[arrow] (pt3) -- (VdeltaY);

\draw[dashed] (6.8,1.5) -- (11.2,1.5);
\draw[dashed] (6.8,1.5) -- (6.8,-5);
\draw[dashed] (11.2,1.5) -- (11.2,-5);
\draw[dashed] (6.8,-5) -- (11.2,-5);

}
\end{tikzpicture}
\caption{}
\label{fig2}
\end{figure}

    Let us overview some key points along the demonstration.

    \begin{itemize}
        \item[\tiny$\bullet$] \textbf{\hyperlink{AET}{\color{black}$(\mathbf{AET})$}-based Kifer's criterion.} Aiming to study the LDP for dynamical systems and Markov processes, Kifer \cite{Kifer-90} puts forward a sufficient criterion for the LDP of random probability measures; see also Remark~\ref{Rmk_Kifer}. We will establish a variant of this result, which leads to the local LDP, i.e.~statement \hyperlink{b}{(b)} of Theorem~\ref{Thm LDP}. \\
        Indeed, for decades researchers have exploited the concept of exponential tightness. However, for hyperbolic cases the lack of compactness represents new challenges. As this condition seems invalid in our setting, we seek for a new sufficient condition, called asymptotic exponential tightness.

        \item[\tiny$\bullet$] \textbf{Reduction to compactum $Y$ via \hyperlink{AC}{\color{black}$(\mathbf{AC})$}.} In the light of asymptotic compactness, we can convert the judgement of $V\in \mathcal{V}_R$ to investigating its restriction to $Y$. This is the first step towards the non-triviality, i.e.~statement \hyperlink{c}{(c)} of Theorem~\ref{Thm LDP}.\\
         In fact, we define $\mathcal{V}(Y)$ as an analogue of $\mathcal{V}_R$. Relying on estimates for the pressure function, we have that $V\in \mathcal{V}_R$ if and only if $V|_Y\in \mathcal{V}(Y)$.

        \item[\tiny$\bullet$] \textbf{Non-triviality on $Y$.} To conclude the non-triviality, in the second step we define 
        \[\mathcal{V}_{L,\delta}(Y):=\{V\in L(Y):\Lip(V)<L,\ \Osc(V)<\delta\},\]
        and it suffices to show $\mathcal{V}_{L,\delta}(Y)\subset \mathcal{V}(Y)$. This issue is closely related to the long-time behaviour of Feynman--Kac semigroups; see Proposition~\ref{Prop Feynman-Kac stability}. Invoking ideas originating from \cite{DV-75}, we can verify statement \hyperlink{a}{(a)} of the theorem and combine it with this proposition to conclude the proof. It is worth noting that Proposition~\ref{Prop Feynman-Kac stability} requires the hypotheses \hyperlink{I}{$\color{black}\mathbf{(I)}$} and \hyperlink{C}{$\color{black}\mathbf{(C)}$}, and follows from multiplicative ergodic theorems introduced in \cite{JNPS-15,MN-20}.
    \end{itemize}

    \subsection{(AET)-based Kifer's criterion}\label{subsubsec_Kifer}

    We present a new variant of Kifer's criterion, based on asymptotic exponential tightness. It is the basis of this paper, and applies to our RDS model to yield the LDP in desire, as illustrated in Figure~\ref{fig1}. In this subsection, we temporarily forget the RDS setting and consider random probability measures.
    
    Let $\mathfrak{X}$ be a Polish space, and $L_\theta\in \mathcal{P}(\mathfrak{X})(\theta\in \Theta)$ a net of random probability measures, where $(\Theta,\prec)$ is a directed set. The scale $r_\theta\in (0,\infty)$ satisfies $\lim_{\theta\in \Theta} r_\theta=\infty$. We make the following hypothesis in compensation for the lack of compactness:
    \begin{itemize}
        \item[\hypertarget{AET}{$\mathbf{(AET)}$}]{\bf Asymptotic exponential tightness.} For any $l>0$, there exists a compact subset $\mathcal{K}_l\subset \mathcal{P}(\mathfrak{X})$, such that whenever $r>0$, we have
        \begin{equation}
            \limsup_{\theta\in \Theta} \frac{1}{r_\theta} \log \mathbb{P}(L_\theta\in B_{\mathcal{P}(\mathfrak{X})}(\mathcal{K}_l,r)^C)\le -l.\label{AET}
        \end{equation}
    \end{itemize}

    \begin{remark}\label{Rmk_AETvsET}
        In the introduction we stated the asymptotic exponential compactness in the context of a sequence of probability measures $\mu_n$. This is in fact compatible with the above definition, except that the sequence is promoted to a net, and $\mu_\theta\in \mathcal{P}(\mathcal{P}(\mathfrak{X}))$ is taken as the law of $L_\theta$. 
        
        It is worth mentioning that  hypothesis \hyperlink{AET}{$\color{black}\mathbf{(AET)}$} cannot imply exponential tightness in general. For example, let $\mathfrak{X}=l^1(\mathbb{N})$, $\Theta=\mathbb{N}\times \mathbb{N}$ with partial order $(n_1,m_1)\prec (n_2,m_2)$ if and only if $n_1\le n_2$, and consider the net $\mu_{n,m}:=\delta_{e_m/n}\in \mathcal{P}(\mathfrak{X})$, where $e_m$ is the $m$-th unit vector. 
    \end{remark}
        
    As in Section~\ref{subsec_ldpstatement}, we define the pressure function $\Lambda\colon C_b(\mathfrak{X})\to \mathbb{R}$ by \eqref{pressuredefinition}, and the rate function $I\colon \mathcal{P}(\mathfrak{X})\rightarrow[0,\infty]$ by \eqref{ratedefinition}. Meanwhile, $\mathcal{V}\subset L_b(\mathfrak{X})$ and $\mathcal{W}\subset \mathcal{P}(\mathfrak{X})$ are as in Definition~\ref{def_mathcalV}. With these preparations at hand, we now state the $(\mathbf{AET})$-based Kifer's criterion.
       
    \begin{theorem}\label{Thm Kifer}
        Suppose that hypothesis \hyperlink{AET}{$\color{black}\mathbf{(AET)}$} is satisfied. Then $I$ is a good rate function, and the local LDP holds in the following sense: for any closed set $F\subset \mathcal{P}(\mathfrak{X})$ and open set $G\subset \mathcal{P}(\mathfrak{X})$,
        \begin{align}
            \limsup_{\theta\in \Theta}\frac{1}{r_\theta}\log \mathbb{P}(L_{\theta}\in F)&\le -\inf\{I(\sigma):\sigma\in F\};\label{Kifer-upperbound}\\
            \liminf_{\theta\in \Theta}\frac{1}{r_\theta}\log \mathbb{P}(L_{\theta}\in G)&\ge -\inf\{I(\sigma):\sigma\in G\cap \mathcal{W}\}.\label{Kifer-lowerbound}
        \end{align}
    \end{theorem}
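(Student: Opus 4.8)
The plan is to follow the structure of Kifer's original argument \cite{Kifer-90}, replacing every use of exponential tightness by the weaker hypothesis \hyperlink{AET}{$\mathbf{(AET)}$}, and exploiting the fact that $\mathcal{P}(\mathfrak{X})$ is itself a Polish space on which the net $L_\theta$ induces a net of laws $\mu_\theta\in\mathcal{P}(\mathcal{P}(\mathfrak{X}))$. The core observation is that $\langle V,\cdot\rangle\colon\mathcal{P}(\mathfrak{X})\to\mathbb{R}$ is a bounded continuous (indeed affine) function on $\mathcal{P}(\mathfrak{X})$ for each $V\in C_b(\mathfrak{X})$, so the pressure $\Lambda(V)=\limsup_\theta \frac{1}{r_\theta}\log\int e^{r_\theta\langle V,\cdot\rangle}\,d\mu_\theta$ is exactly the ``upper'' logarithmic moment generating functional for $\mu_\theta$ tested against the continuous affine functionals on $\mathcal{P}(\mathfrak{X})$. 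The rate function $I$ from \eqref{ratedefinition} is its Legendre transform, and the duality \eqref{inverseLegendre} recovers $\Lambda$ from $I$.

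First I would prove the upper bound \eqref{Kifer-upperbound}. By a standard Chebyshev/Markov estimate, for any $V\in C_b(\mathfrak{X})$ and any closed $F$,
\[
\limsup_{\theta\in\Theta}\frac{1}{r_\theta}\log\mathbb{P}(L_\theta\in F)\le \sup_{\sigma\in F}\langle V,\sigma\rangle-\Lambda(V),
\]
provided one can localize: the usual trick is to write, for a compact $\mathcal{K}\subset\mathcal{P}(\mathfrak{X})$ and $r>0$, $\mathbb{P}(L_\theta\in F)\le \mathbb{P}(L_\theta\in F\cap \overline{B(\mathcal{K},r)})+\mathbb{P}(L_\theta\in B(\mathcal{K},r)^C)$, and use \eqref{AET} to make the second term negligible at exponential scale $l$, choosing $l$ large. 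On the compact piece $F\cap\overline{B(\mathcal{K}_l,r)}$ one runs the finite-cover argument of the Gärtner--Ellis/Kifer proof: cover it by finitely many sets on each of which some affine functional $\langle V_i,\cdot\rangle$ nearly attains $\inf_F I$, optimize over $V\in C_b(\mathfrak{X})$, and take $r\downarrow 0$ using lower semicontinuity of $I$ together with compactness of $\overline{B(\mathcal{K}_l,r)}$ shrinking to $\mathcal{K}_l$. Goodness of $I$ follows along the way: the sublevel sets $\{I\le a\}$ are closed (lower semicontinuity) and, by the same estimate applied with $F=\{I\le a\}^C$ reasoning reversed, are contained in arbitrarily small neighborhoods of the compacta $\mathcal{K}_l$ for $l>a$, hence totally bounded, hence compact in the Polish space $\mathcal{P}(\mathfrak{X})$.

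Next the local lower bound \eqref{Kifer-lowerbound}. Here the only candidates we can reach are the equilibrium states: given $\sigma\in G\cap\mathcal{W}$, by definition $\sigma=\sigma_V$ for some $V\in\mathcal{V}$, so the limit in \eqref{pressuredefinition} exists and $\sigma_V$ is the unique maximizer of $\langle V,\cdot\rangle-I(\cdot)$. One performs the classical change of measure: let $d\widetilde{\mu}_\theta=e^{r_\theta\langle V,\cdot\rangle}d\mu_\theta/\int e^{r_\theta\langle V,\cdot\rangle}d\mu_\theta$ on $\mathcal{P}(\mathfrak{X})$, and estimate
\[
\mathbb{P}(L_\theta\in G)\ge e^{-r_\theta\sup_G\langle V,\cdot\rangle}\Big(\textstyle\int e^{r_\theta\langle V,\cdot\rangle}d\mu_\theta\Big)\,\widetilde{\mu}_\theta(G).
\]
The first two factors contribute $-\sup_G\langle V,\cdot\rangle+\Lambda(V)$ at exponential scale. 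For the tilted measures, I would show $\widetilde{\mu}_\theta$ concentrates near $\sigma_V$: any weak-* limit point of $\widetilde{\mu}_\theta$ is supported on maximizers of $\langle V,\cdot\rangle-I(\cdot)$, which is the singleton $\{\sigma_V\}$ by uniqueness; tightness of $\{\widetilde{\mu}_\theta\}$ needed to extract such limit points again comes from \eqref{AET} (the tilt by a bounded functional $V$ distorts probabilities by at most $e^{2r_\theta\|V\|_\infty}$, so the asymptotic-tightness estimate survives once $l$ is taken $>2\|V\|_\infty+l'$). Since $G$ is open and $\sigma_V\in G$, this gives $\widetilde{\mu}_\theta(G)\to 1$, so $\liminf\frac{1}{r_\theta}\log\mathbb{P}(L_\theta\in G)\ge \Lambda(V)-\sup_G\langle V,\cdot\rangle$. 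Shrinking $G$ to a small neighborhood of $\sigma_V$ and using continuity of $\langle V,\cdot\rangle$ replaces $\sup_G\langle V,\cdot\rangle$ by $\langle V,\sigma_V\rangle$, yielding $\Lambda(V)-\langle V,\sigma_V\rangle=-I(\sigma_V)$ by the equilibrium relation; a routine covering of a general open $G$ by such small neighborhoods of points of $G\cap\mathcal{W}$ completes \eqref{Kifer-lowerbound}.

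The main obstacle I anticipate is the replacement of ``concentration on a fixed compact set'' by ``concentration on an arbitrarily thin neighborhood of a compact set'' throughout both bounds — in particular making sure that the finite-cover argument for the upper bound still closes when $\overline{B(\mathcal{K}_l,r)}$ is only compact for each fixed $r$ and one must send $r\downarrow0$ while simultaneously optimizing over the (infinite-dimensional) family $C_b(\mathfrak{X})$ of test functionals. This is where the interchange of limits is delicate, and where the example in Remark~\ref{Rmk_AETvsET} warns that naive arguments using genuine exponential tightness cannot be imported verbatim. The remedy is to fix the scale $l$ first, work entirely inside the compactum $\mathcal{K}_l$ (not its neighborhoods) after an initial $r\downarrow0$ reduction justified by outer regularity of the closed set $F$ and compactness, and only at the very end let $l\to\infty$; the tilted-measure tightness in the lower bound is handled by the same device.
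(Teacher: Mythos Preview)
Your overall architecture matches the paper's, and the lower-bound argument is essentially correct (the paper establishes $\widetilde\mu_\theta\Rightarrow\delta_{\sigma_V}$ by applying the already-proven upper bound to the tilted measures with rate function $I^V(\sigma)=I(\sigma)-\langle V,\sigma\rangle+\Lambda(V)$, rather than extracting weak-$*$ limit points, but either route works). There are, however, two genuine gaps.

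\textbf{Upper bound.} You write that $\overline{B(\mathcal{K}_l,r)}$ is ``compact for each fixed $r$'' and propose to run the finite-cover argument on $F\cap\overline{B(\mathcal{K}_l,r)}$. This is false: in an infinite-dimensional Polish space the closed $r$-neighborhood of a compact set need not be compact (take $\mathcal{K}_l=\{0\}$ in the example of Remark~\ref{Rmk_AETvsET}). Your proposed remedy---``work entirely inside $\mathcal{K}_l$ after an initial $r\downarrow0$ reduction by outer regularity''---does not close, because $L_\theta$ may never lie in $\mathcal{K}_l$ itself, so $\mathbb{P}(L_\theta\in F\cap\mathcal{K}_l)$ tells you nothing. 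The paper's device is the elementary topological fact (its Lemma~\ref{compacttrick}): if $F$ is closed, $K$ compact, $G$ open, and $F\cap K\subset G$, then $F\cap B(K,r)\subset G$ for some $r>0$. One first covers the genuinely compact set $F\cap\mathcal{K}_l$ by finitely many open half-spaces $\Gamma_{V_i}=\{\sigma:\langle V_i,\sigma\rangle-\Lambda(V_i)>a\}$, and only then chooses $r$ so that $F\cap B(\mathcal{K}_l,r)\subset\bigcup_i\Gamma_{V_i}$; the Markov inequality on each $\Gamma_{V_i}$ and hypothesis $\mathbf{(AET)}$ on the complement finish the job. The order is crucial: finite cover first, then $r$.

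\textbf{Goodness of $I$.} Your argument that $\{I\le a\}$ is ``contained in arbitrarily small neighborhoods of $\mathcal{K}_l$ for $l>a$, hence totally bounded'' is not justified: you have no mechanism to show $I(\sigma)\le a$ forces $\sigma$ close to $\mathcal{K}_l$. The paper instead exploits that $\mathcal{K}_l\subset\mathcal{P}(\mathfrak{X})$ is itself tight (Prokhorov), with inner compacta $K_{l,\varepsilon}\subset\mathfrak{X}$. From $\mathbf{(AET)}$ one first proves the pressure estimate
\[
\Lambda(V)\le\max\Bigl\{(1-\varepsilon)\sup_{K_{l,\varepsilon}}V+\varepsilon\sup_{\mathfrak{X}}V,\ \sup_{\mathfrak{X}}V-l\Bigr\},
\]
and then, testing against $V(x)=l\cdot\min\{\mathrm{dist}(x,K_{l,\varepsilon}),r\}/r$ and letting $r\to0$, one obtains $I(\sigma)\ge(\sigma(K_{l,\varepsilon}^C)-\varepsilon)l$. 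This gives tightness of $\{I\le a\}$ directly in $\mathcal{P}(\mathfrak{X})$, hence compactness.
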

    
    The rest of this subsection is devoted to the proof of Theorem~\ref{Thm Kifer}, which is divided into three steps. Asymptotic exponential tightness plays a significant role in the first two steps, leading to the goodness of $I$ and LDP upper bound \eqref{Kifer-upperbound}, respectively. The third step, which handles the lower bound \eqref{Kifer-lowerbound}, should be standard owing to ideas from \cite{DV-75}.

    \vspace{2cm}

    \noindent {\it Step 1: Goodness of the rate function.}
    
    Recall that the precompactness of the family of probability measures $\mathcal{K}_l$ is equivalent to the \textit{tightness} (see, e.g.~\cite{Billingsley-99}). Namely, for any $\varepsilon>0$, there is a compact set $K_{l,\varepsilon}\subset \mathfrak{X}$, such that
    \[\nu(K_{l,\varepsilon}^C)<\varepsilon\quad \text{for any }\nu\in \mathcal{K}_l.\]
    We shall keep the notation of $K_{l,\varepsilon}$ throughout this step.
    
    The starting point is a refined estimate for $\Lambda(V)$. We require the following simple fact, known as the Laplace principle: if $a_{\theta,1},\dots ,a_{\theta,n}>0$, then
    \begin{equation}
        \limsup_{\theta\in \Theta} \frac{1}{r_\theta}\log (a_{\theta,1}+\cdots +a_{\theta,n})=\max_{1\le k\le n} \limsup_{\theta\in \Theta} \frac{1}{r_\theta} \log a_{\theta,k}.\label{laplaceprinciple_limsup}
    \end{equation}

    \begin{lemma}
       Suppose that hypothesis \hyperlink{AET}{$\color{black}\mathbf{(AET)}$} is satisfied. Then for any $V\in C_b(\mathfrak{X})$ and $l,\varepsilon>0$,
        \begin{equation}
            \Lambda(V)\le \max \left\{(1-\varepsilon)\sup_{K_{l,\varepsilon}} V+\varepsilon \sup_\mathfrak{X} V, \sup_\mathfrak{X} V-l\right\}.\label{Lambda(V)<<V,K>}
        \end{equation}
    \end{lemma}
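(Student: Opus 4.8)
The plan is to estimate $\mathbb{E}e^{r_\theta\langle V,L_\theta\rangle}$ by splitting the probability space according to whether $L_\theta$ lands near the compact set $\mathcal{K}_l$ or not, and to control $\langle V,L_\theta\rangle$ on the good event by a continuity argument on $\mathcal{P}(\mathfrak{X})$.

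First I would fix $V\in C_b(\mathfrak{X})$ and $l,\varepsilon>0$, and set $M:=\sup_\mathfrak{X}V$. Observe that the linear functional $\sigma\mapsto\langle V,\sigma\rangle$ is continuous on $\mathcal{P}(\mathfrak{X})$ with the weak-$*$ topology, and $\mathcal{K}_l$ is compact, so there is some $r_0>0$ such that for every $\sigma\in B_{\mathcal{P}(\mathfrak{X})}(\mathcal{K}_l,r_0)$ we have $\langle V,\sigma\rangle\le \sup_{\nu\in\mathcal{K}_l}\langle V,\nu\rangle+\varepsilon'$, where $\varepsilon'>0$ will be chosen small. Next, using tightness of $\mathcal{K}_l$, for every $\nu\in\mathcal{K}_l$ we have $\langle V,\nu\rangle\le \sup_{K_{l,\varepsilon}}V\cdot\nu(K_{l,\varepsilon})+M\cdot\nu(K_{l,\varepsilon}^C)\le \sup_{K_{l,\varepsilon}}V+\varepsilon(M-\inf_\mathfrak{X}V)$, so up to harmlessly enlarging constants we get $\langle V,\sigma\rangle\le (1-\varepsilon)\sup_{K_{l,\varepsilon}}V+\varepsilon\sup_\mathfrak{X}V+\varepsilon''$ for $\sigma\in B_{\mathcal{P}(\mathfrak{X})}(\mathcal{K}_l,r_0)$, with $\varepsilon''\to 0$ as the auxiliary parameters shrink.

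Then I would write
\begin{equation*}
\mathbb{E}e^{r_\theta\langle V,L_\theta\rangle}=\mathbb{E}\left[e^{r_\theta\langle V,L_\theta\rangle}\mathbf{1}_{L_\theta\in B_{\mathcal{P}(\mathfrak{X})}(\mathcal{K}_l,r_0)}\right]+\mathbb{E}\left[e^{r_\theta\langle V,L_\theta\rangle}\mathbf{1}_{L_\theta\notin B_{\mathcal{P}(\mathfrak{X})}(\mathcal{K}_l,r_0)}\right].
\end{equation*}
The first term is bounded by $\exp\!\big(r_\theta[(1-\varepsilon)\sup_{K_{l,\varepsilon}}V+\varepsilon\sup_\mathfrak{X}V+\varepsilon'']\big)$. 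The second term is bounded by $e^{r_\theta M}\,\mathbb{P}(L_\theta\in B_{\mathcal{P}(\mathfrak{X})}(\mathcal{K}_l,r_0)^C)$, and by hypothesis \hyperlink{AET}{$\mathbf{(AET)}$} with $r=r_0$, $\limsup_{\theta}\frac{1}{r_\theta}\log$ of this is at most $M-l=\sup_\mathfrak{X}V-l$. Applying the Laplace principle \eqref{laplaceprinciple_limsup} with $n=2$ to combine the two pieces gives
\begin{equation*}
\Lambda(V)\le\max\left\{(1-\varepsilon)\sup_{K_{l,\varepsilon}}V+\varepsilon\sup_\mathfrak{X}V+\varepsilon'',\ \sup_\mathfrak{X}V-l\right\}.
\end{equation*}
Finally, letting the auxiliary parameters $\varepsilon',\varepsilon''\to 0$ (note $K_{l,\varepsilon}$ and $\varepsilon$ stay fixed throughout this limit) yields \eqref{Lambda(V)<<V,K>}.

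The main obstacle I anticipate is the bookkeeping in the continuity step: one must make sure that the radius $r_0$ in \hyperlink{AET}{$\mathbf{(AET)}$} can be chosen uniformly (depending only on $V$, $\mathcal{K}_l$, and the tolerances, not on $\theta$), which is exactly why compactness of $\mathcal{K}_l$ together with continuity of $\sigma\mapsto\langle V,\sigma\rangle$ is essential. The rest is a routine two-term Laplace-principle split; no delicate estimates are needed beyond keeping track of which small parameters are sent to zero and in what order.
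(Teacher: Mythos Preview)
Your approach is essentially identical to the paper's: split $\mathbb{E}e^{r_\theta\langle V,L_\theta\rangle}$ according to whether $L_\theta\in B_{\mathcal{P}(\mathfrak{X})}(\mathcal{K}_l,r)$, bound the good piece via tightness of $\mathcal{K}_l$ together with continuity of $\sigma\mapsto\langle V,\sigma\rangle$ on the compact set $\mathcal{K}_l$, bound the bad piece by $\sup_\mathfrak{X}V-l$ via \hyperlink{AET}{$\mathbf{(AET)}$}, combine with the Laplace principle, and send the auxiliary tolerance to zero.

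One small correction is needed, however. Your intermediate bound $\sup_{K_{l,\varepsilon}}V+\varepsilon(M-\inf_\mathfrak{X}V)$ does \emph{not} reduce to $(1-\varepsilon)\sup_{K_{l,\varepsilon}}V+\varepsilon M$ by ``sending auxiliary parameters to zero'': the discrepancy is $\varepsilon(\sup_{K_{l,\varepsilon}}V-\inf_\mathfrak{X}V)$, and $\varepsilon$ is fixed in the statement, so your $\varepsilon''$ does not vanish. The fix is to sharpen the tightness computation: with $s=\sup_{K_{l,\varepsilon}}V$ and $t=\nu(K_{l,\varepsilon}^C)<\varepsilon$, write $s\,\nu(K_{l,\varepsilon})+M\,t=s+(M-s)t\le (1-\varepsilon)s+\varepsilon M$ directly (using $M\ge s$), which is exactly what the paper does and leaves only the single continuity tolerance $\varepsilon'$ (the paper's $\eta$) to send to zero.
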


    \begin{proof}
        If $\nu\in \mathcal{K}_l$, then $\nu(K_{l,\varepsilon})>1-\varepsilon$ and thus
        \[\langle V,\nu\rangle\le (1-\varepsilon)\sup_{K_{l,\varepsilon}} V+\varepsilon \sup_\mathfrak{X} V.\]
        By continuity of $\nu\mapsto\langle V,\nu\rangle$ and compactness of $\mathcal{K}_l$, for any $\eta
        >0$, there exists $r>0$ such that
        \[\langle V,\nu\rangle\le (1-\varepsilon)\sup_{K_{l,\varepsilon}} V+\varepsilon \sup_\mathfrak{X} V+\eta\quad \text{for any }\nu\in B_{\mathcal{P}(\mathfrak{X})}(\mathcal{K}_l,r).\]
        Therefore
        \[\limsup_{\theta\in \Theta} \frac{1}{r_\theta} \log \mathbb{E} \mathbf{1}_{\{L_\theta\in B_{\mathcal{P}(\mathfrak{X})}(\mathcal{K}_l,r)\}}e^{r_\theta \langle V,L_\theta\rangle}\le (1-\varepsilon)\sup_{K_{l,\varepsilon}} V+\varepsilon \sup_\mathfrak{X} V+\eta.\]
        Thanks to \eqref{AET} in hypothesis \hyperlink{AET}{$\color{black}\mathbf{(AET)}$},
        \begin{align*}
            &\limsup_{\theta\in \Theta} \frac{1}{r_\theta} \log \mathbb{E} \mathbf{1}_{\{L_\theta\in B_{\mathcal{P}(\mathfrak{X})}(\mathcal{K}_l,r)^C\}}e^{r_\theta \langle V,L_\theta\rangle}\\
            \le &\sup_\mathfrak{X} V+\limsup_{\theta\in \Theta} \frac{1}{r_\theta}\log \mathbb{P}(L_\theta\in B_{\mathcal{P}(\mathfrak{X})}(\mathcal{K}_l,r)^C)\le \sup_\mathfrak{X} V-l.
        \end{align*}
        Finally, the Laplace principle \eqref{laplaceprinciple_limsup} implies
        \begin{align*}
            \Lambda(V)&=\limsup_{\theta\in \Theta} \frac{1}{r_\theta} \log \left(\mathbb{E} \mathbf{1}_{\{L_\theta\in B_{\mathcal{P}(\mathfrak{X})}(\mathcal{K}_l,r)\}}e^{r_\theta \langle V,L_\theta\rangle}+\mathbb{E} \mathbf{1}_{\{L_\theta\in B_{\mathcal{P}(\mathfrak{X})}(\mathcal{K}_l,r)^C\}}e^{r_\theta \langle V,L_\theta\rangle}\right)\\
            &\le\max \left\{(1-\varepsilon)\sup_{K_{l,\varepsilon}} V+\varepsilon \sup_\mathfrak{X} V+\eta,\sup_\mathfrak{X} V-l\right\}.
        \end{align*}
        Thus the lemma follows by sending $\eta$ to $0$.
    \end{proof}
    
    \begin{corollary}\label{Igesigma(Kepsilonl)C}
        Suppose that hypothesis \hyperlink{AET}{$\color{black}\mathbf{(AET)}$} is satisfied. Then for any $\sigma\in \mathcal{P}(\mathfrak{X})$, $l,\varepsilon>0$, 
        \begin{equation}
            I(\sigma)\ge (\sigma(K_{l,\varepsilon}^C)-\varepsilon)l.\label{Ilowerestimate}
        \end{equation}
    \end{corollary}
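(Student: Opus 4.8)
The plan is to deduce \eqref{Ilowerestimate} directly from the refined pressure estimate \eqref{Lambda(V)<<V,K>} by testing the variational formula \eqref{ratedefinition} for $I$ against a well-chosen family of functions $V$. Recall that $I(\sigma)=\sup_{V\in C_b(\mathfrak X)}(\langle V,\sigma\rangle-\Lambda(V))$, so it suffices to exhibit, for each target lower bound, a single admissible $V$ for which $\langle V,\sigma\rangle-\Lambda(V)$ is at least $(\sigma(K_{l,\varepsilon}^C)-\varepsilon)l$.

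\medskip

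\noindent\emph{Choice of test function.} Since $K_{l,\varepsilon}\subset\mathfrak X$ is compact (hence closed) and $\mathfrak X$ is a metric space, the function $x\mapsto\dist_{\mathfrak X}(x,K_{l,\varepsilon})$ is continuous. The idea is to take $V=V_M:=M\cdot\phi$, where $\phi\colon\mathfrak X\to[0,1]$ is a fixed bounded continuous function that vanishes on $K_{l,\varepsilon}$ and is identically $1$ outside the open $\varepsilon'$-neighbourhood $B_{\mathfrak X}(K_{l,\varepsilon},\varepsilon')$ — concretely $\phi(x)=\min\{1,\varepsilon'^{-1}\dist_{\mathfrak X}(x,K_{l,\varepsilon})\}$ — and $M>0$ is a scaling parameter to be sent to infinity. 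For such $V_M$ we have $\sup_{K_{l,\varepsilon}}V_M=0$ and $\sup_{\mathfrak X}V_M=M$, so \eqref{Lambda(V)<<V,K>} gives $\Lambda(V_M)\le\max\{\varepsilon M,\,M-l\}$. On the other hand $\langle V_M,\sigma\rangle=M\int\phi\,d\sigma\ge M\sigma(B_{\mathfrak X}(K_{l,\varepsilon},\varepsilon')^C)$. Hence
\[
I(\sigma)\ge M\sigma\bigl(B_{\mathfrak X}(K_{l,\varepsilon},\varepsilon')^C\bigr)-\max\{\varepsilon M,\,M-l\}.
\]
As $\varepsilon'\downarrow0$ the set $B_{\mathfrak X}(K_{l,\varepsilon},\varepsilon')^C$ increases to $K_{l,\varepsilon}^C$ (using that $K_{l,\varepsilon}$ is closed), so by monotone convergence $\sigma(B_{\mathfrak X}(K_{l,\varepsilon},\varepsilon')^C)\to\sigma(K_{l,\varepsilon}^C)$; thus we may replace the former by $\sigma(K_{l,\varepsilon}^C)-\eta$ for arbitrarily small $\eta>0$.

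\medskip

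\noindent\emph{Optimizing over $M$.} It remains to choose $M$ so that $M\sigma(K_{l,\varepsilon}^C)-\max\{\varepsilon M,M-l\}$ is as large as possible, i.e.\ at least $(\sigma(K_{l,\varepsilon}^C)-\varepsilon)l$. Write $p:=\sigma(K_{l,\varepsilon}^C)$. If $p\le\varepsilon$ the claimed bound $(p-\varepsilon)l$ is nonpositive and holds trivially since $I\ge0$. If $p>\varepsilon$, take $M=l$: then $\max\{\varepsilon l,\,0\}=\varepsilon l$ (as $\varepsilon<1$ may be assumed without loss of generality, or simply note $l-l=0\le\varepsilon l$), giving $I(\sigma)\ge pl-\varepsilon l=(p-\varepsilon)l$, which is exactly \eqref{Ilowerestimate} after absorbing the $\eta$-error by letting $\eta\downarrow0$.

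\medskip

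\noindent\emph{Main obstacle.} There is no serious obstacle here — the corollary is a short consequence of the preceding lemma. The only points requiring a modicum of care are: (i) making sure the test function $V_M$ is genuinely in $C_b(\mathfrak X)$ (bounded and continuous — handled by the truncated-distance construction), and (ii) passing from the open-neighbourhood complement $B_{\mathfrak X}(K_{l,\varepsilon},\varepsilon')^C$ back to $K_{l,\varepsilon}^C$, which relies on $K_{l,\varepsilon}$ being closed together with continuity of measure along increasing sequences. Both are routine, so the proof is essentially a one-line application of \eqref{Lambda(V)<<V,K>} with $V=l\cdot\phi$.
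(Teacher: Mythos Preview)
Your proof is correct and takes essentially the same approach as the paper: both test the variational formula for $I$ against the truncated-distance function $V(x)=\tfrac{l}{r}\min\{\dist_{\mathfrak X}(x,K_{l,\varepsilon}),r\}$ (your $V_M$ with $M=l$ and $\varepsilon'=r$), apply \eqref{Lambda(V)<<V,K>} to get $\Lambda(V)\le\varepsilon l$, and then let $r\downarrow0$ to pass from $B_{\mathfrak X}(K_{l,\varepsilon},r)^C$ to $K_{l,\varepsilon}^C$. The only cosmetic difference is that you first introduce the amplitude $M$ as a free parameter before specializing to $M=l$, whereas the paper writes down this choice directly.
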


    \begin{proof}
        For any $r>0$, substituting $V(x):=l/r\cdot \min\{\dist_\mathfrak{X}(x,K_{l,\varepsilon}),r\}$ into \eqref{Lambda(V)<<V,K>}, we get
        \[\Lambda(V)\le \max \{(1-\varepsilon)\cdot 0+\varepsilon \cdot l,l-l\}=\varepsilon l\]
        The definition of $I$, and the fact that $V(x)=l$ whenever $\dist_\mathfrak{X}(x,K_{l,\varepsilon})\ge r$, together yield
        \[I(\sigma)\ge \langle V,\sigma\rangle-\Lambda(V)\ge \sigma(B_\mathfrak{X}(K_{l,\varepsilon},r)^C) l-\varepsilon l.\]
        Since $B_\mathfrak{X}(K_{l,\varepsilon},r)^C$ increases to $K_{l,\varepsilon}^C$ as $r\to 0$, we arrive at the required inequality.
    \end{proof}

    Let us turn to the goodness of $I$, i.e.~$\{I\le a\}$ is compact for any $a\in [0,\infty)$. Recall that $I$ is lower semicontinuous, thus $\{I\le a\}$ is closed. In view of the equivalence between precompactness and tightness of probability measures, it remains to demonstrate tightness.

    \begin{proof}[Proof of the goodness]
        If $I(\sigma)\le a$, where $a\in [0,\infty)$, then inequality \eqref{Ilowerestimate} implies,
        \[\sigma(K_{l,\varepsilon}^C)\le \frac{a}{l}+\varepsilon\quad \text{for any }l,\varepsilon>0.\]
        In particular, $\sigma(K_{2a/\varepsilon,\varepsilon
        /2}^C)\le \varepsilon$. So $\{I\le a\}$ is tight, as desired.
    \end{proof}

    \vspace{3mm}
    
    \noindent {\it Step 2: The LDP upper bound.}
    
    We aim at the upper bound \eqref{Kifer-upperbound}. As \cite{Kifer-90} deals with such upper bound for compact subsets, a typical tool to extend this result to closed subsets is exponential tightness. Our new observation is that asymptotic exponential tightness is also sufficient for this purpose. Let us first exhibit the following lemma, whose proof is straightforward and therefore omitted.

    \begin{lemma}\label{compacttrick}
        Let $X$ be a metric space. The subsets $F$, $K$ and $G$ are closed, compact and open, respectively. If $F\cap K\subset G$, then there exists $r>0$ such that $F\cap B_X (K,r)\subset G$.
    \end{lemma}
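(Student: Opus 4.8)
The statement to prove is Lemma~\ref{compacttrick}: if $F$ is closed, $K$ compact, $G$ open, all in a metric space $X$, and $F\cap K\subset G$, then there exists $r>0$ such that $F\cap B_X(K,r)\subset G$.

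The plan is to argue by contradiction. Suppose no such $r$ works. Then for each $n\in\mathbb{N}$, taking $r=1/n$, there exists a point $x_n\in F\cap B_X(K,1/n)$ with $x_n\notin G$. Since $x_n\in B_X(K,1/n)$, we can pick $y_n\in K$ with $d(x_n,y_n)<1/n$. By compactness of $K$, after passing to a subsequence we may assume $y_n\to y\in K$; then $d(x_n,y)\le d(x_n,y_n)+d(y_n,y)\to 0$, so $x_n\to y$ as well. Now $F$ is closed and $x_n\in F$, hence $y\in F$; therefore $y\in F\cap K\subset G$. But $G$ is open and $x_n\to y$, so $x_n\in G$ for all large $n$, contradicting $x_n\notin G$.

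An alternative non-sequential route, which also works in a general metric space (and avoids subsequences), is to use the distance function. Since $F\cap K$ is compact (closed subset of the compact $K$) and contained in the open set $G$, the continuous function $x\mapsto \dist_X(x,G^C)$ attains a positive minimum $2\varepsilon>0$ on $F\cap K$ (here $\dist_X(x,\emptyset)=+\infty$ if $G=X$, in which case the statement is trivial). One then shows $r=\varepsilon$ suffices: if $x\in F\cap B_X(K,r)$ but $x\notin G$, pick $y\in K$ with $d(x,y)<\varepsilon$; combined with $x\in F$... this needs a touch more care since $y$ need not be in $F$, so the contradiction-with-sequences version is cleaner. I would therefore go with the sequential argument above, which is short and transparent.

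There is essentially no obstacle here — the only point requiring a moment's thought is ensuring that the limit point $y$ lands in $F\cap K$ (using closedness of $F$ and compactness of $K$ simultaneously), and that openness of $G$ then forces the tail of the sequence into $G$. Since the paper itself says the proof is straightforward and omitted, I would simply record the contradiction argument in one short paragraph.
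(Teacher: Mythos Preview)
Your sequential contradiction argument is correct and complete; it is exactly the standard proof one would expect, and the paper itself omits the proof as straightforward, so there is nothing to compare.
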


    With this observation, we can subtly modify Kifer's original proof.

    \begin{proof}[Proof of the LDP upper bound]
        To prove \eqref{Kifer-upperbound}, it suffices to show that
        \begin{equation}
            \limsup_{\theta\in \Theta} \frac{1}{r_\theta} \log \mathbb{P}(L_\theta\in F)\le -a,\label{Kifer_ubreduce}
        \end{equation}
        for any $a\in \mathbb{R}$ with $a<\inf \{I(\sigma):\sigma\in F\}$.
        
        To this end, fix any such $a\in \mathbb{R}$. Define the open set $\Gamma_V\subset \mathcal{P}(\mathfrak{X})$ for $V\in C_b(\mathfrak{X})$ by
        \[\Gamma_V:=\{\sigma\in \mathcal{P}(\mathfrak{X}):\langle V,\sigma\rangle-\Lambda(V)>a\}.\]
        For any $l\ge 0$, by definition of $\Gamma_V$, the compact set $F\cap \mathcal{K}_l$ is covered by $\{\Gamma_V:V\in C_b(\mathfrak{X})\}$. Thus there is a finite subcovering $\Gamma_{V_1},\dots, \Gamma_{V_n}$. By Lemma~\ref{compacttrick}, there exists $r>0$ such that
        \[F\cap B_{\mathcal{P}(\mathfrak{X})}(\mathcal{K}_l,r)\subset \Gamma_{V_1}\cup \cdots \cup \Gamma_{V_n}.\]
        In other words,
        \begin{equation}
            F\subset  \Gamma_{V_1}\cup \cdots \cup \Gamma_{V_n}\cup B_{\mathcal{P}(\mathfrak{X})}(\mathcal{K}_l,r)^C.\label{Kifer_ubdecompose}
        \end{equation} 
        Due to Markov's inequality, for each $1\le k\le n$,
        \begin{align}
            \limsup_{\theta\in \Theta} \frac{1}{r_\theta} \log \mathbb{P}(L_\theta\in \Gamma_{V_k})=&\limsup_{\theta\in \Theta} \frac{1}{r_\theta} \log \mathbb{P}(\langle V_k,L_\theta\rangle>\Lambda(V_k)+a)\notag\\
            \le &\limsup_{\theta\in \Theta} \frac{1}{r_\theta} \log \left(e^{-r_\theta (\Lambda(V_k)+a)} \mathbb{E} e^{r_\theta \langle V_k,L_\theta\rangle}\right)\notag\\
            =&-(\Lambda(V_k)+a)+\Lambda(V_k)=-a.\label{Kifer_ubGammaV}
        \end{align}
        Now \eqref{pressuredefinition}, \eqref{AET}, \eqref{laplaceprinciple_limsup}, \eqref{Kifer_ubdecompose} and \eqref{Kifer_ubGammaV} together imply
        \begin{align*}
            &\limsup_{\theta\in \Theta} \frac{1}{r_\theta} \log \mathbb{P}(L_\theta\in F)\\
            \le &\limsup_{\theta\in \Theta} \frac{1}{r_\theta} \log \left(\sum_{k=1}^n \mathbb{P}(L_\theta\in \Gamma_{V_k})+\mathbb{P}(L_\theta \in B_{\mathcal{P}(\mathfrak{X})}(\mathcal{K}_l,r)^C)\right)\le \max \{-a,-l\}.
        \end{align*}
        Consequently, this implies inequality~\eqref{Kifer_ubreduce} by sending $l$ to $\infty$.
    \end{proof}

    \vspace{3mm}

    \noindent {\it Step 3: The LDP lower bound.}
    
    It remains to establish the LDP lower bound \eqref{Kifer-lowerbound}. For any open set $G\subset \mathcal{P}(\mathfrak{X})$ and equilibrium state $\sigma_V\in G\cap \mathcal{W}$ with $V\in \mathcal{V}$, it suffices to derive
    \begin{align}
        \liminf_{\theta\in \Theta} \frac{1}{r_\theta} \log \mathbb{P}(L_\theta\in G)\ge -I(\sigma_V)=\Lambda(V)-\langle V,\sigma_V\rangle.\label{eslowerbound}
    \end{align}
    Let us denote with $\mu_\theta(B):=\mathbb{P}(L_\theta\in B)$ for $B\in \mathcal{B}(\mathcal{P}(\mathfrak{X}))$. Define $\mu_\theta^V\in \mathcal{P}(\mathcal{P}(\mathfrak{X}))$ by
    \begin{equation}
        \mu_\theta^V(B)=\frac{\mathbb{E} \mathbf{1}_{\{L_\theta\in B\}}e^{r_\theta \langle V,L_\theta\rangle}}{\mathbb{E} e^{r_\theta \langle V,L_\theta\rangle}}.\label{Kife_lbdefmuthetaV}
    \end{equation}

    \vspace{3mm}
    The idea of the following lemma is similar to the proof of the upper bound \eqref{Kifer-upperbound}, except that we use the compact set $\{I\le a\}$ instead of $\mathcal{K}_l$ and then send $a$ to $\infty$. It is essentially contained in \cite[Theorem 4.3.1 and Theorem 4.4.2]{DZ10}, and the proof is skipped.
    \begin{lemma}\label{lem_modifiedULDP}
        Suppose the LDP upper bound \eqref{Kifer-upperbound} holds with  a good rate function $I$. Then for any $V\in \mathcal{V}$ and closed set $F\subset \mathcal{P}(\mathfrak{X})$,
        \begin{equation}
            \limsup_{\theta\in \Theta} \frac{1}{r_\theta} \log \mu_\theta^V(F)\le -\inf\{I^V(\sigma):\sigma\in F\}.\label{modifiedupperbound}
        \end{equation}
        Here $I^V(\sigma):=I(\sigma)-\langle V,\sigma\rangle+\Lambda(V)$ is also a good rate function.
    \end{lemma}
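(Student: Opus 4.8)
The statement is the upper-bound half of Varadhan's lemma applied to the exponentially tilted net $\mu_\theta^V$, so the plan is to separate the numerator and denominator in \eqref{Kife_lbdefmuthetaV}. Since $V\in\mathcal{V}$, property (1) of Definition~\ref{def_mathcalV} tells us that $\tfrac1{r_\theta}\log\mathbb{E}e^{r_\theta\langle V,L_\theta\rangle}$ actually converges to $\Lambda(V)$, and since $|\langle V,\sigma\rangle|\le\|V\|_\infty$ for every $\sigma\in\mathcal{P}(\mathfrak{X})$, this denominator stays bounded away from $0$ and $\infty$, so $\mu_\theta^V$ is well defined. I would therefore reduce the lemma to the numerator estimate
\[\limsup_{\theta\in\Theta}\frac{1}{r_\theta}\log\mathbb{E}\,\mathbf{1}_{\{L_\theta\in F\}}e^{r_\theta\langle V,L_\theta\rangle}\le\sup_{\sigma\in F}\bigl(\langle V,\sigma\rangle-I(\sigma)\bigr),\]
because subtracting the convergent denominator from this then gives exactly $\limsup_\theta\tfrac1{r_\theta}\log\mu_\theta^V(F)\le\sup_{\sigma\in F}(\langle V,\sigma\rangle-I(\sigma))-\Lambda(V)=-\inf_{\sigma\in F}I^V(\sigma)$.

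To prove the numerator estimate I would run the standard slicing argument for Varadhan's upper bound. Set $c:=\|V\|_\infty$, fix $\delta>0$, pick a partition $-c=t_0<t_1<\cdots<t_m=c$ of mesh at most $\delta$, and let $F_j:=F\cap\{\sigma:t_{j-1}\le\langle V,\sigma\rangle\le t_j\}$; since $\sigma\mapsto\langle V,\sigma\rangle$ is continuous and $F$ is closed, each $F_j$ is closed and the $F_j$ cover $F$. Bounding $e^{r_\theta\langle V,L_\theta\rangle}\le e^{r_\theta t_j}$ on $\{L_\theta\in F_j\}$ gives $\mathbb{E}\,\mathbf{1}_{\{L_\theta\in F\}}e^{r_\theta\langle V,L_\theta\rangle}\le\sum_{j=1}^m e^{r_\theta t_j}\mathbb{P}(L_\theta\in F_j)$, and applying the LDP upper bound \eqref{Kifer-upperbound} to each closed set $F_j$ together with the Laplace principle \eqref{laplaceprinciple_limsup} for finite sums yields $\limsup_\theta\tfrac1{r_\theta}\log\mathbb{E}\,\mathbf{1}_{\{L_\theta\in F\}}e^{r_\theta\langle V,L_\theta\rangle}\le\max_{1\le j\le m}(t_j-\inf_{F_j}I)$. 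Because $t_j\le\langle V,\sigma\rangle+\delta$ on $F_j$, the right-hand side is at most $\delta+\sup_F(\langle V,\cdot\rangle-I)$, and letting $\delta\downarrow0$ finishes it. (An empty $F_j$ contributes probability $0$, with the convention $\inf_{\emptyset}I=+\infty$, so such terms are harmless, and the same computation covers the degenerate case $\sup_F(\langle V,\cdot\rangle-I)=-\infty$.)

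Finally I would check that $I^V(\sigma)=I(\sigma)-\langle V,\sigma\rangle+\Lambda(V)$ is a good rate function: it is lower semicontinuous as the sum of the lower semicontinuous $I$, the continuous $-\langle V,\cdot\rangle$ and a constant; it is nonnegative because $I(\sigma)\ge\langle V,\sigma\rangle-\Lambda(V)$ is immediate from the definition \eqref{ratedefinition} of $I$ as a Legendre transform; and its sublevel sets are compact, since $\{I^V\le a\}$ is closed and contained in $\{I\le a+\|V\|_\infty-\Lambda(V)\}$, which is compact because $I$ is good. The one point that genuinely needs care---and the reason the lemma is not quite a black-box citation---is that, unlike in the textbook Varadhan setting, we are given only the LDP upper bound with good $I$, not any exponential tightness of the tilted net $\mu_\theta^V$ itself; the slicing argument above circumvents this using solely the boundedness of $V$ (which makes the tilting weight uniformly bounded in $\theta$) and the goodness of $I$ (which makes finitely many slices suffice), so beyond this I expect nothing harder than the bookkeeping in \cite[Theorems~4.3.1 and 4.4.2]{DZ10}.
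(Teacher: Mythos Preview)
Your proof is correct. The paper itself skips the argument and simply says it is ``essentially contained in \cite[Theorems 4.3.1 and 4.4.2]{DZ10}''; your slicing of the range of $\langle V,\cdot\rangle$ is precisely the standard proof of Varadhan's upper bound (Theorem~4.3.1 there), and the handling of the convergent denominator via property~(1) of $\mathcal{V}$ is exactly the tilting step (Theorem~4.4.2). The paper's one-line hint suggests a slightly different organization---recycling the covering argument from the proof of \eqref{Kifer-upperbound} with the compact level sets $\{I\le a\}$ in place of $\mathcal{K}_l$ and sending $a\to\infty$---but both routes are standard and equivalent.

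One small quibble: in your closing paragraph you write that ``the goodness of $I$ makes finitely many slices suffice.'' That is not quite right---the finiteness of the partition comes from the boundedness of $V$ (the range of $\langle V,\cdot\rangle$ is $[-c,c]$), not from goodness. Goodness of $I$ is genuinely needed only where you use it earlier, to conclude that $I^V$ has compact sublevel sets. This does not affect the validity of your argument, only the commentary.
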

    
    Making use of Lemma~{\rm\ref{lem_modifiedULDP}}, we establish the following technical result.
    \begin{lemma}\label{muthetaV->deltasigmaV}
        Under the assumptions of Lemma~{\rm\ref{lem_modifiedULDP}}, $\mu_\theta^V \Rightarrow \delta_{\sigma_V}$ for any $V\in \mathcal{V}$, where $\Rightarrow$ stands for the weak convergence of probability measures.
    \end{lemma}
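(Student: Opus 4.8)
The plan is to deduce the claim from the modified upper bound of Lemma~\ref{lem_modifiedULDP}, together with the fact that $\sigma_V$ is the \emph{unique} zero of the good rate function $I^V$. First I would record that $I^V\ge 0$ on $\mathcal P(\mathfrak X)$: by the very definition \eqref{ratedefinition} of $I$ as the Legendre transform of $\Lambda$, one has $I(\sigma)\ge \langle V,\sigma\rangle-\Lambda(V)$ for every $\sigma$, i.e.\ $I^V(\sigma)=I(\sigma)-\langle V,\sigma\rangle+\Lambda(V)\ge 0$. Moreover $I^V(\sigma)=0$ exactly when $\Lambda(V)=\langle V,\sigma\rangle-I(\sigma)$, that is, when $\sigma$ is an equilibrium state for $V$; since $V\in\mathcal V$, Definition~\ref{def_mathcalV} furnishes a unique such state, so $\{I^V=0\}=\{\sigma_V\}$. (Note in passing that $\mu_\theta^V$ is genuinely a probability measure, since $V$ is bounded and hence $\mathbb E e^{r_\theta\langle V,L_\theta\rangle}\in(0,\infty)$.)

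Next I would show that every closed set $F\subset\mathcal P(\mathfrak X)$ with $\sigma_V\notin F$ is exponentially negligible under $\mu_\theta^V$. The point is that $c:=\inf\{I^V(\sigma):\sigma\in F\}>0$: if $c=0$, the sets $\{I^V\le 1/k\}\cap F$ would be nonempty, compact (goodness of $I^V$, from Lemma~\ref{lem_modifiedULDP}) and decreasing in $k$, hence would have nonempty intersection, producing a zero of $I^V$ inside $F$ — contradicting $\{I^V=0\}=\{\sigma_V\}$ and $\sigma_V\notin F$. With $c>0$ secured, Lemma~\ref{lem_modifiedULDP} gives
\[
\limsup_{\theta\in\Theta}\frac{1}{r_\theta}\log\mu_\theta^V(F)\le -c<0,
\]
so $\mu_\theta^V(F)\to 0$ along the net $\Theta$.

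Finally, to conclude $\mu_\theta^V\Rightarrow\delta_{\sigma_V}$, I would fix an arbitrary open set $G\ni\sigma_V$, apply the previous step to $F:=G^C$, and obtain $\mu_\theta^V(G)=1-\mu_\theta^V(G^C)\to 1$; since this holds for every open neighbourhood of $\sigma_V$, the portmanteau characterisation of weak convergence (valid for nets) yields $\mu_\theta^V\Rightarrow\delta_{\sigma_V}$. I expect the middle step to be the only load-bearing one: it is precisely there that goodness of $I^V$ is indispensable, upgrading ``$\sigma_V$ is the unique minimiser'' to a strictly positive infimum over closed sets avoiding $\sigma_V$. Everything else is bookkeeping with the definitions of $\Lambda$, $I$, and $\mathcal V$.
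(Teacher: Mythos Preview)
Your proof is correct and follows essentially the same route as the paper: both deduce that $\inf_{G^C} I^V>0$ from the goodness of $I^V$ together with $\{I^V=0\}=\{\sigma_V\}$, then apply the modified upper bound \eqref{modifiedupperbound} to get $\mu_\theta^V(G^C)\to 0$ and hence $\mu_\theta^V(G)\to 1$ for every open $G\ni\sigma_V$. The only cosmetic difference is that the paper phrases the compactness step as ``$\{I^V\le 1/n\}\subset G$ for some $n$'' while you phrase it contrapositively via the nested sets $\{I^V\le 1/k\}\cap F$; these are equivalent.
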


    \begin{proof}
        It is equivalent to say, if $U\subset \mathcal{P}(\mathfrak{X})$ is a neighbourhood of $\sigma_V$, then \begin{equation}
            \lim_{\theta\in \Theta} \mu_\theta^V(U)=1.\label{Kife_lbconvergetodelta}
        \end{equation}
        
        The intersection of the compact sets $\{I^V \le 1/n\}(n\in \mathbb{N})$ is equal to the singleton $\{\sigma_V\}$. By a standard compactness argument, there exists an $n\in \mathbb{N}$ such that $\{I^V \le 1/n\}\subset U$. Thus $U^C$ is a closed set and $I^V(\sigma)>1/n$ for any $\sigma\in U^C$. In view of \eqref{modifiedupperbound}, we obtain
        \[\limsup_{\theta\in \Theta} \frac{1}{r_\theta} \log \mu_\theta^V(U^C)\le -\inf \{I^V(\sigma):\sigma\in U^C\}\le -\frac{1}{n}.\]
        Therefore $\mu_\theta^V(U^C)\to 0$, and thus $\mu_\theta^V(U)\to 1$ as desired.
    \end{proof}

    \vspace{3mm}
    Finally, we are now in a position to verify  \eqref{eslowerbound}, yielding the LDP lower bound. 
    
    \begin{proof}[Proof of the LDP lower bound]
        Since $G$ is open and $\sigma_V\in G$, for any $\varepsilon>0$, there exists $r>0$ such that $B_{\mathcal{P}(\mathfrak{X})}(\sigma_V,r)\subset G$ and 
        \begin{equation}
            \langle V,\sigma\rangle<\langle V,\sigma_V\rangle+\varepsilon\quad\text{for any }\sigma\in B_{\mathcal{P}(\mathfrak{X})}(\sigma_V,r).\label{Kifer_lbreducetonbhd}
        \end{equation}
        Gathering \eqref{Kife_lbdefmuthetaV}, \eqref{Kife_lbconvergetodelta}, \eqref{Kifer_lbreducetonbhd} and the definition of $\mathcal{V}$, we obtain
        \begin{align*}
            \liminf_{\theta\in \Theta} \frac{1}{r_\theta} \log \mathbb{P}(L_\theta\in G)&\ge \liminf_{\theta\in \Theta} \frac{1}{r_\theta} \log \mu_\theta(B_{\mathcal{P}(\mathfrak{X})}(\sigma_V,r))\\
            &=\liminf_{\theta\in \Theta} \frac{1}{r_\theta} \log \int_{B_{\mathcal{P}(\mathfrak{X})}(\sigma_V,r)} \frac{\mathbb{E} e^{r_\theta \langle V, L_\theta\rangle}}{e^{r_\theta \langle V,\sigma\rangle}}\, \mu_\theta^V(d\sigma)\\
            &=\Lambda(V)+\liminf_{\theta\in \Theta} \frac{1}{r_\theta} \log \int_{B_{\mathcal{P}(\mathfrak{X})}(\sigma_V,r)} e^{-r_\theta\langle V,\sigma\rangle}\, \mu_\theta^V(d\sigma)\\
            &\ge \Lambda(V)+\liminf_{\theta\in \Theta} \frac{1}{r_\theta} \log \left(e^{-r_\theta(\langle V,\sigma_V\rangle+\varepsilon)}\mu_\theta^V (B_{\mathcal{P}(\mathfrak{X})}(\sigma_V,r))\right)\\
            &=\Lambda(V)-\langle V,\sigma_V\rangle-\varepsilon=-I(\sigma_V)-\varepsilon.
        \end{align*}
        This immediately implies \eqref{eslowerbound}.
    \end{proof}

    Summarizing Steps 1-3, the proof of Theorem~\ref{Thm Kifer} is now complete.

    \begin{remark}\label{Rmk_Kifer}
    We also mention the original assumptions in Kifer's criterion {\rm\cite[Theorem 2.1]{Kifer-90}}, where the phase space $\mathfrak{X}$ is assumed to be compact. The precise conditions are:

    \begin{enumerate}
        \item[(1)] For any $V\in C(\mathfrak{X})$, the pressure function $\Lambda(V)$ converges.

        \item[(2)] For a dense linear subspace of $C(\mathfrak{X})$, the equilibrium state is unique. 
    \end{enumerate}
    Actually, since $V\mapsto \frac{1}{n}\log Q_n^V \mathbf{1}$ is $1$-Lipschitz, one only requires {\rm(1)} to hold for a dense subset of $C(\mathfrak{X})$. As a consequence, the two conditions are equivalent to $\mathcal{V}$ containing a dense linear subspace of $C(\mathfrak{X})$. An ideal situation for this is $\mathcal{V}=L(\mathfrak{X})$.
\end{remark}

    \subsection{Proof of Theorem~\ref{Thm LDP}}\label{subsec_prooflevel2LDP}

    We come back to the RDS setting as in the assumptions of Theorem~\ref{Thm LDP}. The first three parts are devoted to statements \hyperlink{a}{(a)}-\hyperlink{c}{(c)} on bounded invariant sets $X_R$, defined in \eqref{XR}. Then the last part quickly derives the local LDP on the whole space $\mathcal{X}$.

    \vspace{3mm}
    \noindent\textbf{Convention.} In the first three parts we denote $X_R(R>0)$ by $X$ for simplicity, and drop the label $R$ in various notations, as this parameter plays no role in the proof. Recall that  hypotheses \hyperlink{AC}{$\color{black}(\mathbf{AC})$}, \hyperlink{I}{$\color{black}(\mathbf{I})$} and \hyperlink{C}{$\color{black}(\mathbf{C})$} are still valid, with the phase space $\mathcal{X}$ replaced by $X$, the compactum $Y$ unchanged, and $V(x)$ in \eqref{asymptotic compactness} a constant depending on $R$.

    \vspace{3mm}
    
    \subsubsection{Proof of statement (b): uniform local LDP}\label{subsubsec_(b)}

    To apply Kifer's criterion, Theorem~\ref{Thm Kifer} with $\mathfrak{X}=X$, we need to verify hypothesis \hyperlink{AET}{$\color{black}(\mathbf{AET})$}, which turns out a direct consequence of \hyperlink{AC}{$\color{black}(\mathbf{AC})$}.
    
    Notice that for any $x\in X$, we have
    \[\dist_{\mathcal{P}(X)}(\delta_x,\mathcal{P}(Y))\le \dist_X (x,Y).\]
    The reason is that one can find $y\in Y$ with $d(x,y)=\dist_X (x,Y)$, and hence
    \[\dist_{\mathcal{P}(X)}(\delta_x,\mathcal{P}(Y))\le \|\delta_x-\delta_y\|_L^*=\sup_{\|f\|_L\le 1} |f(x)-f(y)|\le d(x,y).\]
    According to \eqref{asymptotic compactness} and the definition of dual-Lipschitz distance, 
    \[\dist_{\mathcal{P}(X)}(L_{n,x},\mathcal{P}(Y))\le \frac{1}{n}\sum_{k=1}^n \dist_{\mathcal{P}(X)}(\delta_{x_k},\mathcal{P}(Y))\le \frac{1}{n}\sum_{k=1}^n \dist_X(x_k,Y)\le \frac{1}{n}\sum_{k=1}^n Ce^{-\kappa k},\] 
    which converges to $0$ uniformly as $n\to \infty$. Therefore, inequality \eqref{AET} follows by letting
    \begin{equation*}
        \mathcal{K}_l=\mathcal{P}(Y)\quad\text{for any } l>0.
    \end{equation*}
    Moreover, by Corollary~\ref{Igesigma(Kepsilonl)C} and setting $K_{l,\varepsilon}=Y$ in \eqref{Ilowerestimate}, it follows that
    \begin{equation}
        I(\sigma)=\infty \quad\text{for any }\sigma\not\in\mathcal{P}(Y).\label{D(I)subsetP(Y)}
    \end{equation}

    Thus Theorem~\ref{Thm Kifer} is now applicable in our RDS~\eqref{RDS} setting. And the desired uniform level-2 local LDP on $X$, i.e.~statement \hyperlink{b}{(b)} of Theorem~\ref{Thm LDP}, follows immediately.

    \subsubsection{Proof of statement (a): rate function}\label{subsubsec_(a)}

    To establish \hyperlink{a}{(a)}, we first introduce the Feynman--Kac semigroup, which will be used to derive the variation formula for the rate function \eqref{I-variantion formula}.
    
    \begin{definition}
        For any $V\in C_b(X)$ and $n\in \mathbb{N}$, define  $Q^{V}_n\colon C_b(X)\to C_b(X)$ by
    \begin{equation}
       Q^{V}_nf(x):=\mathbb{E}_x f(x_n)e^{V(x_1)+\cdots +V(x_n)}\quad\text{for }f\in C_b(X).\label{def_F-Csemigroup}
    \end{equation}
    It is easy to see $Q^{V}_n$ indeed forms a semigroup. Denote by $Q_n^{V*}\colon \mathcal{M}_+(X)\rightarrow\mathcal{M}_+(X)$ its dual semigroup. We write $Q^V$ and $Q^{V*}$ in place of $Q_1^V$ and $Q^{V*}_1$ for simplicity.
    \end{definition}
    
    \vspace{3mm}

    If we take $f=\mathbf{1}$ in \eqref{def_F-Csemigroup}, then $Q_n^V \mathbf{1}=\mathbb{E} e^{n\langle V,L_{n,x}\rangle}$ and thus
    \begin{equation}
        \Lambda(V)=\limsup_{n\to \infty} \frac{1}{n} \log \|Q_n^V\mathbf{1}\|_\infty.\label{Lambda(V)andQ_nV}
    \end{equation}
    Therefore the Feynman--Kac semigroup has a natural connection to the large deviations for Markov processes. Since $Q_n^V$ is a semigroup and $|Q_n^V f|\le \|f\|_\infty |Q_n^V \mathbf{1}|$, we have
    \[\|Q_{m+n}^V\mathbf{1}\|_{\infty}\le \|Q_m^V\mathbf{1}\|_{\infty}\|Q_n^V\mathbf{1}\|_{\infty}.\]
    A well-known consequence of the subadditivity implies that \eqref{Lambda(V)andQ_nV} can be replaced by
    \begin{equation}
        \Lambda(V)=\lim_{n\to \infty} \frac{1}{n}\log \|Q_n^V \mathbf{1}\|_{\infty}\label{Lambda(V)andQ_nVconverge}
    \end{equation}

    In order to prove $\mathcal{I}|_{\mathcal{P}(X)}=I$, let us denote the right-hand side of \eqref{I-variantion formula} as
    \[I'\colon \mathcal{P}(X)\to [0,\infty],\quad I'(\sigma):=\begin{cases}
        -\inf\limits_{f\in C_+(Y)}\displaystyle\int_Y \log\left(\frac{Pf}{f}\right)\, d\sigma\quad&\text{for }\sigma\in\mathcal{P}(Y),\\
        \infty\quad&\text{otherwise}.
    \end{cases}\]
    Then we need to show $I=I'$. Such entropy-type formula can be found in \cite{DV-75}, where the setting is slightly different from ours. Since $P(\lambda f)=\lambda P(f)$ for any $\lambda>0$ and $Y$ is compact, the constraint $f\in C_+(Y)$ can be replaced by $f\in C(Y)$ and $f\ge 1$. 
    
    Let us introduce an intermediate term $I''\colon \mathcal{P}(X)\to [0,\infty]$, defined by
    \begin{equation}
        I''(\sigma):=-\inf\limits_{f\in C_b(X),\ f\ge 1}\int_{X}\log\left(\frac{Pf}{f}\right)\, d\sigma.\label{I2definition}
    \end{equation}
    Then following the same idea as in \cite{DV-75}, one can prove $I=I''$. For the reader's convenience, we briefly sketch the proof, borrowed from a recent resource \cite[Proposition 2.5]{JNPS-21}. 

    \begin{proof}[Sketched proof of $I=I''$]
    By virtue of \eqref{Lambdaconstant}, it suffices to derive that for any $\sigma\in \mathcal{P}(X)$,
    \[I''(\sigma)=\sup_{V\in C_b(X),\ \Lambda(V)=0} \langle V,\sigma\rangle.\]
    On the one hand, fix any such $V$ and consider (noting that $\|Q_n^V \mathbf{1}\|_\infty^{1/n}\to 1$ by \eqref{Lambda(V)andQ_nVconverge})
    \[f(x):=e^{V(x)}\sum_{n=0}^\infty e^{-\varepsilon n} Q_n^V\mathbf{1} (x),\]
    where $\varepsilon>0$ is arbitrary. Plugging such $f(x)$ into \eqref{I2definition} yields that $I''(\sigma)\ge \langle V,\sigma\rangle-\varepsilon$. 
    From this we deduce that $I''(\sigma)\ge I(\sigma)$. On the other hand, for any $f\in C_b(X)$ with $f\ge 1$, let 
    \[V(x):=-\log \left(\frac{Pf(x)}{f(x)}\right).\]
    Then using the Markov property, it can be checked that
    \[\|f\|_\infty^{-1}\le Q_n^V\mathbf{1}(x)\le \|f\|_\infty.\]
    In particular, it leads to $\Lambda(V)=0$, thereby implying $I(\sigma)\ge \langle V,\sigma\rangle$, and hence $I''(\sigma)\le I(\sigma)$.
    \end{proof}

    \begin{lemma}
         Suppose that hypothesis \hyperlink{AC}{$\color{black}\mathbf{(AC)}$} is satisfied, then the variation formula \eqref{I-variantion formula} holds, i.e.~$I(\sigma)=I'(\sigma)$ for any $\sigma\in \mathcal{P}(X)$.
    \end{lemma}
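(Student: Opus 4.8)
The goal is to identify the two rate functions $I'$ (defined via functions on the compactum $Y$) and $I''$ (defined via functions on $X$), since we have already established $I=I''$. Because \eqref{D(I)subsetP(Y)} forces $I(\sigma)=\infty$ for $\sigma\not\in\mathcal P(Y)$, and both $I'$ and $I''$ are declared to be $\infty$ outside $\mathcal P(Y)$ (for $I''$ this will follow from the same argument once we know $I=I''$), it suffices to prove $I'(\sigma)=I''(\sigma)$ for every $\sigma\in\mathcal P(Y)$. So the real content is: for $\sigma$ supported on $Y$, the infimum over $f\in C(Y)$, $f\ge1$, of $-\int_Y\log(Pf/f)\,d\sigma$ equals the infimum over $f\in C_b(X)$, $f\ge1$, of $-\int_Y\log(Pf/f)\,d\sigma$. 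Note a subtlety: in $I'$ the quantity $Pf$ for $f\in C(Y)$ needs interpreting — since $Y$ is invariant, $P$ restricted to $Y$ maps $C(Y)\to C(Y)$, so $Pf(x)$ makes sense for $x\in Y$, and the integral is over $Y$ only.

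**Key steps.** First, the inequality $I''\le I'$ (hence $I''(\sigma)\le I'(\sigma)$) should come essentially for free: given $f\in C(Y)$ with $f\ge1$, extend it to some $\tilde f\in C_b(X)$ with $\tilde f\ge 1$ (e.g.\ by a McShane–type extension preserving the lower bound, which the paper's appendix on McShane extensions presumably supplies). For $x\in Y$, invariance of $Y$ gives $P(x,\cdot)\in\mathcal P(Y)$, so $P\tilde f(x)=\int_Y\tilde f\,dP(x,\cdot)=\int_Y f\,dP(x,\cdot)=Pf(x)$. Thus the integrand $\log(P\tilde f/\tilde f)$ on $Y$ coincides with $\log(Pf/f)$, and the $I''$ infimum is over a larger admissible class, giving $I''(\sigma)\le I'(\sigma)$. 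Second, for the reverse inequality $I'\le I''$: given $f\in C_b(X)$ with $f\ge1$, simply restrict it to $Y$. Again by invariance, for $x\in Y$ we have $Pf(x)=\int_Y f\,dP(x,\cdot)=P(f|_Y)(x)$, so $\int_Y\log(Pf/f)\,d\sigma=\int_Y\log(P(f|_Y)/(f|_Y))\,d\sigma$, and $f|_Y\in C(Y)$ with $f|_Y\ge1$ is admissible for $I'$. Taking infima gives $I'(\sigma)\le I''(\sigma)$.

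**Main obstacle.** The crux — and the only place needing care — is the interplay between the domain of integration and the domain of the test functions, together with the well-definedness of $Pf$ on $Y$. Everything hinges on the invariance of $Y$ (hypothesis \hyperlink{AC}{$\mathbf{(AC)}$} guarantees $Y$ is a compact \emph{invariant} set, so $S(y,\zeta)\in Y$ and hence $P(y,\cdot)\in\mathcal P(Y)$ for $y\in Y$), which makes the two "$Pf$" notions agree on $Y$ and makes the restriction/extension maps between $\{f\in C(Y):f\ge1\}$ and $\{f\in C_b(X):f\ge1\}$ behave correctly. One should also double-check measurability/continuity: $P:C(Y)\to C(Y)$ is Feller (inherited from the Feller property on $X$ plus invariance), so $\log(Pf/f)$ is a bounded continuous function on $Y$ and the integral against $\sigma\in\mathcal P(Y)$ is finite; the lower bound $f\ge1$ ensures $Pf\ge1$ too, so the logarithm is nonnegative and the infimum is $\le 0$, consistent with $I'\ge0$. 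Finally, one records that since $I=I''$ and $I(\sigma)=\infty$ off $\mathcal P(Y)$ while $I'(\sigma)=\infty$ off $\mathcal P(Y)$ by definition, the identity $I=I'$ extends from $\mathcal P(Y)$ to all of $\mathcal P(X)$, completing the proof of the variation formula \eqref{I-variantion formula}.
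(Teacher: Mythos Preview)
Your approach is essentially the same as the paper's: reduce to $\sigma\in\mathcal P(Y)$ via \eqref{D(I)subsetP(Y)}, then match the two infima using restriction $C_b(X)\to C(Y)$ and extension $C(Y)\to C_b(X)$, both of which preserve $\int_Y\log(Pf/f)\,d\sigma$ thanks to the invariance of $Y$. The paper invokes Tietze's extension theorem where you use a McShane-type extension; either works since only continuity and the bound $f\ge 1$ are needed.

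There is, however, a sign slip in your bookkeeping. Since $I'(\sigma)=-\inf_{f\in C(Y),\,f\ge1}\int_Y\log(Pf/f)\,d\sigma$ and likewise for $I''$, enlarging the admissible class makes the infimum \emph{smaller} and hence $-\inf$ \emph{larger}. Thus your extension argument (showing the $C_b(X)$ class contains the $C(Y)$ class up to values) actually gives $I''(\sigma)\ge I'(\sigma)$, not $I''(\sigma)\le I'(\sigma)$; and your restriction argument gives $I'(\sigma)\ge I''(\sigma)$, not the reverse. The two together still yield $I'=I''$, so the proof stands once the labels are swapped --- this is exactly how the paper assigns them: $I''\le I'$ is ``automatic'' (restriction), and $I'\le I''$ uses the extension.
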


    \begin{proof}
        If $\sigma\not \in \mathcal{P}(Y)$, then \eqref{D(I)subsetP(Y)} implies that $I(\sigma)=\infty=I'(\sigma)$. And if $\sigma\in \mathcal{P}(Y)$, then it suffices to show $I'(\sigma)=I''(\sigma)$. By definition, automatically $I''(\sigma)\le I'(\sigma)$. Conversely, by Tietze's extension theorem, if $f\in C(\mathcal{A})$ and $f\ge 1$, then there is an $F\in C_b(X)$ satisfying $F|_{\mathcal{A}}=f$ and $1\le F\le \|f\|_\infty$. Thus $I'(\sigma)\le I''(\sigma)$ holds as well.
    \end{proof}

    \begin{remark}\label{I(mu*)=0}
        A by-product of the variation formula \eqref{I-variantion formula} is that $I(\mu_*)=0$. This is natural in view of the exponential mixing, but a rigorous proof is not obvious at first sight. In fact, for any $f\in C_+(Y)$, by convexity of logarithm, we have
        \[P\log f\le \log Pf.\]
        Due to the invariance of $\mu_*$,
        \[\int_Y \log f\, d\mu_*=\int_Y P\log f\, d\mu_*\le \int_Y \log Pf\, d\mu_*.\]
        Thus $I(\mu_*)\le 0$ by \eqref{I-variantion formula}. Moreover, $\mu_*$ is the unique null point of $I$. In fact, $I^{-1}(0)$ contains exactly the equilibrium states for $V=0$, which should be unique as $0\in \mathcal{V}_{L,\delta}\subset \mathcal{V}$.
    \end{remark}

    \subsubsection{Proof of statement (c): non-triviality}\label{subsubsec_(c)}
    
    We first invoke hypothesis \hyperlink{AC}{$\color{black}\mathbf{(AC)}$} to reduce the problem to $Y$, and then show $\mathcal{V}_{L,\delta}\subset \mathcal{V}$ by exploiting asymptotics of Feynman--Kac semigroups on $C(Y)$. We also show that $\mathcal{W}_R$ is independent of $R$ in Corollary~\ref{VrestricY}.

    \vspace{3mm}
    
    \noindent\textit{Step 1: Reduction to compactum.} 

    Let us show that whether $V\in \mathcal{V}$ or not depends only on its restriction to $Y$.
    \begin{lemma}\label{YimplyX}
       Suppose that hypothesis \hyperlink{AC}{$\color{black}\mathbf{(AC)}$} is satisfied. Let $V\in L_b(X)$. If the functions
        \[X\ni x\mapsto \frac{1}{n}\log (Q_n^V \mathbf{1})(x)\]
        converge uniformly to $\log \lambda$ in $Y$ as $n\to \infty$ with a constant $\lambda>0$. Then this sequence also converges uniformly to $\log \lambda$ in $X$.
    \end{lemma}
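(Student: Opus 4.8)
\emph{Proof plan.} The plan is to establish the two-sided bound
\[ \lambda^{n}e^{-o(n)}\le (Q_n^V\mathbf 1)(x)\le \lambda^{n}e^{o(n)}\qquad\text{uniformly in }x\in X, \]
which gives the claim after dividing by $n$. Since $Y\subset X$ and $Y$ is invariant, the restriction of $Q_n^V\mathbf 1$ to $Y$ is the Feynman--Kac quantity of the process confined to $Y$, so by hypothesis $\lambda^{n}e^{-\varepsilon n}\le (Q_n^V\mathbf 1)(y)\le \lambda^{n}e^{\varepsilon n}$ on $Y$ once $n$ is large; the two inequalities to be proved for general $x\in X$ are symmetric, so I focus on the upper one. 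Throughout I use that on the bounded set $X=X_R$ hypothesis \hyperlink{AC}{$\color{black}\mathbf{(AC)}$} reads $\dist_X(S_k(x;\boldsymbol\zeta),Y)\le Ce^{-\kappa k}$ for a constant $C=C(R)$ and all $\boldsymbol\zeta$, that $S(\cdot,\zeta)$ is Lipschitz on $X$ with a constant $L\ge 1$ independent of $\zeta\in\mathcal E$ (the meaning of ``locally Lipschitz'' here), and that, $Y$ being compact, there is a Borel map $\pi\colon X\to Y$ with $d(x,\pi(x))\le 2\dist_X(x,Y)$.

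First I would set up a \emph{block comparison}. Fix $\varepsilon>0$ and, using the uniform convergence on $Y$, fix an integer $T=T(\varepsilon)$ so large that $\lambda^{T}e^{-\varepsilon T}\le (Q_T^V\mathbf 1)(y)\le\lambda^{T}e^{\varepsilon T}$ for every $y\in Y$. Given $x\in X$ write $x_k=S_k(x;\boldsymbol\xi)$, and for each block index $i\ge0$ set $\hat{x}^{(i)}_{0}:=\pi(x_{iT})\in Y$ and $\hat{x}^{(i)}_{j}:=S(\hat{x}^{(i)}_{j-1},\xi_{iT+j-1})$ for $1\le j\le T$; invariance of $Y$ keeps $\hat{x}^{(i)}_{j}\in Y$. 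The Lipschitz bound and \hyperlink{AC}{$\color{black}\mathbf{(AC)}$} give $d(x_{iT+j},\hat{x}^{(i)}_{j})\le L^{j}\,d(x_{iT},\pi(x_{iT}))\le 2L^{T}C\,e^{-\kappa iT}$ for $0\le j\le T$. Since $V\in L_b(X)$, summing $\Lip(V)$ times these distances over a full block contributes at most $2L^{T}CT\Lip(V)\,e^{-\kappa iT}$, and the series $\sum_{i}e^{-\kappa iT}$ converges; together with the at most $T-1$ leftover terms (each bounded by $\|V\|_\infty$) from an incomplete last block, this yields, writing $n=pT+r$ with $0\le r<T$, the \emph{deterministic} estimate
\[ \Big|\sum_{k=1}^{n}V(x_k)-\sum_{i=0}^{p-1}\sum_{j=1}^{T}V(\hat{x}^{(i)}_{j})\Big|\le C', \]
with $C'=C'(T,V,R)$ independent of $n$ and $x$.

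Next I would run a Markov telescoping over the blocks. For $i\le p-1$ the variable $\hat{x}^{(i)}_{0}=\pi(x_{iT})$ is $\mathcal F_{iT}$-measurable while $\xi_{iT},\dots,\xi_{(i+1)T-1}$ are i.i.d.\ and independent of $\mathcal F_{iT}$, so conditioning on $\mathcal F_{(p-1)T}$ peels off the last block:
\[ \mathbb E_x\Big[\exp\Big(\sum_{j=1}^{T}V(\hat{x}^{(p-1)}_{j})\Big)\,\Big|\,\mathcal F_{(p-1)T}\Big]=\big(Q_T^V\mathbf 1\big)\big(\pi(x_{(p-1)T})\big)\in\big[\lambda^{T}e^{-\varepsilon T},\ \lambda^{T}e^{\varepsilon T}\big]. \]
Iterating this for $i=p-1,p-2,\dots,0$ gives $(\lambda^{T}e^{-\varepsilon T})^{p}\le\mathbb E_x\exp\big(\sum_{i,j}V(\hat{x}^{(i)}_{j})\big)\le(\lambda^{T}e^{\varepsilon T})^{p}$. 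Combining with the deterministic estimate and $(Q_n^V\mathbf 1)(x)=\mathbb E_x\exp(\sum_{k=1}^{n}V(x_k))$, and absorbing the discrepancy between $\lambda^{pT}$ and $\lambda^{n}$ into a constant $C_1(T)$, I obtain
\[ e^{-C'}C_1(T)^{-1}\lambda^{n}e^{-\varepsilon n}\le(Q_n^V\mathbf 1)(x)\le e^{C'}C_1(T)\lambda^{n}e^{\varepsilon n}\qquad\text{for all }x\in X, \]
with all constants independent of $x$ and $n$. Dividing by $n$, letting $n\to\infty$ and then $\varepsilon\to0$ finishes the proof.

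The step I expect to be the crux is the block comparison. The tempting shortcut — split the trajectory once, note that $x_m$ lies within $Ce^{-\kappa m}$ of $Y$, and transfer the value of $Q_{n-m}^V\mathbf 1$ from $x_m$ to a nearby point of $Y$ — fails, because $Q_{n-m}^V\mathbf 1$ has Lipschitz constant growing like $L^{n-m}$, so there is no modulus of continuity uniform in $n$ and a single projection is uncontrollable. The block/renewal device is precisely what circumvents this: re-projecting onto $Y$ every $T$ steps (with $T$ fixed, but large enough for the Feynman--Kac convergence to apply) makes each comparison error carry only the \emph{bounded} factor $L^{T}$, while the distances $\dist_X(x_{iT},Y)\le Ce^{-\kappa iT}$ decay geometrically in the block index, so the total error is a finite constant regardless of $n$. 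The only mildly technical point is the measurable selection $\pi$, which is standard for the compact set $Y$.
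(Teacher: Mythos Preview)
Your proof is correct and follows essentially the same block-renewal idea as the paper: fix a block length $T=N(\varepsilon)$, project onto $Y$ at the start of each block, control the intra-block discrepancy via the Lipschitz constant $L^T$ of the dynamics, and use the geometric decay $\dist_X(x_{iT},Y)\le Ce^{-\kappa iT}$ from \hyperlink{AC}{$\mathbf{(AC)}$} to make the accumulated errors summable. The paper packages the same computation as a one-step multiplicative recursion $Q_n^V\mathbf 1(x)\ge(1-Ce^{-\kappa(n-N)})(\lambda-\varepsilon)^N Q_{n-N}^V\mathbf 1(x)$ (peeling blocks from the end and forming an infinite product), whereas you separate a deterministic pathwise comparison from the stochastic telescoping; these are cosmetic reorganizations of the same argument.
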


    \begin{proof}
        The estimates for upper and lower bounds are similar, hence we skip the upper bound and focus on the proof of the lower bound:
        \begin{equation}
            \liminf_{n\to \infty} \inf_{x\in X} \frac{1}{n}\log Q_n^V \mathbf{1}(x)\ge \log \lambda.\label{Lambda(V)lowerestimate}
        \end{equation}
        
        For any $0<\varepsilon<\lambda$, by the uniform convergence in $Y$, there is an $N=N(\varepsilon)\in \mathbb{N}$ such that
        \[Q_N^V \mathbf{1}(y)=\mathbb{E}_y e^{V(y_1)+\cdots +V(y_N)}\ge (\lambda-\varepsilon)^N\quad \text{for any }y\in Y.\]
        If $x\in X$, then there exists $y\in Y$ satisfying $\dist_X(x,Y)=d(x,y)$. By the  Lipschitz continuity of $S(\cdot,\zeta)$, for $k=1,\dots, N$, almost surely,
        \[d(x_k,y_k)=d(S(x_{k-1},\xi_{k-1}), S(y_{k-1},\xi_{k-1}))\le Cd(x_{k-1}, y_{k-1})\le C^k d(x,y).\]
        We keep the convention that various constants $C$ in the proof, which may differ from line to line, are independent of $x$ and $n$. Additionally, they may depend on other parameters such as $\varepsilon$ and $N$. Together with the Lipschitz continuity of $V$, we obtain that almost surely,
        \begin{align}
            &e^{V(x_1)+\cdots +V(x_N)}=e^{V(y_1)+\cdots +V(y_N)}+\left(e^{\sum_{k=1}^n (V(x_k)-V(y_k))}-1\right)e^{V(y_1)+\cdots +V(y_n)}\notag\\\ge &(\lambda-\varepsilon)^N-C\left|e^{\sum_{k=1}^N (V(x_k)-V(y_k))}-1\right|\ge (1-C\dist_X(x,Y)) (\lambda-\varepsilon)^N.\label{Nstepestimate}
        \end{align}
        
        For any $x\in X$ and $n\in \mathbb{N}$, suppose $mN\le n<(m+1)N$, and consider the expected value with respect to $\mathcal{F}_{n-N}$, where $\mathcal{F}_k$ is the $\sigma$-algebra generated by $\xi_1,\dots, \xi_k$. Taking \eqref{asymptotic compactness} and \eqref{Nstepestimate} into account, we obtain
        \begin{align}
            Q_n^V \mathbf{1}(x)&\ge \mathbb{E}_x \left(e^{V(x_1)+\cdots +V(x_{n-N})}\mathbb{E}(e^{V(x_{n-N+1})+\cdots +V(x_n)}|\mathcal{F}_{n-N})\right)\notag\\
            &\ge (\lambda-\varepsilon)^N\mathbb{E}_x ((1-C\dist_X(x_{n-N},Y))e^{V(x_1)+\cdots +V(x_{n-N})})\notag\\
            &\ge (1-Ce^{-\kappa (n-N)})(\lambda-\varepsilon)^N Q_{n-N}^V \mathbf{1}(x).\label{nton-Nestimate}
        \end{align}
        In order to repeat this procedure, we need $Ce^{-\kappa (n-N)}<1$. There is an integer $M\ge 1$ such that $Ce^{-\kappa k}<1$ provided $k\ge (M-1)N$. Thus we can repeat \eqref{nton-Nestimate} for $m-M$ times and attain
        \begin{equation*}
            Q_n^V \mathbf{1}(x)\ge R_n(\lambda-\varepsilon)^{(m-M)N}\mathbb{E}_x e^{V(x_1)+\cdots +V(x_{n-(m-M)N})}\ge CR_n(\lambda-\varepsilon)^n,
        \end{equation*}
        where $R_n$ denotes for
        \[R_n:=\prod_{k=1}^{m-M} (1-Ce^{\kappa(n-kN)})\ge \prod_{j=(M-1)N}^\infty (1-Ce^{-\kappa j})>0. \]
        As a result, it follows that $Q_n^V\mathbf{1}\ge C(\lambda-\varepsilon)^n$, which immediately yields \eqref{Lambda(V)lowerestimate}.
    \end{proof}

    Note that $Q_n^V$ can be defined on $C(Y)$, for $Y$ is invariant, and that $I(\sigma)=\infty$ if $\sigma\not \in \mathcal{P}(Y)$.
    
    \begin{definition}\label{def_V(Y)}
        The family $\mathcal{V}(Y)$ consists of $V\in L(Y)$ with the property that, there exists a unique $\sigma_V\in \mathcal{P}(Y)$ such that
        \[\frac{1}{n}\log Q_n^V \mathbf{1}\rightarrow\langle V,\sigma_V\rangle-I(\sigma_V)\quad \text{in }C(Y).\]
        In other words, this is equivalent to the following two conditions:
        \begin{enumerate}
            \item[(1)] The sequence of functions $\frac{1}{n}\log Q_n^V \mathbf{1}$ converges uniformly in $Y$ to a constant $\Lambda(V)$.

            \item[(2)] There exists a unique $\sigma_V\in \mathcal{P}(Y)$ (still called the equilibrium state), such that
            \[\Lambda(V)=\langle V,\sigma_V\rangle-I(\sigma_V).\]
        \end{enumerate}
    \end{definition}
    
    The following corollary is straightforward. For clarity we restore the label $R$ for the moment.
    \begin{corollary}\label{VrestricY}
         Suppose that hypothesis \hyperlink{AC}{$\color{black}\mathbf{(AC)}$} is satisfied. If $V\in L_b(X_R)$, then $V\in \mathcal{V}_R$ if and only if $V|_Y\in \mathcal{V}(Y)$. Moreover, $\mathcal{W}_R$ is independent of $R$, and can be characterized by
        \[\mathcal{W}_R=\{\sigma_V:V\in \mathcal{V}(Y)\}.\]
    \end{corollary}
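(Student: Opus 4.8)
The plan is to show that each of the two defining properties of $\mathcal{V}_R$ in Definition~\ref{def_mathcalV} depends on $V\in L_b(X_R)$ only through its restriction $V|_Y$, using the invariance of $Y$ under hypothesis \hyperlink{AC}{$\mathbf{(AC)}$} together with Lemma~\ref{YimplyX} and \eqref{D(I)subsetP(Y)}. The starting remark is that, since $Y$ is invariant and the noises satisfy $\xi_n\in\mathcal{E}$, a trajectory issued from a point of $Y$ stays in $Y$ almost surely; hence $(Q_n^V\mathbf 1)(y)=(Q_n^{V|_Y}\mathbf 1)(y)$ for every $y\in Y$, so the sequence $x\mapsto\frac1n\log(Q_n^V\mathbf 1)(x)$, restricted to $Y$, is unchanged if $V$ is replaced by $V|_Y$; in particular $Q_n^V$ is well defined on $C(Y)$.

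First I would treat property~(1). For $\theta=(n,x)\in\Theta_R$ one has $\mathbb{E}\,e^{r_\theta\langle V,L_\theta\rangle}=(Q_n^V\mathbf 1)(x)$, and since the partial order on $\Theta_R$ ignores the $X_R$-coordinate, convergence of the net $\frac{1}{r_\theta}\log\mathbb{E}\,e^{r_\theta\langle V,L_\theta\rangle}$ in \eqref{pressuredefinition} is equivalent to uniform convergence on $X_R$ of $\frac1n\log Q_n^V\mathbf 1$ to a constant; taking the supremum over $X_R$ and invoking \eqref{Lambda(V)andQ_nVconverge}, that constant must be $\Lambda_R(V)$. By Lemma~\ref{YimplyX}, together with the trivial reverse implication, $\frac1n\log Q_n^V\mathbf 1$ converges uniformly on $X_R$ to a constant if and only if it converges uniformly on $Y$ to a constant; by the starting remark the latter statement is exactly property~(1) of Definition~\ref{def_V(Y)} for $V|_Y$, with the common limiting value $\Lambda_R(V)=\Lambda(V|_Y)$.

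Next I would treat property~(2). By \eqref{D(I)subsetP(Y)}, every equilibrium state of $V$ (in the sense of $\mathcal{V}_R$) lies in $\mathcal{P}(Y)$; and for $\sigma\in\mathcal{P}(Y)$ we have $\langle V,\sigma\rangle=\langle V|_Y,\sigma\rangle$, while by statement \hyperlink{a}{(a)} the value $I_R(\sigma)$ is given by the variation formula \eqref{I-variantion formula}, hence is intrinsic to $Y$ and independent of $R$ — indeed it is precisely the quantity $I(\sigma)$ used in Definition~\ref{def_V(Y)}. Consequently, once property~(1) holds, the equation $\Lambda_R(V)=\langle V,\sigma\rangle-I_R(\sigma)$ characterizing equilibrium states of $V$ coincides verbatim with the equation $\Lambda(V|_Y)=\langle V|_Y,\sigma\rangle-I(\sigma)$ characterizing equilibrium states of $V|_Y$ in $\mathcal{V}(Y)$; so $V$ has a unique equilibrium state iff $V|_Y$ does, and then $\sigma_V=\sigma_{V|_Y}$. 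Combining with the previous paragraph gives $V\in\mathcal{V}_R\iff V|_Y\in\mathcal{V}(Y)$. For the remaining assertion, the restriction map $\mathcal{V}_R\to\mathcal{V}(Y)$, $V\mapsto V|_Y$, is surjective: any $W\in\mathcal{V}(Y)\subset L(Y)$ admits a bounded Lipschitz extension $\widetilde W\in L_b(X_R)$ with $\widetilde W|_Y=W$ (McShane's extension theorem, using that $X_R$ is bounded; cf.\ Appendix~\ref{subsec_McShane}), and then $\widetilde W\in\mathcal{V}_R$ by the equivalence just established, with $\sigma_{\widetilde W}=\sigma_W$. Hence $\mathcal{W}_R=\{\sigma_V:V\in\mathcal{V}_R\}=\{\sigma_W:W\in\mathcal{V}(Y)\}$, which manifestly does not depend on $R$.

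There is no genuinely new analytic input in this corollary: the substantive work sits in Lemma~\ref{YimplyX}, which transfers uniform convergence of the Feynman--Kac exponents from $Y$ to $X_R$. I expect the only delicate point to be the bookkeeping that the two pressure constants, and the two equilibrium-state equations, really agree on each side — in particular the $R$-independence of $I_R|_{\mathcal{P}(Y)}$, for which statement \hyperlink{a}{(a)} (the variation formula) is used — and correctly unpacking net-convergence over $\Theta_R$ into locally uniform convergence in the $x$-variable.
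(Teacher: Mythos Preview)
Your proof is correct and follows essentially the same approach as the paper: the paper's own proof simply cites Lemma~\ref{YimplyX} together with the fact that $I(\sigma)=\infty$ for $\sigma\notin\mathcal{P}(Y)$ to get the equivalence, and then observes that the equilibrium state for $V$ coincides with that for $V|_Y$ since it lies in $\mathcal{P}(Y)$. Your write-up spells out the bookkeeping more carefully (in particular, you make explicit the McShane extension step needed for surjectivity of $V\mapsto V|_Y$ and the use of statement~\hyperlink{a}{(a)} to identify $I_R|_{\mathcal{P}(Y)}$ across different $R$), but these are exactly the details the paper leaves implicit.
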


    \begin{proof}
        The first assertion follows from Lemma~\ref{YimplyX}, and that $I(\sigma)=\infty$ for any $\sigma\not \in \mathcal{P}(Y)$. The second assertion is due to the fact that the equilibrium state for $V$ is the same as the one for $V|_Y$, since the equilibrium state $\sigma_V\in \mathcal{P}(Y)$.
    \end{proof}

    Let us define $\mathcal{V}_{L,\delta}(Y)$ by
    \[\mathcal{V}_{L,\delta}(Y):=\{V\in L(Y):\Lip(V)<L,\ \Osc(V)<\delta\}.\]
    To finish the proof of Theorem~\ref{Thm LDP}, it suffices to show $\mathcal{V}_{L,\delta}(Y)\subset \mathcal{V}(Y)$ for $\delta$ sufficiently small.

    \vspace{3mm}
    
    \noindent\textit{Step 2: Long-time behaviour of Feynman--Kac semigroups}
    
    The proof of $\mathcal{V}_{L,\delta}(Y)\subset \mathcal{V}(Y)$ can be further reduced to the study of asymptotics of the Feynman--Kac semigroups. From now on, $Q_n^V$ is viewed as an operator on $C(Y)$. The following proposition will be demonstrated in Appendix~\ref{sec_verify}.
    
    \begin{proposition}\label{Prop Feynman-Kac stability} Under the assumptions of Theorem {\rm\ref{Thm LDP}}, given $L>0$ one can find $\delta>0$ with the following properties: for any $V\in \mathcal{V}_{L,\delta}(Y)$, there exists $\lambda_V>0$, $h_V\in C_+(Y)$ and $\mu_V\in\mathcal{P}(Y)$ with $\supp(\mu_V)=\mathcal{A}$, such that for any $f\in C(Y)$ and $\nu\in\mathcal{M}_+(Y)$,
    \begin{align}
        &Q^{V}h_V=\lambda_Vh_V,\quad  Q^{V*}\mu_V=\lambda_V\mu_V,\quad\langle h_V,\mu_V\rangle=1,\label{Feynman--Kac invariance}\\
        &\lambda_V^{-n}Q_n^{V}f\rightarrow\langle f,\mu_V\rangle h_V\quad\text{ in }C(Y)\text{ as }n\rightarrow\infty,\label{Feynman--Kac f-convergence}\\
        &\lambda_V^{-n}Q_n^{V*}\nu\rightarrow\langle h_V,\nu\rangle \mu_V\quad\text{ in }\mathcal{M}_+(Y)\text{ as }n\rightarrow\infty.\label{Feynman--Kac m-convergence}
    \end{align}
    \end{proposition}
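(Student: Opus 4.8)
# Proof Proposal for Proposition~\ref{Prop Feynman-Kac stability}

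The plan is to obtain the Perron--Frobenius-type conclusions \eqref{Feynman--Kac invariance}--\eqref{Feynman--Kac m-convergence} as an application of a multiplicative ergodic theorem for the Feynman--Kac semigroup $Q_n^V$ acting on $C(Y)$, following the strategy developed in \cite{JNPS-15} and refined in \cite{MN-20}. The essential input is a uniform irreducibility/uniform Feller pair of conditions for $Q^V$, which must be deduced from hypotheses \hyperlink{I}{$\mathbf{(I)}$} and \hyperlink{C}{$\mathbf{(C)}$} together with the smallness $\Osc(V)<\delta$. Concretely, I would first record that since $Y$ is compact invariant and $V\in L(Y)$, the operator $Q^V$ is a bounded positive operator on $C(Y)$ with $e^{\inf V}P \le Q^V \le e^{\sup V}P$, so all quantities stay comparable to the Markov semigroup $P$ up to the multiplicative factor $e^{\Osc(V)}$.

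The key steps, in order, are as follows. \textbf{Step 1 (Uniform irreducibility of $Q^V$ toward $z$).} From hypothesis \hyperlink{I}{$\mathbf{(I)}$}, for any $\varepsilon>0$ there is $N$ with $\inf_{y\in Y}P_N(y,B_Y(z,\varepsilon))>0$; multiplying through by the Feynman--Kac weight, which is bounded below by $e^{-N\sup V}$, yields $\inf_{y\in Y}(Q_N^V\mathbf{1}_{B_Y(z,\varepsilon)})(y)>0$. This gives the irreducibility needed to force uniqueness of the eigenmeasure. \textbf{Step 2 (Uniform contraction via the coupling condition).} Here is where $\Osc(V)<\delta$ enters: one wants a Doeblin-type or contraction estimate for the normalized semigroup. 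Using hypothesis \hyperlink{C}{$\mathbf{(C)}$}, two trajectories issuing from $x,x'\in Y$ can be coupled so that with high probability the distance contracts by factor $q$; combined with \eqref{g-def}, iterating gives a geometric rate of coupling, hence a uniform estimate on $\|P_n(x,\cdot)-P_n(x',\cdot)\|_L^*$. When the weight $e^{V(x_1)+\cdots+V(x_n)}$ oscillates by at most $e^{n\delta}$, this coupling estimate transfers — for $\delta$ small relative to the coupling rate $\gamma$ from Theorem~\ref{Thm Exponential mixing} — to a spectral gap for $Q_n^V$ in an appropriate weighted norm. \textbf{Step 3 (Construct the eigendata).} Apply the multiplicative ergodic theorem (as in \cite[Section~4]{MN-20} or \cite[Theorem~4.1]{JNPS-15}): the uniform Feller property of $Q^V$ (inherited from local Lipschitz continuity of $S$) together with Steps~1--2 yields a unique eigenvalue $\lambda_V>0$, a positive eigenfunction $h_V\in C_+(Y)$, and an eigenmeasure $\mu_V\in\mathcal{P}(Y)$ with $Q^Vh_V=\lambda_V h_V$, $Q^{V*}\mu_V=\lambda_V\mu_V$, normalized by $\langle h_V,\mu_V\rangle=1$, and the convergences \eqref{Feynman--Kac f-convergence}--\eqref{Feynman--Kac m-convergence} hold with a uniform geometric rate. \textbf{Step 4 (Support of $\mu_V$).} Since $\mu_V$ is a fixed point of $\lambda_V^{-1}Q^{V*}$ and $Q^{V*}$ dominates a positive multiple of $P^*$, the support of $\mu_V$ is invariant under $P^*$-dynamics and, by the irreducibility to $z$ from Step~1, must contain $z$; then invariance under the (deterministic) attainable maps forces $\supp(\mu_V)\supseteq \mathcal{A}$, while $\mathcal{A}\subset Y$ is the minimal such set, giving $\supp(\mu_V)=\mathcal{A}$ exactly as for $\mu_*$ in Theorem~\ref{Thm Exponential mixing}.

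The main obstacle I anticipate is Step~2: propagating the coupling/contraction estimate of hypothesis \hyperlink{C}{$\mathbf{(C)}$} through the exponential Feynman--Kac weight and extracting an honest spectral gap for $Q_n^V$, uniformly over the whole ball $\mathcal{V}_{L,\delta}(Y)$, with $\delta=\delta(L)$ chosen independently of the particular $V$. The difficulty is that $\Lip(V)<L$ controls how fast the weight can vary along a coupled pair, but one needs the gain from coupling (rate $\gamma$) to strictly beat the loss from the weight oscillation; this forces a quantitative choice $\delta < c\gamma$ for an explicit constant $c$, and one must check the weighted-norm estimates close under the finitely many iterations needed. This is precisely the technical heart deferred to Appendix~\ref{sec_verify}, where the multiplicative ergodic machinery of \cite{JNPS-15,MN-20} is invoked; the remaining steps are comparatively routine once the uniform gap is in hand.
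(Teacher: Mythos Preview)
Your outline has the right references and the right flavor, but it misses a structural point the paper's proof hinges on. The multiplicative ergodic theorem of \cite{JNPS-15} (Theorem~\ref{Thm Feynman-Kac} here) requires \emph{full} uniform irreducibility on the space where it is applied: every ball must be reachable from every point. On $Y$ this fails in general---hypothesis \hyperlink{I}{$\mathbf{(I)}$} only gives reachability of $z$, hence of $\mathcal{A}$, from every $y\in Y$, not reachability of an arbitrary $y'\in Y$ (Example~\ref{toymodel}, with $Y=\{0,1\}$ and $1$ unreachable from $0$, is a concrete obstruction). The paper therefore first applies Theorem~\ref{Thm Feynman-Kac} on the invariant compactum $\mathcal{A}$, where genuine two-sided irreducibility does hold (any $a\in\mathcal{A}$ is reachable from $z$ by construction, and $z$ from any $b\in\mathcal{A}\subset Y$ by \hyperlink{I}{$\mathbf{(I)}$}, using also $S(z,\zeta)=z$); this already produces $\lambda_V,h_V|_{\mathcal{A}},\mu_V$ with $\supp(\mu_V)=\mathcal{A}$. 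The extension from $\mathcal{A}$ to $Y$ is a separate step via Theorem~\ref{Thm Feynman-Kac_Y} from \cite{MN-20}, and it needs two further inputs absent from your plan: concentration of $Q_n^V$ near $\mathcal{A}$ (from exponential mixing, requiring $\Osc(V)<\gamma$) and exponential boundedness $\sup_n\lambda_V^{-n}\|Q_n^V\mathbf{1}\|_{L^\infty(Y)}<\infty$ (nontrivial; it uses uniform Feller plus a hitting-time estimate for $B_Y(\mathcal{A},r)$).

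A second issue: the uniform Feller property is \emph{not} ``inherited from local Lipschitz continuity of $S$''. Lipschitz continuity alone gives only $d(x_n,x_n')\le C^n d(x,x')$, useless for large $n$. Equicontinuity of $\|Q_n^V\mathbf{1}\|_\infty^{-1}Q_n^Vf$ uniformly in $n$ is precisely where the coupling \hyperlink{C}{$\mathbf{(C)}$} is spent: one runs the coupled chain, bounds the contribution of each decoupling event $C_k$ by $e^{(k+1)\Osc(V)}g(q^k d(x,x'))$, and needs $\Osc(V)<\delta_1:=-\limsup_k k^{-1}\log g(q^k)$ for the sum to converge (Lemma~\ref{lemma_uf}). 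This yields equicontinuity, not a spectral gap; the spectral conclusions then follow from the abstract theorems once irreducibility and uniform Feller are in place. Your Step~2 invokes the coupling but aims at the wrong target, and your Step~3 assigns uniform Feller to the wrong hypothesis.
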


    \begin{remark}
        The eigenvalue $\lambda_V$ and eigenvectors $\mu_V$ and $h_V$ satisfying \eqref{Feynman--Kac invariance}-\eqref{Feynman--Kac m-convergence} are unique. In fact, since $h_V\in C_+(Y)$, taking $f=\mathbf{1}$ in \eqref{Feynman--Kac f-convergence} leads to
        \begin{equation}
            \frac{1}{n}\log Q_n^V \mathbf{1}\to \log \lambda_V\quad \text{in }C(Y)\text{ as }n\to \infty.\label{convergenceonY}
        \end{equation}
        So $\lambda_V=e^{\Lambda(V)}$ is uniquely determined. Moreover, again by \eqref{Feynman--Kac f-convergence}, $\lambda_V^{-n}Q_n^V \mathbf{1}\to h_V$ in $C(Y)$. Similarly the uniqueness of $\mu_V$ follows from \eqref{Feynman--Kac m-convergence}.
    \end{remark}

    Let us assume for the moment the validity of Proposition~\ref{Prop Feynman-Kac stability}, and justify $\mathcal{V}_{L,\delta}(Y)\subset \mathcal{V}(Y)$.

    \begin{lemma}\label{nontrivialonY}
        Under the assumptions of Theorem {\rm\ref{Thm LDP}}, given any $L>0$, let $\delta>0$ be specified as in Proposition~{\rm\ref{Prop Feynman-Kac stability}}, then $\mathcal{V}_{L,\delta}(Y)\subset \mathcal{V}(Y)$.
    \end{lemma}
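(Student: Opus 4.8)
The plan is to deduce Lemma~\ref{nontrivialonY} directly from Proposition~\ref{Prop Feynman-Kac stability} by unpacking Definition~\ref{def_V(Y)}. Fix $L>0$, let $\delta>0$ be as in Proposition~\ref{Prop Feynman-Kac stability}, and take any $V\in\mathcal{V}_{L,\delta}(Y)$. We then have at our disposal the eigenvalue $\lambda_V>0$, the eigenfunction $h_V\in C_+(Y)$, and the eigenmeasure $\mu_V\in\mathcal{P}(Y)$ with $\supp(\mu_V)=\mathcal{A}$, satisfying \eqref{Feynman--Kac invariance}--\eqref{Feynman--Kac m-convergence}. First I would verify condition (1) of Definition~\ref{def_V(Y)}: taking $f=\mathbf{1}$ in \eqref{Feynman--Kac f-convergence} gives $\lambda_V^{-n}Q_n^V\mathbf{1}\to\langle\mathbf{1},\mu_V\rangle h_V=h_V$ in $C(Y)$, and since $h_V$ is bounded away from $0$ and $\infty$ on the compact set $Y$, taking logarithms and dividing by $n$ yields $\frac{1}{n}\log Q_n^V\mathbf{1}\to\log\lambda_V$ uniformly on $Y$. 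Hence $\Lambda(V)=\log\lambda_V$ (consistent with \eqref{convergenceonY}).

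Next I would identify the equilibrium state and establish its uniqueness, which is condition (2). The natural candidate is $\sigma_V:=h_V\mu_V$, i.e.~$\langle f,\sigma_V\rangle=\int_Y f h_V\,d\mu_V$; note $\sigma_V\in\mathcal{P}(Y)$ because $\langle h_V,\mu_V\rangle=1$ and $h_V>0$. To see $\sigma_V$ is an equilibrium state, I would use the entropy formula: by statement~(a) (the variation formula $I=I'$, valid since \hyperlink{AC}{$\mathbf{(AC)}$} holds), for any $g\in C_+(Y)$,
\begin{equation*}
I(\sigma_V)\ge \langle V,\sigma_V\rangle-\Lambda(V)+\Big\langle V-\log\tfrac{Pg}{g},\sigma_V\Big\rangle\cdot(\text{sign considerations}),
\end{equation*}
but cleaner is to plug $g=h_V$ directly: since $Q^Vh_V=\lambda_V h_V$ means $P(e^V h_V)=\lambda_V h_V$, one has $\log\frac{P(e^Vh_V)}{h_V}=\log\lambda_V$, so writing $\tilde f = e^V h_V\in C_+(Y)$ and using $I=I'$ with the test function $\tilde f$ gives $I(\sigma_V)\le -\int_Y\log\frac{P\tilde f}{\tilde f}\,d\sigma_V = -\int_Y(\log\lambda_V - V)\,d\sigma_V=\langle V,\sigma_V\rangle-\Lambda(V)$; combined with the general lower bound $I(\sigma_V)\ge\langle V,\sigma_V\rangle-\Lambda(V)$ coming from the definition of $I$ as a Legendre transform, this yields equality, so $\sigma_V$ is an equilibrium state. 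For uniqueness, suppose $\sigma'$ is another equilibrium state, so $I(\sigma')=\langle V,\sigma'\rangle-\Lambda(V)$ and $\sigma'\in\mathcal{P}(Y)$ by \eqref{D(I)subsetP(Y)}. Writing $\sigma'=\sigma''/h_V \cdot$(normalization) is awkward; instead I would run the standard argument: the entropy formula forces $\log\frac{P(e^Vh_V)}{h_V}$-type equality $\sigma'$-a.e., or more robustly, use that $Q_n^{V*}$-dynamics together with \eqref{Feynman--Kac m-convergence} pins down any $\sigma'$ with $I(\sigma')<\infty$ realizing the equilibrium; the measure $\nu'=\sigma'/h_V$ (a positive measure since $h_V>0$) would have to satisfy $Q^{V*}\nu'=\lambda_V\nu'$, and the uniqueness clause for $\mu_V$ forces $\nu'=c\mu_V$, hence $\sigma'=c h_V\mu_V=\sigma_V$ after normalization.

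I expect the uniqueness of the equilibrium state to be the main obstacle, since existence and the pressure-convergence are essentially immediate from Proposition~\ref{Prop Feynman-Kac stability}. The cleanest route is probably the one via the Donsker--Varadhan entropy identity: if $\sigma'$ is an equilibrium state, then equality holds in the convexity inequality $\int_Y P\log g\,d\sigma'\le\int_Y\log Pg\,d\sigma'$ with $g=\tilde f=e^V h_V$, which by strict convexity of $\log$ forces $P$-related rigidity; one then uses irreducibility \hyperlink{I}{$\mathbf{(I)}$} to propagate this rigidity and conclude $\sigma'$ is uniquely determined. Alternatively, one cites that this uniqueness is already part of the multiplicative ergodic structure underlying Proposition~\ref{Prop Feynman-Kac stability} (the eigenvector $\mu_V$ is unique up to scaling), and derives it as a corollary. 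Either way, once uniqueness is in hand, both conditions of Definition~\ref{def_V(Y)} hold with $\Lambda(V)=\log\lambda_V$ and $\sigma_V=h_V\mu_V$, so $V\in\mathcal{V}(Y)$, completing the proof.
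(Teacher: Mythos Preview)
Your treatment of condition~(1) in Definition~\ref{def_V(Y)} is correct and matches the paper. Your verification that $h_V\mu_V$ is \emph{an} equilibrium state, by plugging $\tilde f=e^Vh_V$ into the entropy formula and comparing with the Legendre lower bound, is clean and correct.

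The gap is in uniqueness. You assert that for any equilibrium state $\sigma'$ the measure $\nu'=\sigma'/h_V$ ``would have to satisfy $Q^{V*}\nu'=\lambda_V\nu'$'', but you give no argument for this, and it is exactly the nontrivial step. The convergence~\eqref{Feynman--Kac m-convergence} applied to $\nu'$ only yields $\lambda_V^{-n}Q_n^{V*}\nu'\to\langle h_V,\nu'\rangle\mu_V$, which says nothing without first knowing $\nu'$ is an eigenvector. Likewise, citing ``the uniqueness clause for $\mu_V$'' from Proposition~\ref{Prop Feynman-Kac stability} is circular: that proposition gives uniqueness of the \emph{eigenmeasure}, not of the \emph{equilibrium state}, and bridging the two is the content of the lemma.

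The paper closes this gap by a variational argument. Since $\sigma'$ is an equilibrium state, the infimum in the Donsker--Varadhan formula $I(\sigma')=-\inf_{f\in C_+(Y)}\int_Y\log(Pf/f)\,d\sigma'$ equals $\Lambda(V)-\langle V,\sigma'\rangle$ and is attained at $f=e^Vh_V$. The first-order optimality condition at this minimizer reads
\[
\int_Y\frac{Pg}{Pf}\,d\sigma'=\int_Y\frac{g}{f}\,d\sigma'\qquad\text{for all }g\in C(Y).
\]
Substituting $g\mapsto fg$ and using $Pf=\lambda_Vh_V$, $P(fg)=Q^V(h_Vg)$, this becomes $\int_Y Tg\,d\sigma'=\int_Y g\,d\sigma'$ for the conjugated Markov semigroup $Tg:=\lambda_V^{-1}h_V^{-1}Q^V(h_Vg)$, i.e.\ $T^*\sigma'=\sigma'$. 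Since $T$ has unique invariant measure $h_V\mu_V$ (a direct consequence of~\eqref{Feynman--Kac f-convergence}), uniqueness follows. Your route~(a) is essentially this argument with the key variational step omitted; your route~(b) via strict convexity of $\log$ would at best yield pointwise information about $P(y,\cdot)$ for $\sigma'$-a.e.\ $y$ and does not obviously lead to $\sigma'=h_V\mu_V$ without further work.
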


    \begin{proof}
        Fix any $V\in \mathcal{V}_{L,\delta}(Y)$. Recall the two conditions in Definition~\ref{def_V(Y)}. Clearly the first condition is a consequence of \eqref{convergenceonY}. Next we check the uniqueness of the equilibrium state. The idea originates from \cite{DV-75}. By virtue of a variation method, it turns out that 
    \[\sigma_V=h_V\mu_V,\]
    where $h_V$ and $\mu_V$ are the eigenvectors for $Q^V$ and $Q^{V*}$ in Proposition~\ref{Prop Feynman-Kac stability}, respectively. Since this argument is already known, we only sketch the proof.
    
    Define the semigroup $T_n$ on $C(Y)$ by
    \[T_nf:=\lambda_V^{-n} h_V^{-1} Q_n^V (h_Vf).\]
    It is easy to see $T_n$ is a Markov semigroup, and
    \begin{equation*}
        T_n^*\sigma=\lambda_V^{-n} h_V Q_n^{V*} (h_V^{-1}\sigma)
    \end{equation*}
    A combination of the following two assertions readily leads to $\sigma_V=h_V\mu_V$: \begin{enumerate}
        \item[(1)] The unique invariant measure of $T^*$ is $h_V\mu_V$.

        \item[(2)] The equilibrium state $\sigma_V$ is an invariant measure of $T^*$.
    \end{enumerate}
    In fact, (1) is simple since $T$ is a conjugation of $Q^V$. As for (2), note that
    \[\Lambda(V)=\langle V,\sigma_V\rangle-I(\sigma_V)=\langle V,\sigma_V\rangle+\inf_{f\in C_+(Y)} \int_Y \log \left(\frac{Pf}{f}\right)\, d\sigma_V.\]
    One can check by manipulation that the infimum on the right-hand side is attained at $f=e^V h_V$. Taking variation with respect to any $g\in C(Y)$, we get
    \[\int_Y \frac{Pg}{Pf}\, d\sigma_V=\int_Y \frac{g}{f}\, d\sigma_V.\]
    Replacing $g$ by $fg$, thanks to $Pf=\lambda_V h_V$ and $P(fg)=Q^V (h_V g)$, we conclude
    \[\int_Y Tg\, d\sigma_V=\int_Y g\, d\sigma_V.\]
    Thus $T^* \sigma_V=\sigma_V$ and (2) follows.
    \end{proof}

    So far we have shown that the proof of $\mathcal{V}_{L,\delta}(Y)\subset \mathcal{V}(Y)$ can be reduced to Proposition~\ref{Prop Feynman-Kac stability}. Indeed, $Q_n^V$ is a special case of the generalized Markov semigroups introduced in Appendix~\ref{sec_multiplicativeergodic}. Then we manage to exploit multiplicative ergodic theorems (Theorem~\ref{Thm Feynman-Kac} and Theorem~\ref{Thm Feynman-Kac_Y}), established in \cite{JNPS-15,MN-20}. We follow the arguments from \cite{KS-00,JNPS-15,MN-20} and present the proof in the appendix; see Appendix~\ref{sec_verify}.
    
    \subsubsection{Local LDP on the whole space}

    To finish the demonstration of Theorem~\ref{Thm LDP}, we combine some simple observations to extend the LDP from bounded sets $X_R$ to the whole space $\mathcal{X}$.

    \begin{proof}[Proof of local LDP on $\mathcal{X}$]
        For any closed set $F\subset \mathcal{P}(\mathcal{X})$, since $F\cap \mathcal{P}(X_R)$ is a closed subset in $\mathcal{P}(X_R)$, and the rate function $\mathcal{I}=I_R$ on $\mathcal{P}(X_R)$, we have
    \begin{align*}
        &\limsup_{n\to \infty}\sup_{\boldsymbol{u}\in X_R} \frac{1}{n}\log \mathbb{P}(L_{n,\boldsymbol{u}}\in F)=\limsup_{n\to \infty} \sup_{\boldsymbol{u}\in X_R} \frac{1}{n}\log \mathbb{P}(L_{x,\boldsymbol{u}}\in F\cap \mathcal{P}(X_R))\\
        \le &-\inf \{I_R(\sigma):\sigma\in F\cap \mathcal{P}(X_R)\}\le -\inf \{\mathcal{I}(\sigma):\sigma\in F\}.
    \end{align*}
    This gives us the LDP upper bound \eqref{level2XR_upperbound}, which is uniform with respect to $x\in X_R$.
    
    For the lower bound \eqref{level2XR_lowerbound}, since $\mathcal{W}\subset \mathcal{P}(Y)\subset \mathcal{P}(X_R)\subset\mathcal{P}(\mathcal{X})$, for any open set $G\subset \mathcal{P}(\mathcal{X})$, 
    \begin{align*}
        &\liminf_{n\to \infty}\inf_{x\in X_R} \frac{1}{n}\log \mathbb{P}(L_{n,x}\in G)=\liminf_{n\to \infty} \inf_{x\in X_R} \frac{1}{n}\log \mathbb{P}(L_{n,x}\in G\cap \mathcal{P}(X_R))\\
        \ge &-\inf \{I_R(\sigma):\sigma\in G\cap \mathcal{P}(X_R)\cap \mathcal{W}\}=-\inf \{\mathcal{I}(\sigma):\sigma\in G\cap \mathcal{W}\}.\qedhere
    \end{align*}
    \end{proof}
    
    Now the proof of Theorem~\ref{Thm LDP} is complete.

    \vspace{2cm}
    \section*{Appendix}\label{sec_appendix}

    \stepcounter{section}
    
    \numberwithin{equation}{subsection}
    \numberwithin{theorem}{subsection}
    
    \setcounter{equation}{0}
    \setcounter{theorem}{0}
    
    \renewcommand\thesubsection{\Alph{subsection}}

    \stepcounter{subsection}

    \noindent\Alph{subsection}. \,\textbf{Some auxiliary results.}
    \subsubsection{Probability measures in a subset}\label{subsec_McShane}

    For the sake of rigour, we include a discussion on the dual-Lipschitz norm in a subset. The following result on the extension of Lipschitz functions is sometimes called McShane's lemma. To specify the domain, we denote
    \[\Lip(f;Y):=\sup_{y,y'\in Y} \frac{|f(y)-f(y')|}{d(y,y')}.\]

    \begin{lemma}\label{McShane}
        Let $(X,d)$ be a metric space. For any closed subset $Y$ and $f\in L(Y)$, there exists a function $F\in L(X)$, such that 
        \[F|_Y=f,\quad \Lip(F;X)=\Lip(f;Y)\quad \text{and}\quad \inf_Y f\le F(x)\le \sup_Y f.\] In particular, if $f\in L_b(Y)$, then $F\in L_b(X)$ and $\|F\|_{L_b(X)}=\|f\|_{L_b(Y)}$.
    \end{lemma}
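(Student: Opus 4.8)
The plan is to use the classical inf\nobreakdash-convolution (McShane--Whitney) extension and then truncate it. Assuming $Y\neq\emptyset$ (otherwise the statement is vacuous), set $L:=\Lip(f;Y)$, which is finite by hypothesis, and define
\[
F_0(x):=\inf_{y\in Y}\bigl(f(y)+L\,d(x,y)\bigr),\qquad x\in X.
\]
First I would record the three elementary properties of $F_0$. Since $L\,d(x,y)\ge 0$, one has $F_0(x)\ge\inf_Y f$ for every $x\in X$. For $x\in Y$, taking $y=x$ in the infimum gives $F_0(x)\le f(x)$, whereas the Lipschitz bound $f(y)+L\,d(x,y)\ge f(x)$ for all $y\in Y$ gives $F_0(x)\ge f(x)$; hence $F_0|_Y=f$. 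Finally, for $x,x'\in X$ and any $y\in Y$ we have $f(y)+L\,d(x,y)\le f(y)+L\,d(x',y)+L\,d(x,x')$, and taking the infimum over $y$ yields $F_0(x)\le F_0(x')+L\,d(x,x')$; by symmetry $\Lip(F_0;X)\le L$.

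The one property the bare construction need not deliver is the upper bound $F\le\sup_Y f$, so the next step is to truncate: put $F:=\min\{F_0,\ \sup_Y f\}$ when $\sup_Y f<\infty$, and $F:=F_0$ otherwise (in which case the upper constraint is vacuous). As $t\mapsto\min\{t,c\}$ is $1$-Lipschitz, $\Lip(F;X)\le\Lip(F_0;X)\le L$. On $Y$ we still have $F=f$, because $f\le\sup_Y f$ there; consequently $\Lip(F;X)\ge\Lip(F;Y)=\Lip(f;Y)=L$, and the two bounds combine to the asserted equality $\Lip(F;X)=\Lip(f;Y)$. For the lower bound, $F\ge\min\{\inf_Y f,\ \sup_Y f\}=\inf_Y f$, so altogether $\inf_Y f\le F(x)\le\sup_Y f$ for all $x\in X$, and $F\in L(X)$.

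For the ``in particular'' assertion, if $f\in L_b(Y)$ then $\inf_Y f$ and $\sup_Y f$ are finite, so the sandwich $\inf_Y f\le F\le\sup_Y f$ forces $\|F\|_\infty\le\sup_Y|f|=\|f\|_\infty$, while $F|_Y=f$ gives the reverse inequality; hence $\|F\|_\infty=\|f\|_\infty$, and adding the already established equality of Lipschitz constants gives $\|F\|_{L_b(X)}=\|f\|_{L_b(Y)}$. I do not expect a genuine obstacle here; the only point that needs care is precisely the truncation, since the inf\nobreakdash-convolution alone can exceed $\sup_Y f$ when $x$ is far from $Y$ relative to the oscillation of $f$, and one must check that clipping at $\sup_Y f$ neither destroys the interpolation on $Y$ nor enlarges the Lipschitz constant.
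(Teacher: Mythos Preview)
Your proof is correct and follows exactly the approach the paper takes: define the McShane inf-convolution $F_0(x)=\inf_{y\in Y}\{f(y)+\Lip(f;Y)\,d(x,y)\}$ and then truncate. The only cosmetic difference is that the paper writes the truncation as $\min\{\max\{F_0,\inf_Y f\},\sup_Y f\}$, whereas you observe that $F_0\ge\inf_Y f$ already holds and so only clip from above; the extra $\max$ in the paper's version is harmless but redundant.
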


    \begin{proof}
        One can check the following $F$ meets the first two requirements:
        \[F(x):=\inf\{f(y)+\Lip(f;Y)d(x,y):y\in Y\}.\]
        Then replace $F$ by $\min\{\max\{F,\inf_Y f\},\sup_Y f\}$.
    \end{proof}

    As a consequence, the dual-Lipschitz distance is not affected by restricting to a closed subspace, as long as it includes the support of the measure. The proof is trivial and skipped.
        
    \begin{corollary}
        Let $X$ be a Polish space, and $Y\subset X$ is a closed subset. Then
        \[\|\mu-\nu\|_{L(Y)}^*=\|\mu-\nu\|_{L(X)}^*,\]
        for any $\mu,\nu\in \mathcal{P}(Y)\subset \mathcal{P}(X)$.
    \end{corollary}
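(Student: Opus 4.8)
The plan is to establish the two inequalities $\|\mu-\nu\|_{L(Y)}^*\le\|\mu-\nu\|_{L(X)}^*$ and $\|\mu-\nu\|_{L(X)}^*\le\|\mu-\nu\|_{L(Y)}^*$ separately. The only structural fact needed throughout is that, since $\mu,\nu\in\mathcal{P}(Y)$ viewed inside $\mathcal{P}(X)$, the integral $\langle g,\mu\rangle$ depends only on the restriction $g|_Y$ for any $g\in C_b(X)$; this is immediate from the definition of the inclusion $\mathcal{P}(Y)\hookrightarrow\mathcal{P}(X)$ (the measure is concentrated on $Y$).

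For the first inequality, I would take an arbitrary $f\in L_b(Y)$ with $\|f\|_{L(Y)}\le 1$ and invoke McShane's lemma (Lemma~\ref{McShane}) to produce $F\in L_b(X)$ with $F|_Y=f$ and $\|F\|_{L(X)}=\|f\|_{L(Y)}\le 1$. Because $\mu$ and $\nu$ are supported in $Y$, one has $\langle f,\mu\rangle=\langle F,\mu\rangle$ and $\langle f,\nu\rangle=\langle F,\nu\rangle$, hence $|\langle f,\mu\rangle-\langle f,\nu\rangle|=|\langle F,\mu\rangle-\langle F,\nu\rangle|\le\|\mu-\nu\|_{L(X)}^*$. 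Taking the supremum over all such $f$ gives $\|\mu-\nu\|_{L(Y)}^*\le\|\mu-\nu\|_{L(X)}^*$. For the reverse inequality, I would start from an arbitrary $F\in L_b(X)$ with $\|F\|_{L(X)}\le 1$ and simply restrict: $f:=F|_Y$ satisfies $\Lip(f;Y)\le\Lip(F;X)$ and $\|f\|_\infty\le\|F\|_\infty$, so $\|f\|_{L(Y)}\le\|F\|_{L(X)}\le 1$; again $|\langle F,\mu\rangle-\langle F,\nu\rangle|=|\langle f,\mu\rangle-\langle f,\nu\rangle|\le\|\mu-\nu\|_{L(Y)}^*$, and taking the supremum over $F$ yields $\|\mu-\nu\|_{L(X)}^*\le\|\mu-\nu\|_{L(Y)}^*$. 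Combining the two proves the claimed equality.

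There is essentially no obstacle in this argument; it is a bookkeeping exercise. The only point worth a word of care is that the nontrivial direction (extending test functions while controlling the Lipschitz norm) is handled entirely by McShane's lemma, whereas the seemingly harder inclusion $\|\mu-\nu\|_{L(X)}^*\le\|\mu-\nu\|_{L(Y)}^*$ is in fact the trivial one, since restricting a test function can only decrease both its supremum norm and its Lipschitz constant.
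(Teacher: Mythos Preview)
Your proof is correct and is precisely the argument the paper has in mind: the corollary is stated immediately after McShane's lemma, and the paper simply writes ``The proof is trivial and skipped,'' so your two-inequality bookkeeping via extension (McShane) and restriction is exactly the intended route.
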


    Hence the natural inclusion $\mathcal{P}(Y)\subset \mathcal{P}(X)$ is continuous with respect to the weak-* topology. In particular, if $Y$ is compact, then $\mathcal{P}(Y)$ is a compact subset of $\mathcal{P}(X)$.

    \subsubsection{Central limit theorems for Markov processes}\label{sec_clt}
    
    Let $(x_n)_{n\in \mathbb{N}_0}$ be a Markov chain on the Polish space $X$, which is assumed to be exponentially mixing in the sense of \eqref{Exponential mixing}. 
    \begin{proposition}\label{prop CLT}
        Under the above assumptions, if $\mathscr{D}(x_0)=\mu_*$, then for any $f\in L_b(X)$,
        \begin{equation}\label{CLT}
            \mathscr{D} \left(\frac{1}{\sqrt{n}}\sum_{k=1}^{n}(f(x_k)-\langle f,\mu_*\rangle)\right)\Rightarrow N(0,\sigma_f^2)\quad\text{as }n\rightarrow\infty,
        \end{equation}
        where $N(0,\sigma_f^2)$ is the normal distribution with mean zero and variance $\sigma_f^2\ge 0$.
        
        Moreover, $\sigma_f>0$ provided the following two conditions are satisfied for some $z\in X$: 
        
        \begin{enumerate}
            \item[(1)] For any  $\varepsilon>0$, there exists $N\in\mathbb{N}$, such that for each $k\in \mathbb{N}$,
            \begin{equation}\label{CLT-I}
                \mathbb{P}\left(x_n\in B_X(z,\varepsilon)\text{ for any }N\le n\le N+k\right)>0.
            \end{equation}  

            \item[(2)] $f(z)\neq \langle f,\mu_*\rangle$.
        \end{enumerate}
    \end{proposition}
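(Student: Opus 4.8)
The plan is to prove the central limit theorem in two stages: first the convergence \eqref{CLT} for a general $f \in L_b(X)$, and then the strict positivity $\sigma_f > 0$ under conditions (1) and (2). For the first stage, the standard route is the martingale approximation (Gordin's method). Since the chain is exponentially mixing in the sense of \eqref{Exponential mixing}, the function $f - \langle f, \mu_* \rangle$ has zero $\mu_*$-mean, and the series $\hat{f} := \sum_{n=0}^\infty P_n(f - \langle f,\mu_*\rangle)$ converges uniformly and defines a bounded solution of the Poisson equation $\hat f - P\hat f = f - \langle f, \mu_*\rangle$. Writing $M_k := \hat f(x_k) - P\hat f(x_{k-1})$ we get that $\sum_{k=1}^n (f(x_k) - \langle f,\mu_*\rangle) = \sum_{k=1}^n M_k + \hat f(x_0) - \hat f(x_n) + (\hat f(x_0) - P\hat f(x_0))$ modulo bounded boundary terms; the $M_k$ form a stationary ergodic martingale difference sequence with respect to the natural filtration when $\mathscr{D}(x_0) = \mu_*$. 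The martingale CLT of Billingsley--Ibragimov then yields \eqref{CLT} with $\sigma_f^2 = \mathbb{E}_{\mu_*} M_1^2 = \mathbb{E}_{\mu_*}\big[(P\widehat{f^2}) - (P\hat f)^2\big]$, after checking the Lindeberg-type condition, which is automatic here since $M_k$ is bounded and stationary. The boundary terms are $O(1)$ and vanish after division by $\sqrt n$.

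For the second stage, suppose for contradiction that $\sigma_f = 0$. A well-known degeneracy criterion (see e.g.\ \cite{KS-12}) states that $\sigma_f = 0$ forces $M_1 = 0$ $\mathbb{P}_{\mu_*}$-a.s., i.e.\ $\hat f(x_1) = P\hat f(x_0)$ almost surely; combined with the Poisson equation this gives $\hat f(x) = \langle f,\mu_*\rangle \cdot \text{(something)}$ — more precisely it implies $\hat f$ is $P$-harmonic in the sense that $\hat f(x_{n+1}) = \hat f(x_n)$ a.s.\ for $\mu_*$-a.e.\ starting point, and hence by stationarity $\hat f$ is constant along almost every trajectory, equal to its expectation $\langle \hat f, \mu_* \rangle$. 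Plugging back into the Poisson equation yields $f(x) = \langle f, \mu_* \rangle$ for $\mu_*$-a.e.\ $x$. To derive a contradiction with $f(z) \neq \langle f,\mu_*\rangle$, I would use condition \eqref{CLT-I}: it guarantees that for any $\varepsilon > 0$ there is a positive-probability event on which the chain (started from $x_0 \sim \mu_*$, or after propagating $\mu_*$ forward $N$ steps, using $\supp \mu_* $ and irreducibility) spends a long block of times in $B_X(z,\varepsilon)$. Since $f$ is continuous and $f(z) \neq \langle f, \mu_*\rangle$, on this event $f(x_n)$ stays bounded away from $\langle f,\mu_*\rangle$, which is incompatible with $f = \langle f,\mu_*\rangle$ holding $\mu_*$-a.e.\ once we note $\mu_*$ gives positive mass to every neighbourhood of every point in its support and the trajectory visits such neighbourhoods with positive probability.

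The main obstacle is the rigorous passage in the second stage from ``$\sigma_f = 0$'' to a pointwise statement about $f$ near $z$. The delicate point is that $\sigma_f = 0$ only gives information $\mu_*$-almost everywhere, whereas $z$ need not lie in a region of positive $\mu_*$-density in any naive sense; one must exploit \eqref{CLT-I} to transport the a.e.\ identity along trajectories into a neighbourhood of $z$. Concretely, I expect to argue as follows: the degeneracy $\sigma_f=0$ implies $f$ is $\mu_*$-a.e.\ equal to the constant $\langle f,\mu_*\rangle$; pick $\varepsilon$ small enough that $|f - \langle f,\mu_*\rangle| \geq \tfrac12 |f(z) - \langle f,\mu_*\rangle| =: c > 0$ on $B_X(z,\varepsilon)$; by \eqref{CLT-I} with $k$ large and a Borel--Cantelli / stationarity argument, the stationary chain visits $B_X(z,\varepsilon)$ with positive frequency along a.e.\ trajectory (or at least with positive probability at some fixed time), so $\mu_*(B_X(z,\varepsilon)) > 0$; but then $f$ cannot be a.e.\ constant on $X$ and equal to $\langle f, \mu_*\rangle$ while deviating by at least $c$ on a set of positive $\mu_*$-measure — contradiction. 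Everything else (Poisson equation, martingale CLT, the variance formula) is routine given the exponential mixing \eqref{Exponential mixing}.
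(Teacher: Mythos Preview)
Your first stage (Gordin's martingale approximation) is the right route and matches the paper's treatment; the CLT itself is cited and the Poisson equation converges by exponential mixing exactly as you say.

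The second stage, however, has a real gap. From $\sigma_f=0$ you correctly obtain $M_k=0$ $\mathbb{P}_{\mu_*}$-a.s.\ for all $k$, but your next deduction is wrong. With $M_k=\hat f(x_k)-P\hat f(x_{k-1})$ and the Poisson equation $P\hat f=\hat f-(f-\langle f,\mu_*\rangle)$, the identity $M_k=0$ reads
\[
\hat f(x_k)-\hat f(x_{k-1})=-(f(x_{k-1})-\langle f,\mu_*\rangle),
\]
which is \emph{not} $\hat f(x_{k})=\hat f(x_{k-1})$. In particular one cannot conclude that $f=\langle f,\mu_*\rangle$ $\mu_*$-a.e.; the correct conclusion is only that $f-\langle f,\mu_*\rangle$ is a coboundary (take any ergodic deterministic map and $f=g-g\circ T$ for a nonconstant $g$ to see that coboundaries need not vanish a.e.). Your final contradiction, which rests on $\mu_*(B_X(z,\varepsilon))>0$ versus $f=\langle f,\mu_*\rangle$ a.e., therefore does not go through. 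Note also that your use of \eqref{CLT-I} extracts only the weak consequence $\mu_*(B_X(z,\varepsilon))>0$, which needs no $k$ at all.

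The paper's argument avoids this trap entirely. From $M_n=0$ a.s.\ for all $n$ it concludes that the full partial sum $\sum_{k=1}^n(f(x_k)-\langle f,\mu_*\rangle)$ is a.s.\ bounded in $n$ by $2\|\phi\|_\infty$, since $\phi\in L^\infty$. Condition \eqref{CLT-I} is then used at full strength: choosing $\varepsilon$ so that $f>\langle f,\mu_*\rangle+\varepsilon$ on $B_X(z,\varepsilon)$ (say), for each $k$ there is positive probability that the chain sits in $B_X(z,\varepsilon)$ throughout $[N,N+k]$, making the partial sum at time $N+k$ exceed $k\varepsilon-2N\|f\|_\infty$. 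Letting $k\to\infty$ contradicts the uniform bound. The role of the quantifier ``for each $k$'' in \eqref{CLT-I} is precisely to force the sums to be unbounded with positive probability; your argument discards this.
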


    \begin{proof}
        The central limit theorem \eqref{CLT} has been derived in \cite[Proposition 2.1]{LWXZZ24}. For the rest of the proof, let us assume that $\sigma_f=0$ and $\langle f,\mu_*\rangle=0$, and aim to show that $f(z)=0$. Consider the martingale approximation given by
        \begin{equation}\label{martingale}
            \sum_{k=1}^{n}f(x_k)=M_n+N_n,\quad M_n=\phi(x_{n})-\phi(x)+\sum_{k=1}^n f(x_k),\quad N_n=\phi(x)-\phi(x_{n}),
        \end{equation}
        where $\phi(x):=\sum_{k=1}^{\infty}P_kf(x)$. Following the arguments in \cite[Proposition 4.1.4]{KS-12}, one attains
        \begin{equation*}
            \mathbb{P}(M_n=0\text{ for any } n\in \mathbb{N})=1.
        \end{equation*}
    
        We claim that this equality implies that $f(z)=0$. Otherwise, without loss of generality, assume that $f(z)>0$. Let $\varepsilon>0$ be such that $f(x)>\varepsilon$ for any $x\in B(z,\varepsilon)$. By virtue of \eqref{CLT-I} and the Markov property, for any $n>N$,
        \begin{equation*}
            \mathbb{P}\left(\sum_{k=1}^{n}f(x_k)>(n-N)\varepsilon-N\|f\|_{\infty}\right)>0.
        \end{equation*}
        On the other hand, the exponential mixing implies that $\phi\in L^\infty$. For sufficiently large $n$, we see that \eqref{martingale} cannot hold almost surely, completing the proof. 
    \end{proof}

    \subsubsection{Asymptotics of generalized Markov semigroups}\label{sec_multiplicativeergodic}

    We quote two results from \cite{JNPS-15,MN-20} on the long-time behaviour of generalized Markov semigroups.
    
    Let $Y$ be a compact metric space. A generalized Markov kernel consists of a family
    \[\{Q(y,\cdot):y\in Y\}\subset \mathcal{M}_+(Y),\]
    such that $0<Q(y, Y)<\infty$ for any $y\in Y$, and the mapping $y\mapsto Q(y,\cdot)$ is continuous from $Y$ to $\mathcal{M}_+(Y)$. We denote by $Q_n(y,\cdot)$ the $n$-fold iteration of $Q(y,\cdot)$, i.e.~
    \[Q_n(y,\cdot):=\int_Y Q_{n-1}(y',\cdot)\, Q(y,dy').\]
    With a slight abuse of notation, let the operators
    \begin{equation*}
        Q_n\colon C(Y)\rightarrow C(Y),\quad Q_n^*\colon \mathcal{M}_+(Y)\rightarrow \mathcal{M}_+(Y)
        \end{equation*}
    be defined by
    \begin{equation*}
    Q_nf(y)=\int_{Y}f(z)\, Q_n(y,dz),\quad Q_n^*\sigma(\cdot )=\int_{Y}Q_n(y,\cdot )\, \sigma(dy),
    \end{equation*}
    where $f\in C(Y)$ and $\sigma\in\mathcal{M}_+(Y)$. 
    
    Let $A\subset Y$ be an invariant compact subset, in the sense that $Q(a,Y\setminus A)=0$ for any $a\in A$. Thus $Q_n$ and $Q_n^*$ can also be viewed as operators on $C(A)$ and $\mathcal{M}_+(A)$. The first theorem deals with the asymptotics of the restriction of $Q_n$ to $A$, and is established in \cite[Theorem 2.1]{JNPS-15}.
        
    \begin{theorem}\label{Thm Feynman-Kac}
        Under the above assumptions, let us assume the following conditions:
        \begin{itemize}
            \item[\tiny$\bullet$] {\bf Uniform irreducibility.} For any $\varepsilon>0$, there exists $N\in \mathbb{N}$ and $p>0$, such that
            \begin{equation}\label{uniformirreducible_A}
                Q_N(b,B_A(a,\varepsilon))\ge p\quad\text{for any }a,b\in A.
            \end{equation}
               
            \item[\tiny$\bullet$] {\bf Uniform Feller property.} For any $f\in L(A)$, the sequence $(\|Q_n\mathbf{1}\|_{L^\infty(A)}^{-1}Q_nf)_{n\in \mathbb{N}}$ is equicontinuous in $C(A)$.
        \end{itemize}
        Then there is a number $\lambda>0$, a measure $\mu\in\mathcal{P}(A)$ with $\supp(\mu)=A$, and a positive function $h\in C_+(A)$, such that for any $f\in C(A)$ and $\nu\in\mathcal{M}_+(A)$,
        \begin{align*}
            Qh&=\lambda h,\quad Q^*\mu=\lambda\mu,\quad \langle h,\mu\rangle =1,\\
            \lambda^{-n}Q_nf&\rightarrow\langle f,\mu\rangle h\quad\text{in }C(A)\text{ as }n\rightarrow\infty,\\
            \lambda^{-n}Q_n^*\nu&\rightarrow\langle h,\nu\rangle \mu\quad \text{in }\mathcal{M}_+(A)\text{ as }n\rightarrow\infty.
        \end{align*}
    \end{theorem}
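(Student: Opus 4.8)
The plan is a Perron--Frobenius / Krein--Rutman scheme, following the strategy of \cite{JNPS-15}: first construct the left eigenmeasure by a fixed-point argument, then extract the exponential growth rate together with a Harnack-type comparison from the two hypotheses, and finally obtain the eigenfunction and the two convergence statements from a Birkhoff--Hopf contraction in Hilbert's projective metric.

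\textit{Step 1 (eigenmeasure).} Since $A$ is invariant, $Q(a,A)=Q\mathbf{1}(a)$ for $a\in A$, and $a\mapsto Q\mathbf{1}(a)=Q(a,Y)$ is continuous and strictly positive on the compactum $A$, hence bounded away from $0$ and $\infty$. Thus $F(\nu):=Q^{*}\nu/\langle Q\mathbf{1},\nu\rangle$ is a well-defined continuous self-map of $\mathcal{P}(A)$, which is compact and convex for the weak-$*$ topology, so the Schauder--Tychonoff theorem provides a fixed point $\mu$: $Q^{*}\mu=\lambda\mu$ with $\lambda:=\langle Q\mathbf{1},\mu\rangle>0$, and iterating, $\langle Q_n\mathbf{1},\mu\rangle=\lambda^{n}$. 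Uniform irreducibility forces $\supp\mu=A$: for $B_A(a,\varepsilon)\subset U$ one has $\lambda^{N}\mu(U)\ge\int_A Q_N(b,B_A(a,\varepsilon))\,\mu(db)\ge p>0$.

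\textit{Step 2 (growth rate and Harnack inequality).} Applying the uniform Feller property to $\mathbf{1}\in L(A)$, the normalized functions $g_n:=\|Q_n\mathbf{1}\|_{L^{\infty}(A)}^{-1}Q_n\mathbf{1}$ form an equicontinuous family; pick $\varepsilon_0>0$ so that $d(y,y')<\varepsilon_0$ implies $|g_n(y)-g_n(y')|<1/2$ for all $n$, whence $g_n\ge 1/2$ on a ball of radius $\varepsilon_0$ around any maximizer of $g_n$. Inserting this into the uniform irreducibility bound with radius $\varepsilon_0$ (parameters $N_0,p_0$) gives $\inf_A Q_{n+N_0}\mathbf{1}\ge\tfrac{p_0}{2}\|Q_n\mathbf{1}\|_{L^{\infty}(A)}$, while $\|Q_{n+N_0}\mathbf{1}\|_{L^{\infty}(A)}\le\|Q_{N_0}\mathbf{1}\|_{L^{\infty}(A)}\|Q_n\mathbf{1}\|_{L^{\infty}(A)}$ trivially; hence there is $\kappa>0$ with $\inf_A Q_m\mathbf{1}\ge\kappa\|Q_m\mathbf{1}\|_{L^{\infty}(A)}$ for every $m\ge N_0$. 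Together with $\kappa\|Q_m\mathbf{1}\|_{L^{\infty}(A)}\le\inf_A Q_m\mathbf{1}\le\langle Q_m\mathbf{1},\mu\rangle=\lambda^{m}\le\|Q_m\mathbf{1}\|_{L^{\infty}(A)}$ this yields $\lambda^{m}\le\|Q_m\mathbf{1}\|_{L^{\infty}(A)}\le\lambda^{m}/\kappa$, so $h_m:=\lambda^{-m}Q_m\mathbf{1}$ satisfies $\kappa\le h_m\le\kappa^{-1}$ on $A$ and $\langle h_m,\mu\rangle=1$; since $\{h_m\}$ is also equicontinuous, it is precompact in $C(A)$ by the Arzel\`a--Ascoli theorem.

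\textit{Step 3 (eigenfunction and convergence).} The key observation is that uniform irreducibility makes $\lambda^{-N_0}Q_{N_0}$ send the positive cone of $C(A)$ into a region of finite diameter for Hilbert's projective metric: for $0\le f\in C(A)$ with $f\not\equiv0$ one has $\tfrac{p_0}{2}\|f\|_{\infty}\mathbf{1}\le Q_{N_0}f\le\|Q_{N_0}\mathbf{1}\|_{\infty}\|f\|_{\infty}\mathbf{1}$ exactly as in Step 2, so $Q_{N_0}f$ is comparable to $\mathbf{1}$ with constants independent of $f$. By Birkhoff's theorem $\lambda^{-N_0}Q_{N_0}$ is then a strict contraction for the Hilbert metric on the cone; since the $C(A)$-norms of the $h_m$ are pinned between $\kappa$ and $\kappa^{-1}$ with $\langle h_m,\mu\rangle=1$, projective convergence upgrades to convergence in $C(A)$, and treating the finitely many residues mod $N_0$ gives $h_m\to h$ in $C(A)$ for some $h\in C_+(A)$ with $\langle h,\mu\rangle=1$. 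Passing to the limit in the identity $Qh_m=\lambda h_{m+1}$ yields $Qh=\lambda h$. For $0\le f\in C(A)$ the same contraction gives $\lambda^{-n}Q_nf\to\alpha_f h$ in $C(A)$, and pairing with $\mu$ and using $Q^{*}\mu=\lambda\mu$ identifies $\alpha_f=\langle f,\mu\rangle$; writing a general $f\in C(A)$ as $(f+\|f\|_{\infty}\mathbf{1})-\|f\|_{\infty}\mathbf{1}$ extends this to all of $C(A)$, giving the claimed convergence $\lambda^{-n}Q_nf\to\langle f,\mu\rangle h$. Finally, for $\nu\in\mathcal{M}_+(A)$ and $f\in C(A)$, $\langle f,\lambda^{-n}Q_n^{*}\nu\rangle=\langle\lambda^{-n}Q_nf,\nu\rangle\to\langle f,\mu\rangle\langle h,\nu\rangle=\langle f,\langle h,\nu\rangle\mu\rangle$, i.e.\ $\lambda^{-n}Q_n^{*}\nu\to\langle h,\nu\rangle\mu$ in $\mathcal{M}_+(A)$.

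The main obstacle is Step 3: upgrading the mere precompactness of $\{h_m\}$ to actual convergence, and to the uniqueness of $\lambda,h,\mu$. This is precisely where the two hypotheses must cooperate — the uniform Feller property keeps the rescaled iterates equicontinuous inside $C(A)$, while uniform irreducibility supplies the two-sided comparison $Q_{N_0}f\asymp\langle f,\mu\rangle\mathbf{1}$ that produces a uniform projective contraction. An equivalent route, taken in \cite{KS-12,JNPS-15}, replaces Birkhoff's theorem by a direct Doeblin-type coupling for the normalized Markov operators $T_nf:=Q_nf/Q_n\mathbf{1}$.
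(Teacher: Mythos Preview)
The paper does not prove this theorem; it simply quotes it from \cite[Theorem 2.1]{JNPS-15}. So there is no in-paper proof to compare against, and I assess your argument on its own. Steps 1 and 2 are correct and standard: the Schauder fixed point yields the eigenmeasure with full support, and the combination of uniform Feller (applied to $\mathbf{1}$) with uniform irreducibility gives the two-sided bound $\kappa\le h_m\le\kappa^{-1}$ and the precompactness of $\{h_m\}$ exactly as you describe.

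The gap is in Step 3. Your claimed inequality $\tfrac{p_0}{2}\|f\|_\infty\mathbf{1}\le Q_{N_0}f$ for \emph{every} $0\le f\in C(A)$ is false, so $Q_{N_0}$ does \emph{not} send the positive cone of $C(A)$ into a set of finite Hilbert diameter, and Birkhoff's contraction theorem does not apply. The phrase ``exactly as in Step 2'' hides the issue: in Step 2 the uniform Feller property gave you a \emph{single} $\varepsilon_0$ with $g_n\ge\tfrac12$ on $B(a_n,\varepsilon_0)$ for all $n$, and then uniform irreducibility with that fixed radius produced fixed $(N_0,p_0)$. For a general $f\ge0$ you only get $f\ge\tfrac12\|f\|_\infty$ on some ball $B(a_f,\varepsilon_f)$ whose radius $\varepsilon_f$ depends on the modulus of continuity of $f$, hence the irreducibility constants depend on $f$ as well. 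Concretely, if $f_k$ is a sequence of bump functions with supports shrinking to a point, then $\inf_b Q_{N_0}f_k(b)/\|f_k\|_\infty\to0$, since uniform irreducibility bounds $Q_{N_0}(b,B(a,\varepsilon))$ from below only for each fixed $\varepsilon$, not uniformly over Borel sets. This is precisely where the two hypotheses must cooperate \emph{again}: the actual argument in \cite{JNPS-15} applies the uniform Feller property to the iterates $\lambda^{-n}Q_nf$ (for each fixed $f\in L(A)$), obtains equicontinuity and hence precompactness of \emph{these} sequences, and then identifies all subsequential limits via a maximum-principle/oscillation argument for the normalized Markov semigroup $Tg=\lambda^{-1}h^{-1}Q(hg)$ --- essentially the ``Doeblin-type'' route you allude to at the end but do not carry out.
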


    The second theorem is an extension to $Y$ and is established in \cite[Theorem 1.2]{MN-20}\footnote{It seems to us that there is a tiny inaccuracy on \eqref{A-concentration} in the original statement of \cite{MN-20}, which is corrected here. The same proof still works, hence is also omitted.}. Now the generalized Markov semigroup is only irreducible on $A$. Some extra assumptions are needed.
    
    \begin{theorem}\label{Thm Feynman-Kac_Y}
        Under the assumptions of Theorem~{\rm\ref{Thm Feynman-Kac}}, let $A\subset Y$ be an invariant compact subset. Assume the following conditions hold:
        \begin{itemize}
            \item[\tiny$\bullet$] {\bf Uniform irreducibility to $A$.} For any $\varepsilon>0$, there exists $N\in \mathbb{N}$ and $p>0$, such that
            \begin{equation}\label{uniformirreducible_Y}
                Q_N(y,B_Y(a,\varepsilon))\ge p\quad\text{for any }y\in Y\text{ and }a\in A.
            \end{equation}
               
            \item[\tiny$\bullet$] {\bf Uniform Feller property.} For any $f\in L(Y)$, the sequence $(\|Q_n\mathbf{1}\|_{L^\infty(Y)}^{-1}Q_nf)_{n\in \mathbb{N}}$ is equicontinuous in $C(Y)$.

            \item[\tiny$\bullet$] {\bf Concentration near $A$.} For any $r>0$, 
            \begin{equation}\label{A-concentration}
                \lim_{n\to \infty} \lambda^{-n} \|Q_n(\cdot , Y\setminus B_Y(A,r))\|_{L^\infty(Y)}=0.
            \end{equation}

            \item[\tiny$\bullet$] {\bf Exponential boundedness.} We have
            \begin{equation*}
                \sup_{n\in \mathbb{N}} \lambda^{-n} \|Q_n\mathbf{1}\|_{L^\infty(Y)}<\infty. 
            \end{equation*}
        \end{itemize}
        Then $h$ admits an extension in $C_+(Y)$, which is still denoted by $h$, such that for any $f\in C(Y)$ and $\nu\in\mathcal{M}_+(Y)$,
        \begin{align*}
            Qh&=\lambda h,\quad Q^*\mu=\lambda\mu,\quad \langle h,\mu\rangle =1,\\
            \lambda^{-n}Q_nf&\rightarrow\langle f,\mu\rangle h\quad\text{in }C(Y)\text{ as }n\rightarrow\infty,\\
            \lambda^{-n}Q_n\nu&\rightarrow\langle h,\nu\rangle \mu\quad\text{in }\mathcal{M}_+(Y)\text{ as }n\rightarrow\infty.
        \end{align*}
    \end{theorem}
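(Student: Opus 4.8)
The plan is to transport the Perron--Frobenius-type conclusions already available on the invariant compact set $A$ (Theorem~\ref{Thm Feynman-Kac}) to all of $Y$, using the concentration hypothesis \eqref{A-concentration} to argue that, after rescaling by $\lambda^{-n}$, the asymptotics of $Q_n(y,\cdot)$ are governed entirely by the mass that has accumulated near $A$. Write $h_n:=\lambda^{-n}Q_n\mathbf{1}$ and $\widetilde Q_m(y,\cdot):=\lambda^{-m}Q_m(y,\cdot)$, so the semigroup relation reads $h_{n+m}(y)=\int_Y h_n(z)\,\widetilde Q_m(y,dz)$, and put $\varepsilon_m(r):=\lambda^{-m}\|Q_m(\cdot,Y\setminus B_Y(A,r))\|_{L^\infty(Y)}$, which tends to $0$ as $m\to\infty$ for each fixed $r>0$ by \eqref{A-concentration}. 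Exponential boundedness gives $\sup_n\|h_n\|_{L^\infty(Y)}<\infty$, while $\|h_n\|_{L^\infty(Y)}\ge\|h_n\|_{L^\infty(A)}\to\|h\|_{L^\infty(A)}>0$, so $\lambda^{-n}\|Q_n\mathbf{1}\|_{L^\infty(Y)}$ is bounded above and away from $0$; the uniform Feller property then makes $(h_n)$, and more generally $(\lambda^{-n}Q_nf)$ for $f\in L(Y)$, equicontinuous and uniformly bounded on $Y$.

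\textbf{Convergence of $h_n$ on $Y$.} This is the heart of the matter; the plan is to show $(h_n(y))_n$ is Cauchy for each $y$. Splitting the identity for $h_{n+m}-h_{n'+m}$ over $B_Y(A,r)$ and its complement, and bounding $|h_n(z)-h_n(a)|\le\omega(r)$ for $d(z,a)<r$, $a\in A$, where $\omega(r)\to0$ as $r\to0$ is the common modulus of continuity from the equicontinuity above, yields
\[
|h_{n+m}(y)-h_{n'+m}(y)|\le\Big(\sup_k\|h_k\|_{L^\infty(Y)}\Big)\Big(2\,\omega(r)+\|h_n-h_{n'}\|_{L^\infty(A)}+2\,\varepsilon_m(r)\Big),
\]
and $\|h_n-h_{n'}\|_{L^\infty(A)}\to0$ because $h_n\to h$ uniformly on $A$ by Theorem~\ref{Thm Feynman-Kac}. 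Choosing $r$ small, then $m$ large, then $n,n'$ large makes the right-hand side as small as one wishes, so $h_n(y)\to h(y)$, and equicontinuity promotes this to uniform convergence with $h\in C(Y)$ extending $h|_A$. Passing to the limit in $Qh_n=\lambda h_{n+1}$ (legitimate since $y\mapsto Q(y,\cdot)$ is continuous and the convergence is uniform) gives $Qh=\lambda h$; uniform irreducibility to $A$ together with $\inf_A h>0$ forces $\inf_Y h>0$, so $h\in C_+(Y)$.

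\textbf{Convergence for general $f$ and $\nu$.} Running the same decomposition on $\lambda^{-n}Q_nf(y)=\int_Y\lambda^{-(n-m)}Q_{n-m}f(z)\,\widetilde Q_m(y,dz)$ — now invoking $\lambda^{-k}Q_kf\to\langle f,\mu\rangle h$ uniformly on $A$, the equicontinuity of $(\lambda^{-k}Q_kf)$, and $|\lambda^{-k}Q_kf|\le\|f\|_{L^\infty}h_k$ on the complement — shows $(\lambda^{-n}Q_nf(y))_n$ is Cauchy for $f\in L(Y)$, hence convergent; by density of $L(Y)$ in $C(Y)$ and the bound $|\lambda^{-n}Q_nf|\le\|f\|_{L^\infty}h_n$, the limit exists for all $f\in C(Y)$ and defines via $f\mapsto\langle f,\rho_y\rangle:=\lim_n\lambda^{-n}Q_nf(y)$ a measure $\rho_y\in\mathcal{M}_+(Y)$ with $\rho_y(Y)=h(y)$, while concentration forces $\supp(\rho_y)\subset A$. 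Applying $Q^*$ to $\lambda^{-n}Q_n^*\delta_y\rightharpoonup\rho_y$ gives $Q^*\rho_y=\lambda\rho_y$, so $\rho_y$ is a nonnegative $Q^*$-eigenmeasure supported on $A$ and is therefore proportional to $\mu$ by Theorem~\ref{Thm Feynman-Kac}; comparing total masses, $\rho_y=h(y)\mu$. Hence $\lambda^{-n}Q_nf\to\langle f,\mu\rangle h$ in $C(Y)$, and dualizing, for $\nu\in\mathcal{M}_+(Y)$ and $f\in C(Y)$,
\[
\big\langle f,\lambda^{-n}Q_n^*\nu\big\rangle=\big\langle\lambda^{-n}Q_nf,\nu\big\rangle\longrightarrow\langle f,\mu\rangle\langle h,\nu\rangle,
\]
i.e.\ $\lambda^{-n}Q_n^*\nu\to\langle h,\nu\rangle\mu$ in $\mathcal{M}_+(Y)$; finally $Q^*\mu=\lambda\mu$ and $\langle h,\mu\rangle=1$ are inherited from Theorem~\ref{Thm Feynman-Kac} since $\mu\in\mathcal{P}(A)$, $A$ is invariant and $h|_A$ is the original eigenfunction.

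The step I expect to be the main obstacle is the convergence of the rescaled kernel on all of $Y$: before the mass of $Q_n(y,\cdot)$ settles near $A$ there is no contraction mechanism, so one must juggle three competing scales — the radius $r$ of the neighbourhood of $A$, the burn-in time $m$ after which \eqref{A-concentration} becomes effective at radius $r$, and the time $n$ of Perron--Frobenius convergence on $A$ — and the order of quantifiers ($r$, then $m$, then $n$) is precisely what the four hypotheses are calibrated to permit. Once the rescaled kernel is known to converge, the identification of the limit and the passage to general $f$ and $\nu$ are routine.
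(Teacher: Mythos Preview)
The paper does not supply its own proof of this theorem: it is quoted from \cite[Theorem~1.2]{MN-20}, with the remark that ``the same proof still works, hence is also omitted.'' So there is no in-paper argument to compare against line by line.

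That said, your proposal is correct and follows the natural Perron--Frobenius extension strategy one would expect in the cited reference. The three-scale Cauchy argument (choose the neighbourhood radius $r$, then the burn-in time $m$ via \eqref{A-concentration}, then the Perron--Frobenius time $n$ on $A$) is exactly how the four hypotheses are designed to interact; your key inequality for $|h_{n+m}(y)-h_{n'+m}(y)|$ is clean and the order of quantifiers is right. The identification $\rho_y=h(y)\mu$ via ``$\rho_y$ is a $Q^*$-eigenmeasure supported on $A$'' is also fine: uniqueness of such eigenmeasures on $A$ is not stated explicitly in Theorem~\ref{Thm Feynman-Kac} but follows immediately from its convergence conclusion $\lambda^{-n}Q_n^*\nu\to\langle h,\nu\rangle\mu$ applied to $\nu=\rho_y$, together with $\langle h,\mu\rangle=1$. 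The positivity $\inf_Y h>0$ from uniform irreducibility to $A$, and the density-of-$L(Y)$ passage to general $f\in C(Y)$, are both routine as you indicate.
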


    \stepcounter{subsection}

    \setcounter{equation}{0}
    \setcounter{theorem}{0}
    
    \noindent\Alph{subsection}. \,\textbf{Proof of some assertions.}
    
    \subsubsection{Proof of Corollary~{\rm\ref{Cor level1LDP}}}\label{sec_prooflevel1}

    The proof is in the spirit of contraction principle; see, e.g.~\cite[Theorem 4.2.1]{DZ10}. Recall the rate function $\mathcal{I}\colon \mathcal{P}(\mathcal{X})\to [0,\infty]$ for the level-2 LDP (Theorem~\ref{Thm LDP}) is defined by \eqref{I-variantion formula}. Given $f\in L_b(\mathcal{X})$, let us define $\mathcal{I}^f\colon \mathbb{R}\to [0,\infty]$ as
    \[\mathcal{I}^f(p):=\inf \{\mathcal{I}(\sigma): \sigma\in \mathcal{P}(\mathcal{X}),\ \langle f,\sigma\rangle=p\}.\]
    If $\mathcal{I}^f(p)<\infty$, then the goodness of $\mathcal{I}$ readily implies that there exists $\sigma_p\in \mathcal{P}(\mathcal{X})$ (not necessarily unique) such that $\langle f,\sigma_p\rangle=p$ and $\mathcal{I}^f(p)=\mathcal{I}(\sigma_p)$. From this as well as the convexity of $\mathcal{I}$, it readily follows that $\mathcal{I}^f$ is a convex good rate function.
    
    Define $\Lambda^f\colon \mathbb{R}\to \mathbb{R}$ by
    \[\Lambda^f(\beta)=\Lambda_R(\beta f|_{X_R}).\]
    This is well-defined, as the right-hand side is independent of $R$ (recall $\{I<\infty\}\subset \mathcal{P}(Y)$):
    \[\Lambda_R(\beta f|_{X_R})=\sup_{\sigma\in \mathcal{P}(X_R)} (\langle \beta f,\sigma\rangle-I_R(\sigma))=\sup_{\sigma\in \mathcal{P}(Y)} (\langle \beta f,\sigma\rangle-\mathcal{I}(\sigma)).\]
    Then $\Lambda^f$ is a convex function, and hence continuous. Note that
    \begin{align*}
        \sup_{p\in \mathbb{R}} (p\beta-\mathcal{I}^f(p))&=\sup_{p\in \mathbb{R}} \{p\beta-\mathcal{I}(\sigma):\sigma\in \mathcal{P}(X),\ \langle f,\sigma\rangle=p\}\\
        &=\sup_{\sigma\in \mathcal{P}(\mathcal{X})} (\beta\langle f,\sigma\rangle-\mathcal{I}(\sigma))=\Lambda^f(\beta).\qedhere
    \end{align*}
    In other words, $\Lambda^f$ is the Legendre transform of $\mathcal{I}^f$. Thus the duality relation holds:
    \[\mathcal{I}^f(p)=\sup_{\beta\in \mathbb{R}} (p\beta-\Lambda^f(\beta)).\]
    
    With a slight abuse of terminology, we call $p\in \mathbb{R}$ an equilibrium state for $\beta\in \mathbb{R}$ if
    \[\Lambda^f (\beta)=p\beta-\mathcal{I}^f(p).\]
    In view of the convexity of $\Lambda^f$, this is equivalent to
    \[p\in [D^-\Lambda^f (\beta),D^+ \Lambda^f (\beta)].\]
    Here $D^\pm$ refers to the left/right derivative, respectively. Therefore, such equilibrium state always exists, and the uniqueness corresponds to the differentiability of $\Lambda^f$ at $\beta$.

    If $p$ is an equilibrium state for $\beta$, then
    \[\Lambda^f(\beta)=p\beta-\mathcal{I}^f(p)=\langle \beta f,\sigma_p\rangle-\mathcal{I}(\sigma_p).\]
    Thus $\sigma_p$ is an equilibrium state for $\beta f$. According to Theorem~\ref{Thm LDP}, there exists $\beta_0>0$, such that for any $\beta$ with $|\beta|\le \beta_0$, we have $\beta f|_Y\in \mathcal{V}_{L,\delta}(Y)\subset \mathcal{V}(Y)$ and hence it admits a unique equilibrium state. And in particular, such $\beta$ also admits a unique equilibrium state $p=D\Lambda^f (\beta)$. Recall that the derivative of a convex function is monotone increasing, and cannot have jump discontinuity. Therefore $D\Lambda^f$ is well-defined and continuous in $[-\beta_0,\beta_0]$. Let us define
    \[J:=[D\Lambda^f (-\beta_0),D\Lambda^f (\beta_0)].\]
    
    The argument of the following lemma can be found in \cite[Section 2]{MN-18}.
    \begin{lemma}\label{Jisnonempty}
        Under the assumptions of Theorem~{\rm\ref{Thm LDP}}, $D\Lambda^f(0)=\langle f,\mu_*\rangle$ and belongs to $J^\circ$.
    \end{lemma}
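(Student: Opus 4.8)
The plan is to obtain $D\Lambda^f(0)=\langle f,\mu_*\rangle$ by identifying the unique equilibrium state of the zero potential, and to obtain $D\Lambda^f(0)\in J^\circ$ — equivalently, by convexity of $\Lambda^f$, that $\Lambda^f$ is affine on no neighbourhood of $0$ — from the non-degeneracy $\sigma_f>0$ of the central limit variance in Proposition~\ref{prop CLT}.

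For the first assertion: since $\Lip(0)=\Osc(0)=0$, the zero function lies in $\mathcal{V}_{L,\delta}(Y)\subset\mathcal{V}(Y)$, so by Corollary~\ref{VrestricY} the constant potential $0$ belongs to $\mathcal{V}_R$ and admits a unique equilibrium state $\sigma_0$. As $\Lambda^f(0)=\Lambda_R(0)=0$ by \eqref{Lambdaconstant}, the equilibrium relation reads $0=-I(\sigma_0)$, so $I(\sigma_0)=0$, and by Remark~\ref{I(mu*)=0} ($\mu_*$ being the unique null point of $I$) we get $\sigma_0=\mu_*$. Following the discussion preceding the lemma, the unique equilibrium state for $\beta=0$ is $D\Lambda^f(0)$ and corresponds to $\sigma_0=\mu_*$, hence $D\Lambda^f(0)=\langle f,\sigma_0\rangle=\langle f,\mu_*\rangle$; also $\Lambda^f(0)=0$.

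For the inclusion in $J^\circ$: $D\Lambda^f$ is non-decreasing on $[-\beta_0,\beta_0]$, so it suffices to exclude $D\Lambda^f(\beta_0)=D\Lambda^f(0)$, the case $D\Lambda^f(-\beta_0)=D\Lambda^f(0)$ being symmetric under $\beta\mapsto-\beta$. If it held, then $D\Lambda^f\equiv\langle f,\mu_*\rangle$ on $[0,\beta_0]$, hence $\Lambda^f(\beta)=\beta\langle f,\mu_*\rangle$ there. Fix $\beta\in(0,\beta_0]$; then $\lambda_{\beta f}=e^{\Lambda^f(\beta)}=e^{\beta\langle f,\mu_*\rangle}$ (Proposition~\ref{Prop Feynman-Kac stability} and \eqref{convergenceonY}), and Proposition~\ref{Prop Feynman-Kac stability} gives the eigenfunction identity $P(e^{\beta f}h)=\lambda_{\beta f}h$ with $h:=h_{\beta f}\in C_+(Y)$; writing $\psi:=\log h$, this reads $\log P(e^{\beta f+\psi})=\beta\langle f,\mu_*\rangle+\psi$. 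Jensen's inequality against the kernels $P(x,\cdot)$ yields $\psi-P\psi\ge\beta(Pf-\langle f,\mu_*\rangle)$ pointwise; integrating against $\mu_*$ (invariance) both sides have zero mean, so equality holds on $\supp\mu_*=\mathcal{A}$, and the equality case in Jensen forces $\beta f+\psi$ to be constant along the reachable set $S(x,\mathcal{E})$ for every $x\in\mathcal{A}$. Since $\mathcal{A}$ is invariant, unique solvability up to an additive constant of the Poisson equation $\psi-P\psi=\beta(Pf-\langle f,\mu_*\rangle)$ on $\mathcal{A}$ (exponential mixing on $\mathcal{A}$) identifies $\psi|_{\mathcal{A}}=\beta\phi|_{\mathcal{A}}+\mathrm{const}$ with $\phi:=\sum_{k\ge1}(P^kf-\langle f,\mu_*\rangle)$, the potential in the martingale decomposition \eqref{martingale}; substituting back and using the eigen-equation to pin the constant value yields $f(y)+\phi(y)=\langle f,\mu_*\rangle+\phi(x)$ for all $x\in\mathcal{A}$ and $y\in S(x,\mathcal{E})$. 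This is exactly the $\mathbb{P}_{\mu_*}$-a.s.\ vanishing of the martingale increment $\phi(x_1)-\phi(x_0)+f(x_1)-\langle f,\mu_*\rangle$, so $\sigma_f^2=0$. But $\sigma_f>0$ by Proposition~\ref{prop CLT}: the chain $x_n$ is exponentially mixing (Theorem~\ref{Thm Exponential mixing}), condition \eqref{CLT-I} for $z$ follows from irreducibility \hyperlink{I}{$\mathbf{(I)}$} together with the fixed point $S(z,\zeta)=z$ (continuity of $S$ and $\zeta\in\mathcal{E}=\supp\mathscr{D}(\xi_0)$ let the trajectory stay $\varepsilon$-close to $z$ for arbitrarily many steps with positive probability), and $f(z)\neq\langle f,\mu_*\rangle$ is in force throughout the proof of Corollary~\ref{Cor level1LDP}. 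This contradiction completes the proof. An alternative to this last step: the spectral gap underlying the multiplicative ergodic theorem in Appendix~\ref{sec_multiplicativeergodic} makes $\beta\mapsto\lambda_{\beta f}$ real-analytic near $0$ by Kato perturbation theory, so $\Lambda^f\in C^2$ near $0$ with $(\Lambda^f)''(0)=\sigma_f^2>0$, which already precludes affineness.

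The step I expect to be the main obstacle is converting the degeneracy ``$\Lambda^f$ affine on $[0,\beta_0]$'' into $\sigma_f=0$: this relies on the eigenfunction equation from Proposition~\ref{Prop Feynman-Kac stability}, on the rigidity of the equality case in Jensen's inequality on $\supp\mu_*$, and on the identification of $\log h_{\beta f}$ with a scalar multiple of the Poisson solution $\phi$ appearing in the martingale approximation of Proposition~\ref{prop CLT}; the analyticity route instead requires making the spectral gap of $Q^{\beta f}$ explicit and locally uniform in $\beta$. Verifying \eqref{CLT-I} from the fixed point $S(z,\zeta)=z$ is elementary but should be spelled out.
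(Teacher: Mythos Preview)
Your proposal is correct. Both you and the paper reduce the second assertion to Proposition~\ref{prop CLT} and the standing hypothesis $f(z)\neq\langle f,\mu_*\rangle$, but the routes to $\sigma_f=0$ differ.

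The paper proves a general claim: if $V\in\mathcal{V}(Y)$ with $\Lambda(V)=\langle V,\mu_*\rangle$, then $V(z)=\langle V,\mu_*\rangle$. After centering so that $\Lambda(V)=\langle V,\mu_*\rangle=0$, one has $\lambda_V=1$, and Proposition~\ref{Prop Feynman-Kac stability} gives $\sup_n\|Q_n^V\mathbf{1}\|_{L^\infty(\mathcal{A})}<\infty$; since $\supp\mu_*=\mathcal{A}$ this yields $\mathbb{E}_{\mu_*}e^{V(x_1)+\cdots+V(x_n)}\le C$ uniformly in $n$. A bounded moment generating function is incompatible with $\sigma_V>0$ in the CLT (else $\mathbb{E}e^{S_n}\ge e^{a\sqrt{n}}\mathbb{P}(S_n>a\sqrt{n})\to\infty$), so $\sigma_V=0$, and Proposition~\ref{prop CLT} then forces $V(z)=0$. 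Applied to $V=\beta f$ this contradicts $f(z)\neq\langle f,\mu_*\rangle$.

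Your argument instead reads the eigenfunction equation $P(e^{\beta f+\psi})=e^{\beta\langle f,\mu_*\rangle+\psi}$ through Jensen, identifies the equality case on $\mathcal{A}$, and uses uniqueness of the Poisson solution there to match $\psi$ with $\beta\phi$ and exhibit the vanishing of the martingale increment directly. This is correct and more explicit about the mechanism $\sigma_f=0$, but requires more machinery (Poisson uniqueness on $\mathcal{A}$, the equality case in Jensen pointwise on $\supp\mu_*$). The paper's shortcut trades this for the single soft observation that $\lambda_V=1$ already bounds the Feynman--Kac semigroup. Your analyticity alternative is the approach of \cite{MN-18}; it is viable but would need a uniform spectral gap for $Q^{\beta f}$ that the multiplicative ergodic theorems in Appendix~\ref{sec_multiplicativeergodic} do not state explicitly.
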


    \begin{proof}
        One readily finds that $D\Lambda^f (0)=\langle f,\mu_*\rangle$ is equivalent to $\mathcal{I}(\mu_*)=0$ (see Remark~\ref{I(mu*)=0}).
        
        It remains to derive $\langle f,\mu_*\rangle\in J^\circ$. Let us claim that if $V\in \mathcal{V}(Y)$ and $\Lambda (V)=\langle V,\mu_*\rangle$, then $V(z)=\langle V,\mu_*\rangle$. Once the claim is true, we must have $D\Lambda^f (\beta_0)>D\Lambda^f (0)=\langle f,\mu_*\rangle$. Otherwise for any $\beta\in (0,\beta_0)$, the equilibrium state is equal to $0$ and $\Lambda^f(\beta)=\langle \beta f,\mu_*\rangle$, contradicting to $f(z)\not =\langle f,\mu_*\rangle$. Similarly $D\Lambda^f (-\beta_0)<D\Lambda^f (0)$. Thus $\langle f,\mu_*\rangle$ belongs to the interior of $J$.
        
        To verify our claim, replacing $V$ by $V-\langle V,\mu_*\rangle$, we can assume $\Lambda (V)=\langle V,\mu_*\rangle=0$. Since $\lambda_V=e^{\Lambda (V)}=1$, Proposition~\ref{Prop Feynman-Kac stability} implies $\|Q_n^V \mathbf{1}\|_{L^\infty (\mathcal{A})}\le C$. As a result, if the initial distribution is already stationary, i.e.~$\mathscr{D}(x)=\mu_*$, then it follows that
        \[\mathbb{E}_{\mu_*} e^{V(x_1)+\cdots +V(x_n)}\le C\quad \text{for any }n\in \mathbb{N}.\]
        Taking the central limit theorem (Proposition~\ref{prop CLT}) into account, we must have $V(z)=0$.
    \end{proof}

    We apply the contraction principle to attain the following lemma. The only difference between this and the standard contraction principle is that the lower bound is local. The proof can be easily modified, since for any $p\in G\cap J$, there exists $\sigma_p\in \mathcal{W}$ such that $\langle f, \sigma_p\rangle=p$ and $\mathcal{I}(\sigma_p)=\mathcal{I}^f(p)$. This observation also implies $\mathcal{I}^f (p)<\infty$ for $p\in J$. The proof is thus omitted.
    
    \begin{lemma}
        Under the assumptions of Theorem~{\rm\ref{Thm LDP}}, for any $f\in L_b(\mathcal{X})$, the rate function $\mathcal{I}^f$ is good. For any closed set $F\subset \mathbb{R}$ and any open set $G\subset \mathbb{R}$,
        \begin{align*}
            \limsup_{n\to \infty} \frac{1}{n}\log \mathbb{P}_x \left(\frac{1}{n}\left(f(x_1)+\cdots +f(x_n)\right)\in F\right)&\le -\inf \{\mathcal{I}^f(p):p\in F\},\\
            \liminf_{n\to \infty} \frac{1}{n}\log \mathbb{P}_x\left(\frac{1}{n}\left(f(x_1)+\cdots +f(x_n)\right)\in G\right)&\ge -\inf \{\mathcal{I}^f(p):p\in G\cap J\}.
        \end{align*}
        Moreover, the convergences are locally uniform with respect to $x\in \mathcal{X}$.
    \end{lemma}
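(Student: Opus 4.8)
The plan is to obtain this lemma from the level-2 local LDP (Theorem~\ref{Thm LDP}) by a contraction through the continuous linear functional $\Phi\colon\mathcal{P}(\mathcal{X})\to\mathbb{R}$, $\Phi(\sigma):=\langle f,\sigma\rangle$, which is weak-$*$ continuous since $f\in L_b(\mathcal{X})\subset C_b(\mathcal{X})$. The crucial identity is $\frac{1}{n}(f(x_1)+\cdots+f(x_n))=\langle f,L_{n,x}\rangle=\Phi(L_{n,x})$, so that $\{\frac{1}{n}(f(x_1)+\cdots+f(x_n))\in A\}=\{L_{n,x}\in\Phi^{-1}(A)\}$ for every Borel $A\subset\mathbb{R}$. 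First I would dispose of the goodness of $\mathcal{I}^f$: since $\mathcal{I}$ is a good rate function, the infimum in $\mathcal{I}^f(p)=\inf\{\mathcal{I}(\sigma):\Phi(\sigma)=p\}$ is attained whenever finite, which gives $\{\mathcal{I}^f\le a\}=\Phi(\{\mathcal{I}\le a\})$; the right-hand side is the continuous image of a compactum, hence compact, so $\mathcal{I}^f$ has compact level sets (in particular it is lower semicontinuous), and convexity is inherited from $\mathcal{I}$. The same attainment shows $\mathcal{I}^f(p)<\infty$ for the values of $p$ produced below.

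For the upper bound, given a closed set $F\subset\mathbb{R}$, the set $\Phi^{-1}(F)$ is closed in $\mathcal{P}(\mathcal{X})$, so \eqref{level2XR_upperbound} applied to $\Phi^{-1}(F)$ yields
\[\limsup_{n\to\infty}\frac{1}{n}\log\mathbb{P}_x\bigl(\tfrac{1}{n}(f(x_1)+\cdots+f(x_n))\in F\bigr)\le-\inf\{\mathcal{I}(\sigma):\Phi(\sigma)\in F\}=-\inf_{p\in F}\mathcal{I}^f(p),\]
the last equality being the identity $\inf\{\mathcal{I}(\sigma):\Phi(\sigma)\in F\}=\inf_{p\in F}\inf\{\mathcal{I}(\sigma):\Phi(\sigma)=p\}$. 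For local uniformity in $x$ I would rerun this with the uniform version of \eqref{level2XR_upperbound} from statement~\hyperlink{b}{(b)} of Theorem~\ref{Thm LDP} on a fixed $X_R$; since $Y$ is nonempty, every bounded subset of $\mathcal{X}$ lies in some $B_{\mathcal{X}}(Y,R)\subset X_R$, which gives the locally uniform upper bound.

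For the lower bound, given an open set $G\subset\mathbb{R}$, the set $\Phi^{-1}(G)$ is open, and \eqref{level2XR_lowerbound} gives
\[\liminf_{n\to\infty}\frac{1}{n}\log\mathbb{P}_x\bigl(\tfrac{1}{n}(f(x_1)+\cdots+f(x_n))\in G\bigr)\ge-\inf\{\mathcal{I}(\sigma):\sigma\in\Phi^{-1}(G)\cap\mathcal{W}\},\]
so it remains to prove $\inf\{\mathcal{I}(\sigma):\sigma\in\Phi^{-1}(G)\cap\mathcal{W}\}\le\inf_{p\in G\cap J}\mathcal{I}^f(p)$. I would do this by exhibiting, for each $p\in J$, a measure $\sigma_p\in\mathcal{W}$ with $\langle f,\sigma_p\rangle=p$ and $\mathcal{I}(\sigma_p)=\mathcal{I}^f(p)$: then $\sigma_p\in\Phi^{-1}(G)\cap\mathcal{W}$ whenever $p\in G\cap J$, and the bound follows. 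To build $\sigma_p$, pick $\beta\in[-\beta_0,\beta_0]$ with $D\Lambda^f(\beta)=p$ (possible since $D\Lambda^f$ is continuous and nondecreasing on $[-\beta_0,\beta_0]$ with range $J$), use statement~\hyperlink{c}{(c)} to get $\beta f|_Y\in\mathcal{V}_{L,\delta}(Y)\subset\mathcal{V}(Y)$ so that $\beta f$ admits a unique equilibrium state $\sigma_{\beta f}\in\mathcal{W}$, and set $\sigma_p:=\sigma_{\beta f}$. The relation $\Lambda^f(\beta)=\langle\beta f,\sigma_{\beta f}\rangle-\mathcal{I}(\sigma_{\beta f})$, together with the duality $\mathcal{I}^f(p)=\sup_{\gamma}(\gamma p-\Lambda^f(\gamma))$ and $\mathcal{I}(\sigma_{\beta f})\ge\mathcal{I}^f(\langle f,\sigma_{\beta f}\rangle)$, forces $\langle f,\sigma_{\beta f}\rangle$ to be an equilibrium state for $\beta$ with $\mathcal{I}(\sigma_{\beta f})=\mathcal{I}^f(\langle f,\sigma_{\beta f}\rangle)$; uniqueness of the $\beta$-equilibrium then gives $\langle f,\sigma_{\beta f}\rangle=p$. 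Local uniformity again follows from statement~\hyperlink{b}{(b)} on $X_R$, using $\mathcal{W}_R=\mathcal{W}$ from statement~\hyperlink{c}{(c)}.

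The remaining contraction bookkeeping is routine; the one step that needs care is the last one — checking that a level-1 equilibrium $p=D\Lambda^f(\beta)$ lifts to a level-2 equilibrium $\sigma_{\beta f}$ whose $f$-mean is exactly $p$. This is precisely where the non-triviality statement~\hyperlink{c}{(c)} (giving $\beta f|_Y\in\mathcal{V}(Y)$ for small $\beta$) and Lemma~\ref{Jisnonempty} (giving $\langle f,\mu_*\rangle\in J^\circ$, so that $J$ is a genuine neighbourhood of $\langle f,\mu_*\rangle$ and the local lower bound has content) enter; everything else is a direct transcription of \cite[Theorem~4.2.1]{DZ10}.
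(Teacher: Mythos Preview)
Your proposal is correct and follows essentially the same route as the paper, which omits the proof but records the key observation: for each $p\in J$ there exists $\sigma_p\in\mathcal{W}$ with $\langle f,\sigma_p\rangle=p$ and $\mathcal{I}(\sigma_p)=\mathcal{I}^f(p)$, after which the result is the standard contraction principle \cite[Theorem~4.2.1]{DZ10}. The only cosmetic difference is the direction in which you verify this observation: the paper starts from a minimizer $\sigma_p$ of $\mathcal{I}$ on $\{\langle f,\cdot\rangle=p\}$ and argues that it must coincide with the unique equilibrium state $\sigma_{\beta f}$, whereas you start from $\sigma_{\beta f}\in\mathcal{W}$ and show that $\langle f,\sigma_{\beta f}\rangle$ is the unique level-1 equilibrium for $\beta$ (hence equals $p$) with $\mathcal{I}(\sigma_{\beta f})=\mathcal{I}^f(p)$; both directions use the same ingredients and are equivalent.
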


    Now we complete the proof of the level-1 LDP.
    
    \begin{proof}[Proof of Corollary~{\rm\ref{Cor level1LDP}}]
        This follows from the lemma above and a simple observation:
        \[\inf \{\mathcal{I}^f (p):p\in B\}=\inf \{\mathcal{I}^f(p):p\in \overline{B}\}=\inf \{\mathcal{I}^f (p):p\in B^\circ\},\]
        owing to $I^f$ being finite, convex and continuous on $J$, as well as $B\subset \overline{B^\circ}$.
    \end{proof}

    \subsubsection{Proof of Proposition~{\rm\ref{Prop Feynman-Kac stability}}}\label{sec_verify}

    We only need to verify various assumptions in Theorem~\ref{Thm Feynman-Kac} and Theorem~\ref{Thm Feynman-Kac_Y}. The arguments can be found in \cite{KS-00,JNPS-15,MN-20}.
    
    \vspace{3mm}
    \noindent{\it Step 1. Uniform irreducibility.} Since \eqref{uniformirreducible_A} is contained in \eqref{uniformirreducible_Y}, and due to $Q_n^V\ge e^{n\inf_X V} P_n$, it suffices to prove that for any $\varepsilon>0$, there exist $N\in \mathbb{N}$ and $p>0$ such that
    \begin{equation}\label{UIinequality}
        P_N(y,B_Y(a,\varepsilon))\ge p\quad\text{for any }y\in Y\text{ and }a\in \mathcal{A}.
    \end{equation}
        
    \begin{proof}[Sketched proof of \eqref{UIinequality}]
    As in \cite[Proposition 5.3]{KS-00}, we combine two observations:

    \begin{enumerate}
        \item[(1)] There exists $N_1\in \mathbb{N}$, $r>0$ and $p_1>0$, such that
        \[P_{N_1}(y,B_Y(a,\varepsilon))\ge p_1\quad \text{for any }y\in B_Y(z,r).\]

        \item[(2)] There exists $N_2=N_2(r)\in \mathbb{N}$ and $p_2=p_2(r)>0$ such that 
        \[P_{N_2}(y,B_Y(z,r))\ge p_2\quad \text{for any }y\in Y.\]
    \end{enumerate}
    Then \eqref{UIinequality} can be derived from the Kolmogorov--Chapman relation. 
    
    It remains to check (1) and (2). Thanks to the assumption that $S(z,\zeta)=z$ for some $\zeta\in \mathcal E$, it is easy to see that $\mathcal{A}_n(\{z\})$ is increasing, and that $\mathcal{A}\subset B_Y(\mathcal{A}_{N_1}(\{z\}),\varepsilon/2)$ for some $N_1\in \mathbb{N}$. By uniformly continuity, there exists $r=r(\varepsilon)>0$ and $\zeta_0^a,\dots ,\zeta_{N_1-1}^a\in \mathcal E$ such that 
    \begin{equation*}
        d(S_{N_1}(y;\zeta_0,\dots,\zeta_{N_1-1}), a)<\varepsilon,
    \end{equation*}
    provided $y\in B_Y(z,r)$ and $d_{\mathcal E}(\zeta_k , \zeta_k^a)<r$ for $k=0,\dots, N_1-1$. Thus for any $y\in B_Y(z,r)$,
    \begin{equation*}
        P_{N_1}(y,B_Y(a,\varepsilon))\ge \mathbb{P}(d_{\mathcal E}(\xi_k , \zeta_k^a)<r\text{ for } k=0,\dots, N_1-1)\ge (\inf_{\zeta\in \mathcal E}\mathbb{P}(d_{\mathcal E}(\xi_0, \zeta)<r))^{N_1}.
    \end{equation*}
    Noticing that the function $\zeta\mapsto \mathbb{P}(d_{\mathcal E}(\xi_0 , \zeta)<r)$ is lower semicontinuous and strictly positive in $\mathcal E$, we obtain (1). Finally, (2) is a consequence of \eqref{irreducibility} in hypothesis \hyperlink{I}{$\color{black}\mathbf{(I)}$}.
    \end{proof}

    \vspace{3mm}
    \noindent{\it Step 2. Uniform Feller property.} We only discuss the uniform Feller property on $Y$. The similar property on $\mathcal{A}$ follows in the same manner. That is to say, $\|Q_n^V \mathbf{1}\|_\infty^{-1}Q_n^V f$ is equicontinuous for any $f\in L(Y)$. Here we use a coupling method, which is analogous to \cite[Theorem 3.1]{JNPS-15}, while the setting there is different from ours. Recalling \eqref{g-def}, denote with
    \begin{equation}\label{def_delta1}
        \delta_1:=-\limsup\limits_{k\rightarrow\infty}\frac{1}{k}\log g(q^k)\in (0,\infty].
    \end{equation}
    We state the following lemma, which directly implies the uniform Feller property.

    \begin{lemma}\label{lemma_uf}
        For any $L>0$ and $\delta\in (0,\delta_1)$, there exists a continuous function $G\colon [0,1]\to [0,\infty)$ with $G(0)=0$, such that for any $V\in \mathcal{V}_{L,\delta}$, $f\in L(Y)$, $x,x'\in Y$ with $d(x,x')\le 1$, and $n\in \mathbb{N}$,
        \begin{equation}\label{UF inequality}
            |Q^V_nf(x)-Q^V_nf(x')|\le \|f\|_{L}\|Q^V_n\mathbf{1}\|_{L^\infty(Y)}G(d(x,x')).
        \end{equation}
    \end{lemma}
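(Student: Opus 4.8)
The plan is to realise the difference $Q^V_nf(x)-Q^V_nf(x')$ along a coupling and to dominate the contribution of each ``bad'' step by the super-exponentially small probabilities supplied by \eqref{g-def}. Write $\psi_n:=\|Q^V_n\mathbf{1}\|_{L^\infty(Y)}$. Since $\mathcal{V}_{L,\delta}$ and both sides of \eqref{UF inequality} are invariant under adding a constant to $V$, I would first assume $0\le V<\delta$ on $Y$; then $\psi_0=1$, the sequence $(\psi_n)$ is non-decreasing and submultiplicative, and $\psi_n\le e^{n\delta}$. Using the measurable maps $R,R'$ from hypothesis \hyperlink{C}{$\mathbf{(C)}$}, build coupled processes $(u_k,u'_k)_{k\ge0}$ with $u_0=x$, $u'_0=x'$ such that, conditionally on $\mathcal{G}_{k-1}:=\sigma(u_j,u'_j:j\le k-1)$, the pair $(u_k,u'_k)$ is distributed as the coupling of $\bigl(P(u_{k-1},\cdot),P(u'_{k-1},\cdot)\bigr)$. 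Then each of $(u_k)$ and $(u'_k)$ is, in the filtration $(\mathcal{G}_k)$, a Markov chain with kernel $P$ issued from $x$, resp.\ $x'$, so a backward induction yields $Q^V_nf(x)-Q^V_nf(x')=\mathbb{E}\bigl[f(u_n)e^{\sum_{k=1}^nV(u_k)}-f(u'_n)e^{\sum_{k=1}^nV(u'_k)}\bigr]$. Introduce the first decoupling time $\tau:=\inf\{k\ge1:\,d(u_k,u'_k)>q\,d(u_{k-1},u'_{k-1})\}$; on $\{\tau>k\}$ one has $d(u_k,u'_k)\le q^kd(x,x')$, and by \eqref{squeezing} and the monotonicity of $g$, $\mathbb{P}(\tau=j\mid\mathcal{G}_{j-1})\le g\bigl(q^{\,j-1}d(x,x')\bigr)$ on $\{\tau\ge j\}$.

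Next I would split the expectation over $\{\tau>n\}$ and over $\{\tau=j\}$ for $1\le j\le n$. On $\{\tau>n\}$ the two trajectories stay $q^kd(x,x')$-close for all $k\le n$; decomposing $f(u_n)e^{\sum V(u_k)}-f(u'_n)e^{\sum V(u'_k)}$ as $[f(u_n)-f(u'_n)]e^{\sum V(u_k)}+f(u'_n)\bigl[e^{\sum V(u_k)}-e^{\sum V(u'_k)}\bigr]$ and using $\Lip(f)\le\|f\|_L$, $\Lip(V)<L$, $\sum_kq^k\le q/(1-q)$ and $\mathbb{E}\bigl[e^{\sum_{k=1}^nV(u_k)}\bigr]=Q^V_n\mathbf{1}(x)\le\psi_n$, this part is bounded by $C_A\,d(x,x')\,\|f\|_L\,\psi_n$ with $C_A=C_A(L,q)$. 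For fixed $j$, conditioning on $\mathcal{G}_j$ and invoking the Markov property of $(u_{j+m})_{m\ge0}$ turns the contribution of $\{\tau=j\}$ into
\begin{equation*}
\mathbb{E}\bigl[\mathbf{1}_{\tau=j}\bigl(e^{\sum_{k=1}^jV(u_k)}\,Q^V_{n-j}f(u_j)-e^{\sum_{k=1}^jV(u'_k)}\,Q^V_{n-j}f(u'_j)\bigr)\bigr].
\end{equation*}
Bounding $|Q^V_{n-j}f|\le\|f\|_\infty\psi_{n-j}\le\|f\|_L\psi_n$, comparing $e^{\sum_{k=1}^jV(u'_k)}$ with $e^{\sum_{k=1}^jV(u_k)}$ via $\Osc(V)<\delta$ and $d(u_k,u'_k)\le q^kd(x,x')$ for $k<j$, and finally conditioning on $\mathcal{G}_{j-1}$ so as to absorb $\mathbf{1}_{\tau=j}$ into $g\bigl(q^{\,j-1}d(x,x')\bigr)$ and using $\mathbb{E}\bigl[e^{\sum_{k=1}^{j-1}V(u_k)}\bigr]=Q^V_{j-1}\mathbf{1}(x)\le\psi_{j-1}$, I would reach an estimate of the form $C_B\,\|f\|_L\,\psi_n\,\psi_{j-1}\,g\bigl(q^{\,j-1}d(x,x')\bigr)$ with $C_B=C_B(L,q,\delta)$.

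It then remains to sum over $j$. Using $\psi_{j-1}\le e^{(j-1)\delta}$ and, for any $\delta'\in(\delta,\delta_1)$, $g(q^{k})\le Ce^{-\delta'k}$ for all $k$ (the definition \eqref{def_delta1} of $\delta_1$), the series $\sum_{j\ge1}\psi_{j-1}\,g\bigl(q^{\,j-1}d(x,x')\bigr)$ is dominated by the convergent series $\sum_{j\ge1}Ce^{(j-1)(\delta-\delta')}$, uniformly over $V\in\mathcal{V}_{L,\delta}$, hence by dominated convergence tends to $0$ as $d(x,x')\to0$. Collecting the two parts gives \eqref{UF inequality} with
\begin{equation*}
G(s):=C_A\,s+C_B\sum_{j=1}^\infty e^{(j-1)\delta}\,g\!\left(q^{\,j-1}s\right),
\end{equation*}
which is finite, continuous on $[0,1]$ (the series converges uniformly by the Weierstrass test) and vanishes at $s=0$ since $g(0)=0$, and depends only on $L$, $q$, $\delta$ and $g$.

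The hard part — and the only place the hypothesis $\delta<\delta_1$ is genuinely needed — is this summation: one has to control the Feynman--Kac mass accumulated after the decoupling time in terms of $\psi_n$ alone. This works precisely because the (essentially geometric, with rate $\delta_1$) decay of the decoupling probabilities $g(q^{j})$ beats the at-most-$e^{j\delta}$ growth of $\|Q^V_j\mathbf{1}\|_{L^\infty(Y)}$ permitted by $\Osc(V)<\delta$, while the monotonicity $\psi_{n-j}\le\psi_n$ allows the factor $\psi_n$ to be pulled out of the whole series; everything else is routine bookkeeping with the contraction $d(u_k,u'_k)\le q^kd(x,x')$ and the Lipschitz bounds on $f$ and $V$. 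The argument follows the pattern of \cite[Theorem~3.1]{JNPS-15}, adapted to the present unbounded-potential-free but ``weak-contraction'' setting encoded by \hyperlink{C}{$\mathbf{(C)}$}.
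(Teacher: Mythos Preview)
Your proposal is correct and follows essentially the same route as the paper: build the coupled chain from hypothesis \hyperlink{C}{$\mathbf{(C)}$}, decompose according to the first decoupling time $\tau$ (the paper's events $C_k$ are exactly your $\{\tau=k+1\}$, and $B_n$ is your $\{\tau>n\}$), use the Markov property to pull out $Q^V_{n-j}f$, and sum the series $\sum_j e^{j\delta}g(q^{j-1}s)$ via the definition of $\delta_1$. Your preliminary normalization $0\le V<\delta$ is a harmless cosmetic device that makes the monotonicity $\psi_{n-j}\le\psi_n$ immediate; the paper instead uses the equivalent pointwise inequality $Q^V_{n-k-1}\mathbf{1}\le e^{-(k+1)\inf V}Q^V_n\mathbf{1}$ together with the crude bound $e^{\sum_{i\le k+1}V(x_i)}\le e^{(k+1)\sup V}$, arriving at the factor $e^{(k+1)\Osc(V)}$ in place of your $\psi_{j-1}\le e^{(j-1)\delta}$. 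The resulting functions $G$ differ only by bounded multiplicative constants.
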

    
    \begin{proof}[Proof]
    Recall \eqref{squeezing} gives rise to a coupling operator on $Y\times Y$:
    \begin{equation*}
        \mathcal{R}(x,x'):=  (R(x,x'),R'(x,x'))\quad\text{for }x,x'\in Y.
    \end{equation*}
    Let $(\Omega_n,\mathcal{F}_n,\mathbb{P}_n)$ be a sequence of copies of the probability space on which $\mathcal{R}$ is defined, and $(\boldsymbol{\Omega},\boldsymbol{\mathcal{F}},\mathbb{P})$ the product probability space of this sequence. For any $x,x'\in X$ and $\boldsymbol{\omega}\in\boldsymbol{\Omega}$, we recursively define $(x_n,x_n')$ by 
    \begin{align*}
        (x_n(\boldsymbol{\omega}),x_n'(\boldsymbol{\omega}))&=\mathcal{R}^{\omega_n}(x_{n-1},x_{n-1}')\quad\text{for } n\in \mathbb{N},
    \end{align*}
    where $(x_0,x_0'):=(x,x')$.
    In particular, the laws of $x_n$ and $x'_n$ coincide with $P_n(x,\cdot)$ and  $P_n(x',\cdot)$, respectively. Let us denote by  $\mathbb{P}_{\vec{\boldsymbol{x}}}$ the law of the process issuing from $\vec{\boldsymbol{x}}=(x,x')$, and by $\mathbb{E}_{\vec{\boldsymbol{x}}}$ the corresponding expectation.

    For any $k\in \mathbb{N}_0$, we introduce the following events 
    \begin{align*}
        A_k=\{d(x_{k+1},x_{k+1}')\le qd(x_k,x_k')\},\quad B_k=\bigcap_{j=0}^k A_j,\quad C_k=\bigcap_{j=0}^{k-1} A_j \cap A_k^C.        
    \end{align*}
    Then for any $n\in \mathbb{N}_0$, we can decompose $\boldsymbol{\Omega}$ as the disjoint union
    \begin{equation*}
        \boldsymbol{\Omega}=\bigcup_{k=0}^{n-1} C_k \cup B_n.
    \end{equation*}
    In view of this partition, we now rewrite the left-hand side of \eqref{UF inequality} as follows.  
    \begin{align}
        &Q^V_nf(x)-Q^V_nf(x')=\mathbb{E}_{\vec{\boldsymbol{x}}}\left(f(x_n)e^{V(x_1)+\cdots+V(x_n)}-f(x'_n)e^{V(x'_1)+\cdots+V(x'_n)}\right)\notag\\
        =&\sum_{k=0}^{n-1} \mathbb{E}_{\vec{\boldsymbol{x}}}( \mathbf{1}_{C_k}(F_n(x)-F_n(x')))+\mathbb{E}_{\vec{\boldsymbol{x}}}( \mathbf{1}_{B_n}(F_n(x)-F_n(x')))=:\sum_{k=0}^{n-1} J_n^{(k)}+J_n',\label{UF inequality1}
    \end{align}
    where we adopt the notation $F_n(x):=f(x_n)e^{V(x_1)+\cdots+V(x_n)}$. Observe that:
   
    \begin{enumerate}
        \item[(1)] Using the Markov property, $C_k\in \mathcal{F}_{k+1}$ and $Q_{n-k-1}^V \mathbf{1}(x)\le e^{(k+1)\inf_X V}Q_n^V \mathbf{1}(x)$, we get
        \begin{align*}
            \mathbb{E}_{\vec{\boldsymbol{x}}} (\mathbf{1}_{C_k}F_n(x))&\le \|f\|_\infty \|Q_{n-k-1}^V \mathbf{1}\|_{L^\infty(Y)}\mathbb{E}_{\vec{\boldsymbol{x}}} (\mathbf{1}_{C_k}e^{V(x_1)+\cdots+V(x_{k+1})})\\
            &\le \|f\|_{\infty}\|Q_n^V\mathbf{1}\|_{L^\infty(Y)}e^{(k+1)\Osc(V)}\mathbb{P}_{\vec{\boldsymbol{x}}}(C_k).
        \end{align*}
        Here we tacitly used (as $\vec{\boldsymbol{x}}_n$ is a coupling of $P_n(x,\cdot)$ and $P_n(x',\cdot)$)
        \[\mathbb{E}_{\vec{\boldsymbol{x}}_{k+1}} e^{V(x_{k+2})+\cdots +V(x_n)}=\mathbb{E}_{x_{k+1}} e^{V(x_{k+2})+\cdots +V(x_n)}=Q_{n-k-1}^V \mathbf{1}(x_{k+1}).\]
        It can be seen, thanks to \eqref{squeezing}, that $\mathbb{P}_{\vec{\boldsymbol{x}}}(C_k)\le g(q^kd(x,x'))$. Hence for $k=0,\dots ,n-1$,
        \[|J_n^{(k)}|\le 2\|f\|_\infty \|Q_n^V \mathbf{1}\|_{L^\infty (Y)} e^{(k+1)\Osc(V)} g(q^kd(x,x')).\]

        \item[(2)] Rewrite $J_n'$ as $J_n'=J_{n,1}'+J_{n,2}'$, where
        \begin{align*}
            J_{n,1}'&=\mathbb{E}_{\vec{\boldsymbol{x}}}\left(\mathbf{1}_{B_n}(f(x_n)-f(x'_n))e^{V(x_1)+\cdots+V(x_n)}\right)\\    J_{n,2}'&=\mathbb{E}_{\vec{\boldsymbol{x}}}\left(\mathbf{1}_{B_n}f(x'_n)\left(e^{V(x_1)+\cdots+V(x_n)}-e^{V(x'_1)+\cdots+V(x'_n)}\right)\right).
        \end{align*} 
        Noticing that $d(x_k,x_k')\le q^kd(x,x')$ on $B_n$, we obtain 
        \[|J_{n,1}'|\le \Lip(f)  \|Q_n^V\mathbf{1}\|_{L^\infty(Y)}q^nd(x,x')\le  \Lip(f)\|Q_n^V\mathbf{1}\|_{L^\infty(Y)}d(x,x').\]
        Moreover, on $B_n$ we have
        \[\sum_{k=1}^n |V(x_k)-V(x_k')|\le \sum_{k=1}^n \Lip(V) q^k d(x,x')\le (1-q)^{-1} \Lip (V)d(x,x').\]
        Therefore
        \begin{align*}
            |J_{n,2}'|&=\left|\mathbb{E}_{\vec{\boldsymbol{x}}}\mathbf{1}_{B_n}f(x'_n)e^{V(x'_1)+\cdots+V(x'_n)}\left(e^{\sum_{k=1}^n (V(x_k)-V(x_k'))}-1\right)\right|\\
            &\le \|f\|_{\infty}\|Q_n^V\mathbf{1}\|_{L^\infty(Y)}\left(e^{(1-q)^{-1}\Lip(V)d(x,x')}-1\right).
        \end{align*}
    \end{enumerate}
    
    Plugging (1) and (2) into \eqref{UF inequality1}, we find that \eqref{UF inequality} holds with
    \begin{equation*}
        G(s):=2\sum_{k=0}^{\infty}e^{(k+1)\delta} g(q^ks)+s+e^{(1-q)^{-1}Ls}-1,\quad 0\le s\le 1.
    \end{equation*}
    Clearly $G(0)=0$. And $G$ converges uniformly in $[0,1]$ by \eqref{def_delta1} and $\delta<\delta_1$.
    \end{proof}
    
    Noting that so far we have verified the assumptions in Theorem~\ref{Thm Feynman-Kac}.

    \vspace{3mm}
    \noindent{\it Step 3. Concentration near $\mathcal{A}$.} This is a consequence of exponential mixing \eqref{Exponential mixing}, which can be found in \cite[Lemma 3.2]{MN-20}. As mixing takes place in $X=X_R$, we can prove a stronger result:
    \begin{equation}
        \lambda_V^{-n} \lim_{n\to \infty} \sup_{x\in X} Q_n^V(x,X\setminus B_X(\mathcal{A},r))=0,\label{concentration_estimate}
    \end{equation}
    provided $\Osc(V)<\gamma$, where $\gamma$ is the mixing rate. 

    \begin{proof}[Proof of \eqref{concentration_estimate}]
    For any $r\in (0,1)$, consider 
    \[h(x)=\frac{1}{r}\min\{\dist_X(x,\mathcal{A}),r\}\in L_b(X).\] Then $\|h\|_L=1+1/r$. In view of \eqref{Exponential mixing}, $\lambda_V\ge e^{\inf_X V}$ and $\langle h,\mu_*\rangle=0$, we find
    \begin{align}
        \sup_{x\in X} \lambda_V^{-n} Q_n^V(x,X\setminus B_X(\mathcal{A},r))&\le e^{n\Osc(V)}P_n(x,X\setminus B_X(\mathcal{A},r))\le e^{n\Osc(V)} \langle h,P_n^*\delta_x\rangle\notag\\
        &\le e^{n\Osc(V)} Ce^{-\gamma n}\|h\|_L=C(r)e^{-(\gamma-\Osc(V))n}.\label{ACverify}
    \end{align}
    If $\Osc(V)<\gamma$, then \eqref{concentration_estimate} follows immediately from this estimate.
    \end{proof}

    \vspace{3mm}
    \noindent{\it Step 4. Exponential boundedness.} Theorem~\ref{Thm Feynman-Kac} already yields
    \[M_1:=\sup_{n\in \mathbb{N}} \lambda_V^{-n}\|Q_n^V \mathbf{1}\|_{L^\infty(\mathcal{A})}<\infty.\]
    It remains to address
    \begin{equation}
        M:=\sup_{n\in \mathbb{N}} \lambda_V^{-n}\|Q_n^V \mathbf{1}\|_{L^\infty(Y)}<\infty,\label{eb_Y}
    \end{equation}
    provided $V\in \mathcal{V}_{L,\delta}$ with $\delta$ sufficiently small.  
    
    \begin{proof}[Proof of \eqref{eb_Y}]
    This proof is from \cite[Lemma 3.3]{MN-20}. Fix any $\delta_2<\delta_1$. Let us assume $\delta\le \delta_2$. By uniform Feller property \eqref{UF inequality}, there exists $r=r(L,\delta_2)>0$ such that
    \begin{equation}
        \|Q_n^V \mathbf{1}\|_{L^\infty(B_{Y}(\mathcal{A},r))}\le \|Q_n^V \mathbf{1}\|_{L^\infty(\mathcal{A})}+\frac{1}{4}\|Q_n^V \mathbf{1}\|_{L^\infty(Y)}.\label{EBverieq1}
    \end{equation}
    In view of the derivation of \eqref{ACverify}, for any $y\in Y$,
    \begin{equation}
        P_n(y,Y\setminus B_{Y}(\mathcal{A},r))\le C(r)e^{-\gamma n}.\label{EBverieq2}
    \end{equation}
    Let $\tau$ be the first hitting time of $B_{Y}(\mathcal{A},r)$ from $y\in Y$. By \eqref{EBverieq2}, if $\delta=\delta(r)$ is small enough, then $\mathbb{E}e^{\delta \tau}\le 2$. We claim that this $\delta$ is all we need, up to a harmless assumption that $\delta<\gamma$.
    
    In fact, by the strong Markov property, \eqref{EBverieq1} and \eqref{EBverieq2}, we have
    \begin{align*}
        |Q_n^V \mathbf{1}(y)|&\le e^{n\sup_X V}\mathbb{P}(\tau>n)+\sum_{k=0}^n \mathbb{E} (e^{V(y_1)+\cdots+V(y_n)}|\tau=k)\\
        &\le e^{n\sup_X V} P_n(y,Y\setminus B_{Y}(\mathcal{A},r))+\sum_{k=0}^n e^{k\sup_X V} \mathbb{P}(\tau=k) \|Q_{n-k}^V \mathbf{1}\|_{L^\infty(B_{Y}(\mathcal{A},r))}\\
        &\le C(r) e^{-n(\gamma-\sup_X V)}+\left(M_1 \lambda_V^n+\frac{1}{4}\|Q_n^V \mathbf{1}\|_{L^\infty(Y)}\right)\mathbb{E} e^{\Osc(V) \tau}\\
        &=(C(r)+2M_1) \lambda_V^n+\frac{1}{2}\|Q_n^V \mathbf{1}\|_{L^\infty(Y)}.
    \end{align*}
    Taking supremum on the left-hand side over all $y\in Y$, we obtain
    \[\|Q_n^V \mathbf{1}\|_{L^\infty(Y)}\le (C(r)+2M_1)\lambda_V^n+\frac{1}{2}\|Q_n^V \mathbf{1}\|_{L^\infty(Y)}.\]
    As a result, we find $M\le 2(C(r)+2M_1)<\infty$.
    \end{proof}

     This concludes the proof of Proposition~\ref{Prop Feynman-Kac stability}.

\vspace{4mm}

\noindent\textbf{Acknowledgments.}
The authors would like to thank Jia-Cheng Zhao for valuable discussions. 
Shengquan Xiang is partially  supported by NSFC 12301562 and by “The Fundamental Research Funds for the Central Universities, 7100604200, Peking University”. Zhifei Zhang is  partially supported by  NSFC 12288101. 

    \normalem
    \bibliographystyle{alpha}
    \bibliography{main}
    
\end{document}